\newtheorem{thm}{Theorem}[section]
\newtheorem{prop}[thm]{Proposition}
\newtheorem{lem}[thm]{Lemma}
\newtheorem*{thm*}{Theorem}
\newtheorem*{cor*}{Corollary}
\newtheorem{quest}[thm]{Question}
\newtheorem*{thmA}{Theorem A}
\newtheorem*{thmB}{Theorem B}
\newtheorem*{thmC}{Theorem C}
\theoremstyle{definition}
\newtheorem{dfn}[thm]{Definition}
\theoremstyle{remark}
\newtheorem{remark}[thm]{Remark}
\newtheorem{example}[thm]{Example}
\numberwithin{equation}{section}
\newcommand{\edim}{\mathrm{emb.dim}}
\newcommand{\affspan}{\mathrm{aff.span}}
\newcommand{\pick}{\mathrm{Pick}}
\newcommand{\Gr}{\mathrm{Gr}}
\newcommand{\GL}{\mathrm{GL}}
\renewcommand{\Re}{\mathrm{Re}}
\newcommand{\Aut}{\mathrm{Aut}}
\newcommand{\Span}{\mathrm{Span}}
\newcommand{\cE}{{\mathcal E}}
\newcommand{\cF}{{\mathcal F}}
\newcommand{\cH}{{\mathcal H}}
\newcommand{\cK}{{\mathcal K}}
\newcommand{\cL}{{\mathcal L}}
\newcommand{\cM}{{\mathcal M}}
\newcommand{\cN}{{\mathcal N}}
\newcommand{\cR}{{\mathcal R}}
\newcommand{\fM}{{\mathfrak M}}
\newcommand{\fU}{{\mathfrak U}}
\newcommand{\B}{{\mathbb B}}
\newcommand{\C}{{\mathbb C}}
\newcommand{\bP}{\mathbb{P}}
\newcommand{\N}{{\mathbb N}}
\newcommand{\D}{{\mathbb D}}
\newcommand{\bX}{{\mathbb X}}
\newcommand{\bH}{\mathbb{H}}
\begin{document}

\title{Deformations of complete Pick spaces}

\author{Prahllad Deb}
\address{Department of Mathematics, Indraprastha Institute of Information Technology Delhi}
%\curraddr{Department of Mathematics, IIIT - Delhi}
%\email{prahllad@iiitd.ac.in}

\author{Jonathan Nureliyan}
\address{Department of Mathematics, Ben-Gurion University of the Negev}
%\email{nuriejoh@post.bgu.ac.il}

\author{Eli Shamovich}
\address{Department of Mathematics, Ben-Gurion University of the Negev}
%\email{shamovic@bgu.ac.il}

\thanks{PD was partially supported by the Post-doctoral fellowship at Ben-Gurion University of the Negev. ES was partially supported by BSF Grant number 2022235.}

\begin{abstract}
Motivated by the work of Pandey, Ofek, and Shalit on the one hand and deformation theory on the other, we study the Grassmannian of $n$-dimensional multiplier-coinvariant subspaces of the Drury-Arveson space. We show that this space admits a natural map to the symmetrized polyball that induces an isomorphism between the configuration space of $n$ points in the ball and the subspace of projection onto spaces spanned by $n$ distinct kernels. We discuss the tautological bundle on our Grassmannian and the corresponding operator algebra bundle. We construct examples of bundles of complete Pick spaces from homogeneous hypersurfaces in $\B_d$. Along with these bundles, we construct examples of Cowen-Doulas tuples of operators from the compressed Arveson $d$-shift.
\end{abstract}

\maketitle

\section{Introduction}

Let $\D \subset \C$ be the open unit disc. The classical Pick interpolation problem asks to find an analytic function $f \colon \D \to \overline{\D}$ that satisfies $f(z_i) = w_i$, for some $z_1,\ldots,z_n \in \D$ and $w_1,\ldots,w_n \in \C$ that were initially prescribed. This problem admits an elegant solution due to Pick and Nevanlinna. Namely, such a function exists if and only if the Pick matrix $\left(\frac{1 - w_i \overline{w_j}}{1 - z_i \overline{z_j}}\right)_{i,j=1}^n$ is positive. Moreover, the problem for matrix-valued functions admits essentially the same solution. This fact can be encoded in the statement that the reproducing kernel Hilbert space $H^2(\D)$ of all analytic functions on $\D$ with square-summable Taylor coefficients at the origin is a complete Pick space. In this paper, we assume that the reader has some familiarity with reproducing kernel Hilbert spaces (RKHS for short) and the complete Pick property. We give a brief description of the complete Pick property in Subsection \ref{subsec:DA}. One of the standard references on these topics is the excellent book \cite{AglerMcCarthy-book}.

The complete Pick property is a powerful geometric property. The strength of the complete Pick property manifests itself in several ways. Suppose $\cH$ is an RKHS on a set $X$ with the complete Pick property. In that case, a celebrated theorem of Agler and McCarthy  \cite{AglerMcCarthy-completeNP} guarantees that there exists a $d \in \N \cup \{\infty\}$ and an embedding $b \colon X \to \B_d$, where $\B_d$ is the unit ball of $\C^d$ (we use the convention that $\C^{\infty}$ is $\ell^2$). Moreover, $\cH$ is isomorphic as an RKHS to a space of analytic functions on a variety in $\B_d$. The family of universal complete Pick spaces of analytic functions on $\B_d$ is the family $H^2_d$ where $H^2_d$ is the Drury-Arveson space on $d$ variables. The isomorphism of $\cH$ with a subspace of the Drury-Arveson space induces a completely isometric isomorphism of the corresponding multiplier algebras.  This observation led Davidson, Ramsey, and Shalit \cite{DRS1, DRS2} to study isomorphisms of multiplier algebras of subvarieties of $\B_d$ cut out by WOT closed ideals of multiplier on $H^2_d$. One of their main results is that two such algebras are completely isometrically isomorphic if and only if there exists an automorphism of $\B_d$ that maps one variety onto the other. A similar result for finite-dimensional Pick spaces was obtained by Rochberg \cite{Rochberg-hyp_geom}.

The importance of the complete Pick property is not restricted to the classical function spaces. Recently, the complete Pick property was extended to the noncommutative setting \cite{AglerMcCarthy-free_pick,AriasPopescu-interpolation, BMV-ncrkhs, BMV-int,DavidsonPitts-NPint}. In particular, the full Fock space is a complete Pick nc-RKHS. In many ways, the full Fock space is a better analog of $H^2(\D)$ than $H^2_d$. Moreover, $H^2_d$ can be identified with the space of symmetric tensors in the full Fock space. This fact is utilized in this paper, and we discuss the full Fock space in Subsection \ref{subsec:Blaschke}.

The inspiration for this paper came from the work of Ofek, Pandey, and Shalit \cite{OPS}. In \cite{OPS}, the authors construct the space of $\pick_n$ isomorphism classes of complete Pick spaces on $n$ points, where $n \in \N$ is fixed. They equip the space with a Banach-Mazur type distance and other distances that induce the same topology on the space. Our approach to this, however, is influenced by algebraic geometry. By the aforementioned theorem of Agler and McCarthy, we know that if $\cH$ is a complete Pick space on $X$, a set of cardinality $n$, then there exists $d \leq n-1$, such that $X$ embeds into $\B_d$, where $\B_d$ is the unit ball of $\C^d$, and $\cH$ is isomorphic to a multiplier-coinvariant subspace of the Drury-Arveson space $H^2_d$. Therefore, we consider the Grassmannian $\bP_{d,n}$ of $n$-dimensional multiplier-coinvariant subspaces of $H^2_d$. On $\bP_{d,n}$, the strong and norm topologies coincide, so we consider this space equipped with the metric induced by the norm. There is a natural action of the group of holomorphic automorphisms of $\B_d$ on $\bP_{d,n}$. The first main result of the paper is:
\begin{thmA}[Theorems \ref{thm:proj_continuous} and \ref{thm:inverse_map}]
There is a natural continuous map $\Psi_{d,n} \colon \bP_{d,n} \to \bX_{d,n}$. Here $\bX_{d,n}$ is the symmetrized polyball equipped with the symmetric distance (see Subsection \ref{subsec:ball} for details). Moreover, $\Psi_{d,n}$ restricts to a homeomorphism from $\bP_{d,n}^0$, the subspace of projections in $B(H^2_d)$ onto subspaces spanned by $n$ distinct kernels to the configuration space of $n$ unordered points in $\B_d$ that we denote by $\bX_{d,n}^0$.
\end{thmA}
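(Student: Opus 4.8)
The plan is to take the global continuity of $\Psi_{d,n}$ as provided by Theorem \ref{thm:proj_continuous} and to establish the ``moreover'' clause by producing an explicit two-sided inverse on the open locus $\bP_{d,n}^0$ and checking that it is continuous. Write $k_\lambda$ for the reproducing kernel of $H^2_d$ at $\lambda \in \B_d$, and for $M \in \bP_{d,n}$ write $T^M = (T^M_1,\ldots,T^M_d)$ for the compressed shift $T^M_j = P_M M_{z_j}|_M$ on $M \cong \C^n$; recall that $\Psi_{d,n}(M)$ is the point of the symmetrized polyball $\bX_{d,n}$ represented by the unordered tuple of joint eigenvalues of $T^M$, counted with multiplicity.

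First I would pin down what the elements of $\bP_{d,n}^0$ look like. By definition $M \in \bP_{d,n}^0$ means $M = \Span\{k_{\lambda_1},\ldots,k_{\lambda_n}\}$ for some $\lambda_1,\ldots,\lambda_n \in \B_d$; since $H^2_d$ has the complete Pick property, any finitely many distinct points can be freely interpolated, so the kernels $k_{\lambda_i}$ are linearly independent (hence $\dim M = n$) and $\{\lambda : k_\lambda \in M\}$ is exactly $\{\lambda_1,\ldots,\lambda_n\}$ --- in particular the $\lambda_i$ are intrinsic to $M$ and pairwise distinct. Because $M_{z_j}^* k_\lambda = \overline{\lambda_j}\,k_\lambda$, each $k_{\lambda_i}$ is a joint eigenvector of the adjoint compressed shift with joint eigenvalue $\overline{\lambda_i}$, and as these eigenvectors span $M$ the joint spectrum of $T^M$ is $\{\lambda_1,\ldots,\lambda_n\}$, all multiplicities one; thus $\Psi_{d,n}(M)$ is the unordered tuple $\{\lambda_1,\ldots,\lambda_n\}$, which lies in the configuration space $\bX_{d,n}^0$ exactly because the points are distinct. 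Conversely I would set $\Phi_{d,n} \colon \bX_{d,n}^0 \to \bP_{d,n}^0$, $\{\lambda_1,\ldots,\lambda_n\} \mapsto \Span\{k_{\lambda_1},\ldots,k_{\lambda_n}\}$; this is well defined since the span depends only on the unordered set, and the two computations above show $\Phi_{d,n}$ and $\Psi_{d,n}|_{\bP_{d,n}^0}$ are mutually inverse, so $\Psi_{d,n}$ carries $\bP_{d,n}^0$ bijectively onto $\bX_{d,n}^0$.

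It remains to check continuity of both maps. The restriction $\Psi_{d,n}|_{\bP_{d,n}^0}$ is continuous, being the restriction of the continuous map of Theorem \ref{thm:proj_continuous} to a subspace whose image lies in $\bX_{d,n}^0$, which carries the subspace topology from $(\bX_{d,n},\text{symmetric distance})$. For $\Phi_{d,n}$ I would work on the ordered model: on the complement $\B_d^n \setminus \Delta$ of the big diagonal, send $(\lambda_1,\ldots,\lambda_n)$ to the bounded operator $K \colon \C^n \to H^2_d$ with $K e_i = k_{\lambda_i}$; its Gram matrix $K^*K = \big(\langle k_{\lambda_i},k_{\lambda_j}\rangle\big)_{i,j}$ is positive definite (again by the Pick property, since the points are distinct), so the orthogonal projection onto $\Span\{k_{\lambda_1},\ldots,k_{\lambda_n}\}=\im K$ equals $K(K^*K)^{-1}K^*$. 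Since $\lambda \mapsto k_\lambda$ is norm-continuous from $\B_d$ into $H^2_d$ and matrix inversion is continuous on $\GL_n(\C)$, this assignment is norm-continuous on $\B_d^n \setminus \Delta$; it is $S_n$-invariant and $\B_d^n\setminus\Delta \to \bX_{d,n}^0$ is an open quotient map, so it descends to a continuous map $\bX_{d,n}^0 \to \bP_{d,n}^0$, namely $\Phi_{d,n}$. A mutually inverse pair of continuous bijections is a homeomorphism, which finishes the argument.

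The step I expect to demand the most care is the bookkeeping between the ordered space $\B_d^n\setminus\Delta$ and the symmetrized target: one must confirm that the symmetric distance realizes $\bX_{d,n}^0$ as the $S_n$-quotient of $\B_d^n\setminus\Delta$ --- so that $S_n$-invariant continuous maps descend and continuity upstairs pulls back --- and that the joint-eigenvalue description of $\Psi_{d,n}$ from Theorem \ref{thm:proj_continuous} really does specialize on $\bP_{d,n}^0$ to the kernel-support description used above. Granting those identifications, the remaining ingredients (linear independence of kernels at distinct points, and continuity of the Gram-matrix projection formula) are routine.
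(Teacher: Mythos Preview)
Your argument is correct, but the route you take to continuity of the inverse map differs substantially from the paper's. You build $\Phi_{d,n}$ directly from the Gram projection formula $K(K^*K)^{-1}K^*$, using norm-continuity of $\lambda\mapsto k_\lambda$, continuity of inversion on $\GL_n(\C)$, and descent through the finite quotient $\B_d^{n,0}\to\bX_{d,n}^0$. The paper instead realizes $\Phi_{d,n}([X])=I-M_{b_X}M_{b_X}^*$ via the explicit Blaschke-type row multipliers $b_X$ constructed in Subsection~\ref{subsec:Blaschke} (through the full Fock space), proves in Proposition~\ref{prop:Blaschke_choice} that for a convergent sequence $[X_m]\to[X]$ one can choose $b_{X_m}\to b_X$ uniformly on compacta (which requires the auxiliary Lemma~\ref{lem:not_far_unitary} on perturbing unitaries), deduces SOT convergence of the associated projections, and finally invokes the Dixmier--Douady fact that the SOT and norm topologies agree on $\bP_{d,n}$. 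Your approach is considerably more elementary and bypasses the Fock space and multiplier machinery entirely; what the paper's approach buys is an explicit McCullough--Trent partial isometric multiplier onto $H^2_d\ominus\cH_X$, which has independent interest and, as Proposition~\ref{prop:hardy_space} shows in the case $d=1$, extends naturally to points with multiplicity where your Gram formula would degenerate.
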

As seen in Section \ref{sec:grassmannian}, the space $\bP_{d,n}$ behaves like a blowup of $\bX_{d,n}$ (see for example \cite{FultonMacPherson}). At this point, it is natural to ask what is the advantage of this point of view compared to the space of isomorphism classes of complete Pick spaces on $n$ points. One key difference is that on $\bP_{d,n}$, we have the tautological bundle over this Grassmannian, and the bundle is equivariant with respect to the action of $\Aut(\B_d)$. We discuss bundles of complete Pick spaces in Section \ref{sec:bundles}. In particular, we raise Question \ref{quest:universality}. Roughly speaking, the question is whether the Grassmannian of complete Pick spaces $\bP_{d,n}$ possesses the universal property of the Grassmannian to some extent. Classically, given a rank $n$ vector bundle $\cE$ on a compact topological space $Z$, there exists a $d$ and a map $\varphi \colon Z \to \Gr(n,d)$, such that $\cE$ is the pullback via $\varphi$ of the tautological bundle on the Grassmannian. We ask whether this is true for bundles of complete Pick spaces.

We use a well-known construction of finite morphisms from algebraic geometry to provide examples of bundles of complete Pick spaces. Let $p$ be a homogenous polynomial that does not vanish on the $d$-th coordinate axis. Let $V(p) \subset \B_d$ be the hypersurface cut out by $p$. We project $V(p)$ onto the first $d-1$ coordinates and obtain a neighborhood of the origin $U \subset \B_{d-1}$ and a vector bundle $\cE_p$ on $U$. The fiber $\cE_{p,\lambda}$ for $\lambda \in U$ can be described as $\bigcap_{j=1}^{d-1} \ker(S_j^* - \overline{\lambda_j})$, where $S_1,\ldots,S_{d-1}$ are the compressions of the first $d-1$ coordinates of the Arveson $d$-shift to the complement of the ideal generated by $p$. This result is obtained from the following theorem.
\begin{thmB}
Let $p \in \C[z_1,\ldots,z_d]$ be a homogeneous polynomial, such that $p$ does not vanish on the $d$-th coordinate axis. Let $\cH_p = H^2_d \ominus M_p H^2_d$ and $S_j = P_{\cH_p} M_{z_j}|_{\cH_P}$ be the compressions of the Arveson $d$-shift. Then, there exists a neighborhood of the origin $U \subset \B_{d-1}$, such that the $d-1$-tuple $S_1^*,\ldots,S_{d-1}^*$ is in the Cowen-Douglas class on $U$.
\end{thmB}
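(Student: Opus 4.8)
The plan is to verify the defining properties of the Cowen--Douglas class for the commuting tuple $\mathbf{S}^* = (S_1^*,\ldots,S_{d-1}^*)$ over a suitable neighbourhood $U$ of the origin in $\B_{d-1}$: constant finite dimension of the joint eigenspaces, density of their linear span, and holomorphy of the resulting eigenbundle (this last subsuming the closed‑range requirement). First I would record the reductions. Since $M_p$ commutes with each $M_{z_j}$, the space $\cH_p = H^2_d \ominus \overline{M_p H^2_d} = \ker M_p^*$ is invariant for every $M_{z_j}^*$, so $S_j^* = M_{z_j}^*|_{\cH_p}$, the tuple is commuting, and for $\lambda\in\B_{d-1}$
\[
\bigcap_{j=1}^{d-1}\ker(S_j^* - \overline{\lambda_j}) = \cH_p \cap \cF_\lambda,\qquad \cF_\lambda := \bigcap_{j=1}^{d-1}\ker(M_{z_j}^* - \overline{\lambda_j}).
\]
The geometric input is that homogeneity of $p$ together with $p(0,\ldots,0,1)\neq 0$ forces $t\mapsto p(\lambda,t)$ to be a one‑variable polynomial of degree exactly $m := \deg p$ with the fixed nonzero leading coefficient $p(0,\ldots,0,1)$; hence its roots $t_1(\lambda),\ldots,t_m(\lambda)$ vary continuously with $\lambda$ and tend to $0$ as $\lambda\to0$. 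I would take $U$ small enough that $|t_i(\lambda)|^2 < 1-|\lambda|^2$ for all $i$ and all $\lambda\in U$, so that $V_\lambda := \{(\lambda,t_i(\lambda))\}_i$ lies in $V(p)\subset\B_d$; thus the projection $V(p)\to\B_{d-1}$ is an $m$‑sheeted branched cover over $U$, branching at the origin.

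For the dimension count I would use a slice decomposition. The space $\cF_\lambda$ is invariant under $M_{z_d}^*$, and restriction to the slice disc $\{\lambda\}\times\sqrt{1-|\lambda|^2}\,\D$ identifies it isometrically with $H^2(\D)$, carrying $M_{z_d}^*|_{\cF_\lambda}$ to $\sqrt{1-|\lambda|^2}$ times the backward shift (one verifies directly that $\cF_\lambda$ is the closed span of the kernels $k_{(\lambda,t)}$, $|t|^2 < 1-|\lambda|^2$). Using $p(z)-p(\lambda,z_d)\in(z_1-\lambda_1,\ldots,z_{d-1}-\lambda_{d-1})$, the fact that $M_p$ preserves $\cF_\lambda^\perp = \overline{\sum_{j<d}(z_j-\lambda_j)H^2_d}$, and this invariance, the compression $P_{\cF_\lambda}M_p|_{\cF_\lambda}$ becomes multiplication on $H^2(\D)$ by $\psi_\lambda(s) := p(\lambda,\sqrt{1-|\lambda|^2}\,s)$, a degree‑$m$ polynomial whose zeros in $\D$ are precisely the $t_i(\lambda)/\sqrt{1-|\lambda|^2}$. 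Since $\cH_p\cap\cF_\lambda = \cF_\lambda\ominus\overline{P_{\cF_\lambda}M_pH^2_d}$, it corresponds to the model space $H^2(\D)\ominus\psi_\lambda H^2(\D)$, of dimension $m$ for $\lambda\in U$. Transporting back, $\cH_p\cap\cF_\lambda$ is spanned by the reproducing kernels of $H^2_d$ at the points of $V_\lambda$, together with their $\overline{w_d}$‑derivatives up to the relevant root multiplicities. This gives the constant‑dimension property.

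For density, I would take $f\in\cH_p$ orthogonal to every $\cH_p\cap\cF_\lambda$, $\lambda\in U$. Then, along each slice over $U$, the function $z_d\mapsto f(\lambda,z_d)$ vanishes at each $t_i(\lambda)$ to the order prescribed by $p$, i.e.\ $f$ vanishes along $V(p)\cap\pi^{-1}(U)$ to that order. Since $V(p)$ is a cone, analytic continuation along each irreducible component propagates this to all of $V(p)$, so $p\mid f$ in $\cO(\B_d)$; write $f = pg$ with $g$ holomorphic on $\B_d$ (a removable‑singularity argument handles the singular locus). Because $p$ is \emph{homogeneous}, the partial sums of the homogeneous expansion of $f$ have the form $p\cdot(\text{polynomial})\in M_pH^2_d$ and converge to $f$ in $H^2_d$, whence $f\in\overline{M_pH^2_d} = \cH_p^\perp$ and $f = 0$.

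The remaining, and most delicate, step is holomorphy of the eigenbundle $\overline{\lambda}\mapsto\cH_p\cap\cF_\lambda$; the obstacle is that the individual sheets $t_i(\lambda)$ need not be holomorphic and all coalesce at $\lambda=0$, so one cannot simply take $\operatorname{span}\{k_{(\lambda,t_i(\lambda))}\}$ as a frame. Instead I would use a residue construction. Fixing $\lambda_0\in U$ and a cycle $\gamma\subset\C$ that encloses the roots of the conjugate‑coefficient polynomial $\overline{p}(\overline{\lambda_0},\,\cdot\,)$ and lies in $\{|\zeta|<\sqrt{1-|\lambda_0|^2}\}$, set, for $\ell = 0,\ldots,m-1$,
\[
\sigma_\ell(\overline{\lambda})(z) := \frac{1}{2\pi i}\oint_\gamma \frac{\zeta^\ell}{\overline{p}(\overline{\lambda},\zeta)}\cdot\frac{d\zeta}{1-\langle z',\lambda\rangle - z_d\zeta},\qquad z' = (z_1,\ldots,z_{d-1}).
\]
Because $(1-\langle z',\lambda\rangle)/z_d$ has modulus at least $\sqrt{1-|\lambda|^2}$ for every $z\in\B_d$, the kernel factor $1/(1-\langle z',\lambda\rangle-z_d\zeta) = k^{H^2_d}_{(\lambda,\overline{\zeta})}$ is pole‑free on $\gamma$, so $\sigma_\ell(\overline{\lambda})$ is a well‑defined holomorphic $H^2_d$‑valued function of $\overline{\lambda}$ near $\overline{\lambda_0}$; it lies in $\cF_\lambda$ since the $k^{H^2_d}_{(\lambda,\overline{\zeta})}$ are joint eigenvectors, and in $\cH_p$ because pairing with $M_pg$ turns the integrand into $\zeta^\ell\,\overline{g(\lambda,\overline{\zeta})}$, holomorphic in $\zeta$ inside $\gamma$ (here one uses $\overline{p(\lambda,\overline{\zeta})} = \overline{p}(\overline{\lambda},\zeta)$), so the integral vanishes by Cauchy's theorem. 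By the residue theorem, $\sum_\ell a_\ell\sigma_\ell(\overline{\lambda})$ is a combination of the generalized kernels at the points of $V_\lambda$ whose coefficients are governed by the jet of $a(\zeta) = \sum_\ell a_\ell\zeta^\ell$ at the roots of $\overline{p}(\overline{\lambda},\,\cdot\,)$; since $\deg a\le m-1$ while that polynomial has $m$ roots with multiplicity, this forces $a\equiv0$, so $\{\sigma_0(\overline{\lambda}),\ldots,\sigma_{m-1}(\overline{\lambda})\}$ is a holomorphic frame of $\cH_p\cap\cF_\lambda$ for $\lambda$ near $\lambda_0$. Combined with the two properties above, the holomorphic‑eigenbundle description of the Cowen--Douglas class yields $\mathbf{S}^*\in B_m(U)$. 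I expect the genuine difficulty to be precisely this control of the eigenbundle \emph{through} the branch point $\lambda=0$: checking that the $\sigma_\ell$ actually frame — and not merely span a subspace of — $\cH_p\cap\cF_\lambda$ in the confluent case, and that the slice identification of $\cF_\lambda$ with $H^2(\D)$ together with the passage from slice‑wise divisibility to $f\in\overline{M_pH^2_d}$ are as clean as claimed.
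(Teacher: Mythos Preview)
Your proof is correct and takes a genuinely different route from the paper's. For the dimension count the paper argues algebraically via $\C[z]/J_{p,\lambda} \cong \C[z_d]/(p(\lambda,z_d))$, whereas you identify $\cF_\lambda$ with $H^2(\D)$ and invoke one-variable model spaces; both give $\dim\cH_{p,\lambda}=m$. For density the paper is terse (it just says $f|_{V(p)}=0$ near $0$ forces $f=0$, using the standing reducedness assumption on $p$), while your divisibility-plus-homogeneity argument fills in the passage from $f\in p\,\cO(\B_d)$ to $f\in\overline{M_pH^2_d}$ and does not need $p$ reduced. The main divergence is the last step: the paper does \emph{not} construct a holomorphic frame but instead proves a factorization lemma---$f\in\cH_p$ lies in $\mathrm{ran}(S'-\lambda)$ iff $f$ vanishes to the prescribed orders at the points $(\lambda,\mu_j)$---by induction on $\deg p$ using the solution of the Gleason problem in $H^2_d$ (Alpay--Kaptano\u{g}lu) and the identity $\prod_i(z_d-\mu_i)-p(z)\in(z_1-\lambda_1,\ldots,z_{d-1}-\lambda_{d-1})$; this exhibits $\mathrm{ran}(S'-\lambda)=\cH_{p,\lambda}^\perp$ directly as closed, and the paper then cites Chen--Douglas to conclude. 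Your residue construction instead yields an explicit holomorphic frame $\sigma_0,\ldots,\sigma_{m-1}$ through the branch point, which is more hands-on and immediately useful for the curvature computations the paper later carries out in examples, at the cost of relying on the bundle characterization of the Cowen--Douglas class; your parenthetical ``subsuming the closed-range requirement'' is doing real work here, and for commuting tuples that equivalence should be pinned to a precise reference (Curto--Salinas) rather than left implicit.
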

The Cowen-Douglas class of operators was introduced in the seminal work \cite{CowenDouglas=CGOT} as a vast generalization of the backward shift on $H^2(\D)$. Tuples of commuting operators in the Cowen-Douglas class were studied by Curto and Salinas \cite{CurtoSalinas} and many others. In particular, Lu, Yang, and Zu \cite{LuYangZu} have studied operators in the Cowen-Douglas class arising from compressions of the shifts on the Hardy space of the bidisc. Their results are akin to ours but in a different setting.

In Section \ref{sec:stratification}, we discuss a stratification of $\bX_{d,n}^0$ and $\pick_n$. In particular, we show the following theorem.

\begin{thmC}[Theorem \ref{thm:reg_pick_spaces}]
There is an open dense subset $\pick_n^{reg} \subset \pick_n$ that has a structure of a smooth manifold. Moreover, the restriction of the tautological bundle on $\bP_{d,n}$ to the set of regular points is a vector bundle on $\pick_n^{reg}$.
\end{thmC}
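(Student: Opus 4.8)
The plan is to realize $\pick_n$ as a quotient of an ordered configuration space by the M\"obius group, and then to cut out the open dense locus on which this quotient is a smooth manifold carrying the descended tautological bundle. We may assume $n\ge 2$, the case $n=1$ being a single point. Put $d=n-1$, which by the Agler--McCarthy theorem \cite{AglerMcCarthy-completeNP} always suffices to realize a complete Pick space on $n$ points. Attaching to a complete Pick space on the labeled set $\{1,\dots,n\}$ its Agler--McCarthy datum $\lambda=(\lambda_1,\dots,\lambda_n)$ of $n$ distinct points in $\B_{n-1}$, the rigidity theorem of Rochberg \cite{Rochberg-hyp_geom} (and Davidson--Ramsey--Shalit \cite{DRS1,DRS2}) shows that two data yield isomorphic spaces exactly when they differ by an element of $\Aut(\B_{n-1})$; together with continuity of $\lambda\mapsto P_{\cH_\lambda}$ into $\bP_{n-1,n}^0$ and compatibility of the metric of \cite{OPS} with it, this identifies $\pick_n$ topologically with the orbit space $F_{n-1,n}/\Aut(\B_{n-1})$, where $F_{n-1,n}=\{(\lambda_1,\dots,\lambda_n)\in\B_{n-1}^n:\lambda_i\neq\lambda_j\}$ is the (connected) ordered configuration space. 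Along the $\Aut(\B_{n-1})$-equivariant map $F_{n-1,n}\to\bX_{n-1,n}^0$ forgetting the order, Theorem A lets us pull the equivariant tautological bundle on $\bP_{n-1,n}$ (identified, over $\bP_{n-1,n}^0$, with a bundle on $\bX_{n-1,n}^0$) back to an $\Aut(\B_{n-1})$-equivariant rank-$n$ bundle $\mathcal{T}$ on $F_{n-1,n}$, the equivariant structure being implemented fiberwise by the invertible (weighted composition) operators $C_g$, $g\in\Aut(\B_{n-1})$, on $H^2_{n-1}$.

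Next, let $F_{n-1,n}^{reg}\subset F_{n-1,n}$ consist of those $\lambda$ with $\edim\cH_\lambda=n-1$; equivalently, those for which, after applying an automorphism taking $\lambda_1$ to $0$, the vectors $\lambda_2,\dots,\lambda_n$ span $\C^{n-1}$; equivalently, those for which the (suitably normalized, positive) Agler--McCarthy Gram matrix of $\cH_\lambda$ has the maximal possible rank $n-1$. Since $\edim\cH_\lambda$ is the rank of this matrix, which varies continuously with $\lambda$, it is lower semicontinuous, so $F_{n-1,n}^{reg}$ is open; and it is dense because its complement is the zero locus of a not-identically-zero real-analytic function on the connected manifold $F_{n-1,n}$. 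The defining condition depends only on the isomorphism class of $\cH_\lambda$, hence is $\Aut(\B_{n-1})$-invariant, so $F_{n-1,n}^{reg}$ descends to an open dense subset $\pick_n^{reg}\subset\pick_n$.

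On $F_{n-1,n}^{reg}$ the action of $\Aut(\B_{n-1})$ is free: an element $g$ fixing every $\lambda_i$ becomes, after conjugation by the M\"obius involution $\varphi_{\lambda_1}$ carrying $\lambda_1$ to $0$, a unitary of $\C^{n-1}$ fixing the vectors $\varphi_{\lambda_1}(\lambda_2),\dots,\varphi_{\lambda_1}(\lambda_n)$, which span $\C^{n-1}$ exactly by the regularity hypothesis, whence $g=\mathrm{id}$. The action is also proper on all of $F_{n-1,n}$: if $g_m$ left every compact subset of $\Aut(\B_{n-1})$ while carrying a configuration $(\lambda_1,\dots,\lambda_n)$ in a fixed compact $K\subset F_{n-1,n}$ into one in a fixed compact $K'\subset F_{n-1,n}$, then the first point $\lambda_1$ would lie within a bounded hyperbolic distance of both $0$ and $g_m^{-1}(0)$, so $d_{\mathrm{hyp}}(0,g_m^{-1}(0))$ would stay bounded, contradicting $g_m\to\infty$ (equivalently, $g_m(0)\to\partial\B_{n-1}$, i.e.\ $d_{\mathrm{hyp}}(0,g_m^{-1}(0))\to\infty$). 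A free and proper action of a Lie group makes $\pick_n^{reg}=F_{n-1,n}^{reg}/\Aut(\B_{n-1})$ a smooth manifold and $F_{n-1,n}^{reg}\to\pick_n^{reg}$ a principal $\Aut(\B_{n-1})$-bundle. The total space of $\mathcal{T}|_{F_{n-1,n}^{reg}}$ carries the lifted action through the linear invertible operators $C_g$, again freely and properly and covering the base action, so the quotient is a locally trivial rank-$n$ vector bundle over $\pick_n^{reg}$ --- which is the asserted restriction of the tautological bundle on $\bP_{n-1,n}$, now realized as a bundle over $\pick_n^{reg}$.

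I expect the main obstacle to be the first step: making precise, via the rigidity theorem together with a comparison of the Banach--Mazur-type metric of \cite{OPS} with the quotient topology, that $\pick_n$ is the orbit space $F_{n-1,n}/\Aut(\B_{n-1})$; and, within the argument, the properness of the $\Aut(\B_{n-1})$-action, which is what upgrades ``$\pick_n^{reg}$ is the image of an open set'' to ``$\pick_n^{reg}$ is an open manifold'' and keeps the quotient Hausdorff. Once these are in place, the freeness on the regular locus is the short linear-algebra computation above, and the descent of an equivariant vector bundle along a free proper action is formal.
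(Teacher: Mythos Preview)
Your identification $\pick_n\cong F_{n-1,n}/\Aut(\B_{n-1})$ is incorrect, and this breaks the argument. An RKHS isomorphism is built from a bijection of the underlying point sets together with a rescaling, so two ordered data $(\lambda_1,\dots,\lambda_n)$ and $(\mu_1,\dots,\mu_n)$ give isomorphic complete Pick spaces iff there exist $g\in\Aut(\B_{n-1})$ \emph{and} $\sigma\in S_n$ with $g(\lambda_i)=\mu_{\sigma(i)}$. Thus $\pick_n\cong \bX_{n-1,n}^0/\Aut(\B_{n-1})\cong F_{n-1,n}/\bigl(S_n\times\Aut(\B_{n-1})\bigr)$, which is how the paper sets things up in Section~\ref{subsec:comp_pick}. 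Your freeness computation is correct for $\Aut(\B_{n-1})$ acting on the \emph{ordered} space, but once you pass to the correct quotient you must also divide by $S_n$, and the combined action is not free on your regular locus: the paper gives the explicit example $[X]=[0]+\sum_{j=1}^{n-1} [r e_j]$, which has maximal embedding dimension $n-1$ yet is fixed by the permutation representation of $S_{n-1}\subset U(n-1)\subset\Aut(\B_{n-1})$ acting on the unordered configuration. So your $\pick_n^{reg}$, defined by $\edim=n-1$, is too large: over such symmetric configurations the quotient acquires orbifold singularities and the bundle does not descend.

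The paper avoids this by taking $\bX_{n-1,n}^{0,reg}$ to be the principal-orbit-type locus for the proper $\Aut(\B_{n-1})$-action on the \emph{unordered} space (regular points in the sense of Koszul), which by general theory is open, dense, connected, and consists exactly of points with trivial stabilizer; non-emptiness is witnessed by configurations whose pairwise pseudohyperbolic distances are all distinct. Properness was already established in Section~\ref{subsec:comp_pick}, and then the manifold structure and the descent of the equivariant tautological bundle follow from standard results (tom~Dieck, Propositions~5.2 and~9.4). Your properness argument and your density argument are fine; what needs to change is the space you quotient and, correspondingly, the definition of ``regular'': it must encode trivial stabilizer in $\bX_{n-1,n}^0$, which is strictly stronger than $\edim=n-1$.
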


\section*{Acknowledgements}

The authors thank Matt Kennedy for introducing us to the noncommutative approach to the McCullough-Trent theorem. We also thank Orr Shalit and Victor Vinnikov for their helpful comments and discussions.

\section{Preliminaries} \label{sec:background}

\subsection{Geometry of the unit ball and its symmetrization} \label{subsec:ball}

Let $d \in \N$. We will denote the open unit ball in $\C^d$ by
\[
\B_d = \left\{ z \in \C^d \mid \sum_{j=1}^d |z_j|^2 < 1 \right\}.
\]
In particular, if $d = 1$, $\B_1 = \D$, the open unit disc. This section collects some necessary information from \cite{Rudin-book}. The set $\B_d$ is a homogenous space. For every point $0 \neq \lambda \in \B^d$, there is a unique involution that exchanges $\lambda$ and $0$ given by
\[
\varphi_{\lambda}(z) = \dfrac{\lambda - P_{\lambda} z - \sqrt{1 - \|\lambda\|^2} (I - P_{\lambda}) z}{1 - \langle z, \lambda \rangle}.
\]
Here, $P_{\lambda} z = \frac{1}{\|\lambda\|^2} \langle z, \lambda \rangle \lambda$. The automorphism group of the ball is generated by the involutions described above and unitaries. We will denote the automorphism group by $\Aut(\B_d)$. In fact, $\Aut(\B_d)$ is a quotient of $\mathrm{SU}(1,d)$ by a finite subgroup.

The pseudohyperbolic distance on $\B_d$ is defined by $d_{ph}(z,w) = \|\varphi_z(w)\|$. The analog of the Schwarz-Pick lemma for the ball tells us that every analytic self-map of $\B_d$ is a contraction with respect to $d_{ph}$. Hence, automorphisms are isometries with respect to $d_{ph}$. 

In this paper, we will deal with finite subsets of $\B_d$, allowing for multiplicities. To this end we consider the polyball $\B_d^n = \B_d \times \cdots \times \B_d$ ($ n $ times). The symmetric group $S_n$ acts on this product by permuting the components. We will denote by $\bX_{d,n}$ the quotient by this action. We call $\bX_{d,n}$ the symmetrized polyball (also known as the $n$-th symmetric power of $\B_d$). If we consider the polyball as a subset of $\C^{dn}$, then $\bX_{d,n}$ can be viewed as an image in $\C^{dn}$ under a symmetrization map. In particular, if $d=1$, our $\bX_{1,n}$ is biholomorphic to the symmetrized polydisc (see for example the works of Agler, Young, and Lykova in the case of the bidisc \cite{AglerYoung-hyp_geom_bidisc,AglerYoung-comp_geod_bidisc,ALY-ext_hol_bidisc}). It is sometimes convenient to consider points $ [X] \in \bX_{d,n}$ as formal sums. We consider the free abelian group $D(\B_d)$ generated by the points of $\B_d$. Namely, the elements of $D(\B_d)$ are finite formal sums of points in $\B_d$ with integer coefficients, for example $2[(0,0)] + [(1/2,0)] - [(1/2 i, - 1/2 i)]$. The positive elements $D_{+}(\B_d) \subset D(\B_d)$ are all such combinations with non-negative coefficients. The degree map is defined as $\deg \left( \sum_{j=1}^n m_j [\lambda_j] \right) = \sum_{j=1}^n m_j$. We note that we have the following identification
\[
\bX_{d,n} = \left\{ D \in D_{+}(\B_d) \mid \deg D =n \right\}.
\]
It will be sometimes convenient to view elements of $\bX_{d,n}$ as multisets and sometimes as described above.

The space $\bX_{d,n}$ inherits a metric from the polyball. First, we observe that $$d(X,Y) = \max_{1 \leq j \leq n} d_{ph}(x_j, y_j)$$ defines a metric on $ \B_d^n $. Here, we write $X = (x_1, \ldots, x_n), Y = (y_1, \ldots, y_n) \in \B_d^n $. In \cite{OPS}, the symmetric distance between the images $[X]$ and $[Y]$ of $X$ and $Y$, respectively, in $\bX_{d,n}$ is given by
\[
d_s([X],[Y]) = \min_{\sigma \in S_n} d(X, \sigma(Y)).
\]
Another distance that appears in various contexts that is related to the above is the optimal matching distance on $\C^{dn}$ defined by
\[
d_o([X],[Y]) = \min_{\sigma \in S_n} \max_{1 \leq j \leq n} \|x_j - y_{\sigma(j)}\|.
\]

\begin{lem} \label{lem:opt_vs_sym}
Let $0 < r < 1$, then for every two subsets of $n$ points $X, Y \subset r \overline{\B_d}$,
\[
d_s([X],[Y]) \leq \frac{1}{1 - r^2} d_o([X],[Y]).
\]
\end{lem}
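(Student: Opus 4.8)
The plan is to reduce everything to a single pointwise comparison of the pseudohyperbolic and Euclidean metrics on $r\overline{\B_d}$, and then lift that estimate to the symmetrized polyball by exploiting that both $d_s$ and $d_o$ are minima over the same permutation group.

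The first and essentially only substantive step is to establish the pointwise inequality
\[
d_{ph}(x,y) \leq \frac{1}{1-r^2}\|x-y\| \qquad \text{for all } x,y \in r\overline{\B_d}.
\]
For this I would start from the standard identity (see \cite{Rudin-book})
\[
1 - \|\varphi_x(y)\|^2 = \frac{(1-\|x\|^2)(1-\|y\|^2)}{|1-\langle y,x\rangle|^2},
\]
which rearranges to $\|\varphi_x(y)\|^2 = \bigl(|1-\langle y,x\rangle|^2 - (1-\|x\|^2)(1-\|y\|^2)\bigr)/|1-\langle y,x\rangle|^2$. Expanding the numerator and regrouping, one gets the identity
\[
|1-\langle y,x\rangle|^2 - (1-\|x\|^2)(1-\|y\|^2) = \|x-y\|^2 - \bigl(\|x\|^2\|y\|^2 - |\langle y,x\rangle|^2\bigr),
\]
where I use $\|x-y\|^2 = \|x\|^2 + \|y\|^2 - 2\Re\langle y,x\rangle$. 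By Cauchy--Schwarz the subtracted term $\|x\|^2\|y\|^2 - |\langle y,x\rangle|^2$ is nonnegative, so the numerator is at most $\|x-y\|^2$; meanwhile $|1-\langle y,x\rangle| \geq 1 - \|x\|\,\|y\| \geq 1-r^2$, since $\|x\|,\|y\|\leq r$. Dividing gives $\|\varphi_x(y)\|^2 \leq \|x-y\|^2/(1-r^2)^2$, which is the claim. (As a cross-check, one can instead substitute $y$ into the explicit formula for $\varphi_x$, write $y = \alpha\,x/\|x\| + w$ with $w\perp x$, and verify directly that the numerator $(\|x\|-\alpha)\,x/\|x\| - \sqrt{1-\|x\|^2}\,w$ has norm at most $\|x-y\|$ because $1-\|x\|^2\leq 1$; the denominator is handled as before.)

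The second step is purely formal: pick a permutation $\sigma\in S_n$ realizing the optimal matching distance, so $d_o([X],[Y]) = \max_{1\leq j\leq n}\|x_j - y_{\sigma(j)}\|$. Since $d_s([X],[Y])$ is the minimum over all permutations of $\max_j d_{ph}(x_j, y_{\sigma'(j)})$, it is at most the value at this particular $\sigma$; applying the pointwise estimate to each pair $(x_j, y_{\sigma(j)})\in r\overline{\B_d}\times r\overline{\B_d}$ and taking the maximum yields $d_s([X],[Y]) \leq \frac{1}{1-r^2}\max_j\|x_j - y_{\sigma(j)}\| = \frac{1}{1-r^2}d_o([X],[Y])$.

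The only place where any care is needed is the pointwise comparison, and there the ``obstacle'' is just choosing the right form of the Möbius identity so that the Cauchy--Schwarz defect term $\|x\|^2\|y\|^2 - |\langle y,x\rangle|^2$ appears with the correct sign; once that is in place the bound on the denominator by $1-r^2$ and the passage to the quotient are immediate.
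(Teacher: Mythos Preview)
Your proof is correct and follows essentially the same approach as the paper: both use the Rudin identity $1-\|\varphi_x(y)\|^2 = (1-\|x\|^2)(1-\|y\|^2)/|1-\langle y,x\rangle|^2$, expand the numerator to expose $\|x-y\|^2$ minus the Cauchy--Schwarz defect $\|x\|^2\|y\|^2 - |\langle y,x\rangle|^2$, and bound the denominator below by $(1-r^2)^2$. Your write-up is in fact slightly more complete than the paper's, which stops at the pointwise bound and leaves the passage to $d_s$ and $d_o$ implicit; your second step makes that explicit.
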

\begin{proof}
For $ z, w \in \B_d^n $, it follows from \cite[Theorem 2.2.2]{Rudin-book} that 
$$ \|\varphi_w(z)\|^2 = 1 - \frac{(1 - \|w\|^2)(1 - \|z\|^2)}{|1 - \langle z, w \rangle|^2} = \frac{|1 - \langle z, w \rangle|^2 - (1 - \|w\|^2)(1 - \|z\|^2)}{|1 - \langle z, w \rangle|^2} . $$
Consequently, from the definition of the pseudohyperbolic distance on $ \B_d^n $, we have that 
\begin{eqnarray*}
d_{ph} ( z, w )^2 & = & \frac{|1 - \langle z, w \rangle|^2 - (1 - \|w\|^2)(1 - \|z\|^2)}{|1 - \langle z, w \rangle|^2} \\
& = & \frac{\|w\|^2 + \|z\|^2 - \langle z, w \rangle - \langle w, z \rangle + | \langle z, w \rangle|^2  -\|z\|^2 \|w\|^2}{|1 - \langle z, w \rangle|^2}  \\
& = & \frac{\|z - w\|^2 + | \langle z, w \rangle|^2  -\|z\|^2 \|w\|^2}{|1 - \langle z, w \rangle|^2} \\
& \leq & \frac{1}{(1-r^2)^2} \|z-w\|^2
\end{eqnarray*} 
verifying the desired identity.
\begin{comment}
{\color{magenta} I have realigned this proof without changing the argument. We may keep one of these two versions depending upon your choice.}

By \cite[Theorem 2.2.2]{Rudin-book}, we have that for all $z, w \in \B_d$,
\begin{multline*}
d_{ph}(z,w)^2 = \|\varphi_w(z)\|^2 = 1 - \frac{(1 - \|w\|^2)(1 - \|z\|^2)}{|1 - \langle z, w \rangle|^2} = \frac{|1 - \langle z, w \rangle|^2 - (1 - \|w\|^2)(1 - \|z\|^2)}{|1 - \langle z, w \rangle|^2}  \\
= \frac{\|w\|^2 + \|z\|^2 - \langle z, w \rangle - \langle w, z \rangle + | \langle z, w \rangle|^2  -\|z\|^2 \|w\|^2}{|1 - \langle z, w \rangle|^2}  \\ = \frac{\|z - w\|^2 + | \langle z, w \rangle|^2  -\|z\|^2 \|w\|^2}{|1 - \langle z, w \rangle|^2} \leq \frac{1}{(1-r^2)^2} \|z-w\|^2.
\end{multline*}
\end{comment}
\end{proof}

Note that since $\B_d$ is complete with respect to the pseudohyperbolic metric, so is $\B_d^n$ with respect to $d$. 

\begin{prop}
The metric space $(\bX_{d,n},d_s)$ is complete
\end{prop}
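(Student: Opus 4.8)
The plan is to deduce completeness of $(\bX_{d,n}, d_s)$ from the completeness of $(\B_d^n, d)$ noted just above, by lifting Cauchy sequences along the quotient map $\pi \colon \B_d^n \to \bX_{d,n}$. The only point that requires care is that a Cauchy sequence downstairs need not lift to a Cauchy sequence upstairs for an arbitrary choice of representatives; one must choose the representatives coherently, exploiting that $S_n$ is finite so that the minimum defining $d_s$ is attained. I expect this coherent inductive choice of lifts to be the only real step — it is the standard maneuver for quotient metrics by finite groups of isometries.

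Concretely, I would start with a Cauchy sequence $\big([X^{(k)}]\big)_{k \in \N}$ in $(\bX_{d,n}, d_s)$ and first pass to a subsequence (still written the same way) with $d_s\big([X^{(k)}],[X^{(k+1)}]\big) < 2^{-k}$ for all $k$; since $d_s$ is a metric, it suffices to show this subsequence converges, because a Cauchy sequence with a convergent subsequence converges. Then I would choose representatives $Y^{(k)} \in \B_d^n$ of $[X^{(k)}]$ inductively: take $Y^{(1)}$ to be any representative of $[X^{(1)}]$, and given $Y^{(k)}$, pick $\sigma_k \in S_n$ attaining $\min_{\sigma \in S_n} d\big(Y^{(k)}, \sigma(X^{(k+1)})\big)$ (possible because $S_n$ is finite) and set $Y^{(k+1)} = \sigma_k(X^{(k+1)})$, so that $d\big(Y^{(k)}, Y^{(k+1)}\big) = d_s\big([X^{(k)}],[X^{(k+1)}]\big) < 2^{-k}$.

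Finally I would check that $(Y^{(k)})$ is Cauchy in $(\B_d^n,d)$: for $m > k$ the triangle inequality gives $d\big(Y^{(k)}, Y^{(m)}\big) \leq \sum_{j=k}^{m-1} d\big(Y^{(j)}, Y^{(j+1)}\big) < 2^{1-k}$. By completeness of $(\B_d^n, d)$ there is $Y \in \B_d^n$ with $Y^{(k)} \to Y$, and then $d_s\big([X^{(k)}],[Y]\big) = \min_{\sigma} d\big(Y^{(k)}, \sigma(Y)\big) \leq d\big(Y^{(k)}, Y\big) \to 0$, so $[X^{(k)}] \to [Y]$ in $\bX_{d,n}$. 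This uses nothing about $\B_d$ beyond completeness with respect to $d_{ph}$ and the fact that $S_n$ acts by $d$-isometries; alternatively one could simply invoke the general fact that the quotient of a complete metric space by a finite group of isometries, with the orbit metric, is complete.
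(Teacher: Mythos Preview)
Your proof is correct and follows the same overall strategy as the paper --- lift the Cauchy sequence from $\bX_{d,n}$ to $\B_d^n$ and invoke completeness there --- but the tactic for producing the lift is different. The paper does not pass to a summable subsequence; instead it applies a pigeonhole argument: for each $\sigma\in S_n$ it sets $A_\sigma=\{k:\ d_s([X_k],[X_{k+1}])=d(X_k,\sigma(X_{k+1}))\}$, picks one infinite $A_\sigma$, passes to that subsequence, and then uses the \emph{single} permutation $\sigma$ repeatedly via $Y_k=\sigma^{k-1}(X_k)$, obtaining $d(Y_k,Y_{k+1})=d_s([X_k],[X_{k+1}])$. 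Your inductive choice of a (possibly different) $\sigma_k$ at each step, combined with the preliminary $2^{-k}$ thinning, is the more standard quotient-by-finite-isometry-group argument; it has the advantage that Cauchyness of $(Y^{(k)})$ follows immediately from the triangle inequality and summability, whereas the paper's version, as written, jumps from control of consecutive distances to the assertion that $(Y_k)$ is Cauchy without making the summability step explicit. The paper's pigeonhole idea is a bit slicker in that it avoids tracking a sequence of permutations, but your approach is cleaner and self-contained.
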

\begin{proof}
Let $ \{ X_k \} \in \B_d^n$ be a sequence, such that the corresponding sequence $ \{ [X_k] \} \subset \bX_{d,n}$ is Cauchy with respect to $d_s$. For each $\sigma \in S_n$, we set
\[
A_{\sigma} = \left\{ k \in \N \mid d_s([X_k],[X_{k+1}]) = d(X_k,\sigma(X_{k+1})\right\}
\]
Since $\N = \bigcup_{\sigma \in S_n} A_{\sigma}$ at least one of the $A_{\sigma}$ is infinite. Passing to a subsequence, we may assume that there is always one $\sigma$. Now set $Y_k = \sigma^{k-1}(X_k)$ and note that
\[
d(Y_k,Y_{k+1}) = d(\sigma^{k-1}(X_k),\sigma^k(X_{k+1})) = d(X_k, \sigma(X_{k+1})) = d_s([X_k], [X_{k+1}]).
\]
Hence, $ \{ Y_k \} $ is a Cauchy sequence that converges to some $Y$. By continuity of the quotient map $ \{ [X_k] \} $ converges to $[Y]$.
\end{proof}

\subsection{Joint spectrum of commuting tuples} \label{subsec:joint_spectrum}

Let $A = (A_1, \ldots, A_d) \in B(\cH)^d$ be a commuting $d$-tuple of operators. For a point $\lambda \in \C^d$, we will write $A -\lambda I$ for the $d$-tuple $(A_1 - \lambda_1 I, \ldots, A_d - \lambda_d I)$. For a multi-index $\alpha \in \left(\N \cup \{0\}\right)^d$ we will write $A^{\alpha} = \prod_{j=1}^d A_j^{\alpha_j}$. The row and column norms of a $d$-tuple are defined by
\[
\|A\|_{row} = \left\| \sum_{j=1}^d A_j A_j^*\right\|^{1/2} \text{ and } \|A\|_{col} = \left\| \sum_{j=1}^d A_j^* A_j\right\|^{1/2}.
\]
In particular, we say that $A$ is a row contraction, if $\|A\|_{row} \leq 1$. If the inequality is strict, $A$ is called a strict row contraction. We say that $A$ is a commuting (strict) row contraction if $A$ is both a commuting tuple and a (strict) row contraction. There is an optimal inequality between these norms for every $d$-tuple of matrices $A$:
\[
\frac{1}{\sqrt{d}} \|A\|_{col} \leq \|A\|_{row} \leq \sqrt{d} \|A\|_{col},
\]

The notion of a joint spectrum of a commuting tuple of matrices is well-established. In general, for a $d$-tuple of commuting operators, there are many notions of a joint spectrum that are not equivalent (see \cite{Curto-spectrum} for an excellent overview). Fortunately for us, all of these definitions coincide in the case of matrices and agree with the following one.

\begin{dfn} \label{dfn:joint_spectrum}
We define the joint spectrum of a commuting tuple $A \in M_n^d$ to be
\[
\sigma(A) = \left\{ \lambda \in \C^d \mid \bigcap_{j=1}^d \ker (A_j - \lambda_j I) \neq \{0\} \right\}.
\]
\end{dfn}
We note that it is easy to see using the simultaneous upper-triangular form that for a commuting row contraction, we have that $\sigma(A) \subset \overline{\B_d}$. It also follows from the work of Popescu \cite{Popescu-similarity} (see also \cite{SSS2} for a different proof) that $\sigma(A) \subset \B_d$ if and only if $A$ is jointly similar to a strict row contraction. In other words, there exists $S \in \GL_n$, such that the $d$-tuple $S^{-1} A S = (S^{-1} A_1 S, \ldots, S^{-1} A_d S)$ is a strict row contraction.

In this paper, we will need more information from the spectrum. To this end, we adjust the definition slightly. First, we need the notion of a root subspace of a tuple $A$.
\[
R_{\lambda}(A) = \bigcap_{|\alpha| = n} \ker(A - \lambda I)^{\alpha}.
\]
We note that in the case $d=1$, $R_{\lambda}$ is the classical root subspace of $A$. Namely, the space spanned by the eigenvectors corresponding to $\lambda$ and the associated vectors, if any. It follows from \cite[Corollary 9.1.3]{GLR-invaraint_book} that $\C^n = \sum_{\lambda \in \sigma(A)} R_{\lambda}(A)$. This sum is a non-orthogonal direct sum decomposition. In particular, $\sum_{\lambda \in \sigma(A)} \dim R_{\lambda}(A) = n$. 

\begin{dfn} \label{dfn_joint_spectrum_with_multiplicity}
Let $A \in M_n^d$ be similar to a strict commuting row contraction. We set
\[
\sigma_J(A) = \sum_{\lambda \in \sigma(A)} \dim R_{\lambda}(A) [\lambda] \in \bX_{d,n}. 
\]
\end{dfn}

It will be important for us in the later stages of the paper to understand the continuity properties of the joint spectrum. In this presentation, we follow Elsner \cite{Elsner-joint}. We fix $n \in \N$ and $A, B \in M_n^d$ two commuting $d$-tuples. We define $S_n(\Delta,r)$ as the spectral radius of the matrix
\[
\begin{pmatrix}
0 & \Delta & &  \\ & \ddots & &  \\ & & 0 & \Delta \\ r & \cdots & r & r
\end{pmatrix}.
\]
The number $\tilde{\Delta}(B)$ measures the non-normality of $B$. More precisely, if $D_n$ stands for the algebra of diagonal matrices and $N_n$ for the algebra of strictly upper-triangular ones, then one defines
\[
\tilde{\Delta}(B) = \min \{ \|N\|_{col} \mid U^* B U = D + N,\, U \in U_n,\, D \in D_n, \text{ and } N \in N_n\}
\]
By \cite[Theorem 7.1]{Elsner-joint}, we have that
\[
d_H(\sigma(A),\sigma(B)) \leq S_n(\tilde{\Delta}(B), \|A - B\|_{col}).
\]
Here $d_H$ stands for the Hausdorff distance. Now as observed in \cite{Elsner-var} and used in \cite{Bhatx2} to the joint spectrum case,
\[
d_H(\sigma(A), \sigma(B)) \leq n^{1/n} (2M)^{1-1/n}\|A-B\|_{col}^{1/n}.
\]
Here $M = \max\{\|A\|_{col}, \|B\|_{col}\}$. If we replace the column norm with the row norm, we obtain the following lemma.

\begin{lem} \label{lem:bound_on_hausdorff}
Let $A, B \in M_n^d$ be two commuting row contractions, then there exists a constant $c(n,d) > 0$ that depends only on $n$ and $d$, such that
\[
d_H(\sigma(A), \sigma(B)) \leq c(n,d) \|A - B\|_{row}^{1/n}.
\]
\end{lem}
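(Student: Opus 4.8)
The plan is to obtain the stated bound directly from the column-norm estimate
\[
d_H(\sigma(A),\sigma(B)) \leq n^{1/n}(2M)^{1-1/n}\|A-B\|_{col}^{1/n}, \qquad M = \max\{\|A\|_{col},\|B\|_{col}\},
\]
recorded just before the statement, by converting every column norm into a row norm via the optimal comparison $\tfrac{1}{\sqrt d}\|X\|_{col} \leq \|X\|_{row} \leq \sqrt d\,\|X\|_{col}$, which holds for an arbitrary $d$-tuple of matrices $X$.

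First I would apply the right-hand comparison to $X = A - B$ to get $\|A-B\|_{col} \leq \sqrt d\,\|A-B\|_{row}$. Next, since $A$ and $B$ are row contractions we have $\|A\|_{row} \leq 1$ and $\|B\|_{row} \leq 1$, so the same comparison gives $\|A\|_{col} \leq \sqrt d$ and $\|B\|_{col} \leq \sqrt d$, hence $M \leq \sqrt d$. Substituting both estimates into the displayed inequality yields
\[
d_H(\sigma(A),\sigma(B)) \leq n^{1/n}(2\sqrt d)^{1-1/n}(\sqrt d)^{1/n}\,\|A-B\|_{row}^{1/n} = 2^{1-1/n}\sqrt d\, n^{1/n}\,\|A-B\|_{row}^{1/n},
\]
so one may take $c(n,d) = 2^{1-1/n}\sqrt d\, n^{1/n}$, which indeed depends only on $n$ and $d$.

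I do not expect a genuine obstacle here; the only point worth a remark is that the Elsner bound and its Bhatia-type refinement are stated for arbitrary commuting $d$-tuples of matrices, and that for commuting row contractions $\sigma(A),\sigma(B)\subset\overline{\B_d}$ (as noted after Definition \ref{dfn:joint_spectrum}), so $d_H(\sigma(A),\sigma(B))$ is a Hausdorff distance between nonempty compact subsets of $\overline{\B_d}$ and is finite. Everything else is the bookkeeping above.
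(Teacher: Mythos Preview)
Your proposal is correct and is exactly what the paper intends: the paper does not give a separate proof of the lemma but simply remarks, just before stating it, that ``if we replace the column norm with the row norm, we obtain the following lemma,'' i.e., one feeds the row--column comparison $\|X\|_{col}\leq\sqrt d\,\|X\|_{row}$ into the Elsner/Bhatia bound precisely as you do. Your explicit constant $c(n,d)=2^{1-1/n}\sqrt d\,n^{1/n}$ is a correct (and more informative) instance of what the paper leaves implicit.
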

However, the Hausdorff distance is unsatisfactory for our purposes since it ignores multiplicities. Therefore, we need to refine this result. To this end, we need some lemmas. First, Theorem 3.4 in \cite{KosPles} together with the definition of a stable subspace of a commuting $d$-tuple $A$ yield the lemma below.
\begin{lem} \label{lem:approx_root_subspaces}
Let $A \in M_n^d$ be a commuting tuple of matrices. Then for every $\lambda \in \sigma(A)$ and for every $\varepsilon > 0$, there exists $\delta > 0$, such that if $B \in M_n^d$ is a commuting tuple, such that $\|A - B\| < \delta$, then there exists a $B$-invaraint subspace $\cN \subset \C^n$, such that $\|P_{\cN} - P_{\cR_{\lambda}(A)}\| < \varepsilon$.
\end{lem}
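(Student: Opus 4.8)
The plan is to reduce the statement to the stability of the root subspace $\cR_\lambda(A)$ under perturbations of the single operator $A^\alpha$-combinations, and then invoke \cite[Theorem 3.4]{KosPles} exactly as cited. First I would recall that the root subspace $\cR_\lambda(A) = \bigcap_{|\alpha| = n} \ker(A - \lambda I)^\alpha$ is the largest $A$-invariant subspace on which $A - \lambda I$ acts nilpotently, equivalently the spectral (Riesz) subspace of the commutative algebra generated by $A_1,\ldots,A_d$ associated to the character $\mu \mapsto \lambda$. By \cite[Corollary 9.1.3]{GLR-invaraint_book}, $\C^n = \bigoplus_{\mu \in \sigma(A)} \cR_\mu(A)$, and $\cR_\lambda(A)$ is precisely the range of the Riesz idempotent $E_\lambda$ obtained by a joint functional calculus that separates $\lambda$ from the rest of $\sigma(A)$; concretely, since the $A_j$ commute one can write $E_\lambda$ as a polynomial in $A_1,\ldots,A_d$ (interpolating $1$ at $\lambda$ and $0$ at the other points of $\sigma(A)$ to sufficiently high order). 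This exhibits $\cR_\lambda(A)$ as a \emph{stable subspace} in the sense used in \cite{KosPles}: it is invariant and spectrally maximal at $\lambda$.

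Next I would spell out the notion of stable invariant subspace and quote \cite[Theorem 3.4]{KosPles}: if $\cN$ is a stable invariant subspace for $A$, then for every $\varepsilon > 0$ there is a $\delta > 0$ such that every commuting tuple $B$ with $\|A - B\| < \delta$ has an invariant subspace $\cN'$ with $\|P_{\cN'} - P_{\cN}\| < \varepsilon$. Applying this with $\cN = \cR_\lambda(A)$ gives exactly the conclusion. The only real content beyond citation is verifying that $\cR_\lambda(A)$ satisfies the hypothesis of that theorem, i.e. that it is stable. For a \emph{single} matrix this is classical (root subspaces are stable, unlike general invariant subspaces), and the commuting-tuple case follows because $\cR_\lambda(A)$ coincides with the root subspace of the single operator $A_1 + t_2 A_2 + \cdots + t_d A_d$ for generic scalars $t_2,\ldots,t_d$ — generic in the sense that this linear combination separates the points of $\sigma(A) \subset \C^d$ under the corresponding linear functional — so stability reduces to the one-operator statement together with the observation that a small perturbation of $A$ induces a small perturbation of that linear combination.

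The main obstacle, and the step requiring the most care, is the passage from the single-operator stability of root subspaces to the joint version: one must check that the ``generic linear functional'' argument is uniform enough that the resulting $\cN'$ is $B$-invariant for the whole tuple $B$ and not merely invariant for the perturbed linear combination $B_1 + t_2 B_2 + \cdots + t_d B_d$. This is handled by taking $\cN'$ to be the spectral subspace of the tuple $B$ itself associated to the cluster of points of $\sigma(B)$ near $\lambda$ — which exists and is well-defined once $\|A-B\|$ is small enough that, by Lemma \ref{lem:bound_on_hausdorff}, the points of $\sigma(B)$ near $\lambda$ are separated from the rest of $\sigma(B)$ — and then comparing the two Riesz idempotents $E_\lambda(A)$ and $E_\lambda(B)$ via the holomorphic functional calculus: the defining idempotents depend continuously on the tuple in operator norm as long as the relevant spectral separation persists, which yields the norm estimate $\|P_{\cN'} - P_{\cR_\lambda(A)}\| < \varepsilon$. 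Since \cite[Theorem 3.4]{KosPles} already packages this argument, I would keep the proof short: state the definition of stable subspace, note that $\cR_\lambda(A)$ is one, and cite the theorem.
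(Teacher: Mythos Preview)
Your proposal is correct and matches the paper's approach exactly: the paper simply states that the lemma follows from \cite[Theorem 3.4]{KosPles} together with the fact that root subspaces are stable invariant subspaces of the commuting tuple, with no further argument. Your additional detours through generic linear combinations and Riesz idempotents are unnecessary (and you recognize this yourself at the end), since the cited theorem in \cite{KosPles} is already formulated for commuting tuples; the short version you settle on---note $\cR_\lambda(A)$ is stable, cite the theorem---is precisely what the paper does.
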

The next lemma is a consequence of \cite[Theorem VI.1.8]{Bhatia-book}.
\begin{lem} \label{lem:dist_isom}
Let $\cH$ be a Hilbert space and $\cK_1, \cK_2 \subset \cH$ be $n$-dimensional subspaces. Let $P_1, P_2 \in B(\cH)$ be the projections onto $\cK_1$ and $\cK_2$, respectively. Given, an isometry $V_1 \colon \C^n \to \cH$, such that $\mathrm{ran} V_1 = \cK_1$, we can find an isometry $V_2 \colon \C^n \to \cH$, such that $\mathrm{ran} V_2 = \cK_2$ and, moreover,
\[
\|P_1 - P_2\| \leq \|V_1 - V_2\| \leq \sqrt{2} \|P_1 - P_2\|.
\]
\end{lem}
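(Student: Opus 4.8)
The plan is to reduce everything to the singular values of the cross-Gram operator. Fix once and for all an isometry $V_2^0\colon\C^n\to\cH$ with $\mathrm{ran}\,V_2^0=\cK_2$ (any orthonormal basis of $\cK_2$ gives one). Every isometry $\C^n\to\cH$ with range $\cK_2$ has the form $V_2^0U$ for a unique $U\in U_n$, so the problem is to choose $U$ well. Put $G=(V_2^0)^*V_1\in B(\C^n)$; then $\|G\|\le 1$ and $G^*G=V_1^*V_2^0(V_2^0)^*V_1=V_1^*P_2V_1$, so the singular values $\sigma_1\ge\cdots\ge\sigma_n\ge 0$ of $G$ are exactly the square roots of the eigenvalues of the compression of $P_2$ to $\cK_1$ (equivalently, $\sigma_j=\cos\theta_j$ for the principal angles $\theta_j\in[0,\pi/2]$ between $\cK_1$ and $\cK_2$).

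First I would record the two norms I need in terms of $\sigma_n$. For $\|P_1-P_2\|$: write $P_1-P_2=P_1(I-P_2)-(I-P_1)P_2$, whose two summands have ranges in the orthogonal subspaces $\cK_1$ and $\cK_1^\perp$; a short computation (the operators $(I-P_2)P_1(I-P_2)$ and $P_2(I-P_1)P_2$ are supported on the orthogonal subspaces $\cK_2^\perp$ and $\cK_2$) gives $\|P_1-P_2\|^2=\max\{\|(I-P_2)V_1\|^2,\|(I-P_1)V_2^0\|^2\}=\max\{\|I-G^*G\|,\|I-GG^*\|\}$. Since $G^*G$ and $GG^*$ have the same eigenvalue list (both are $n\times n$), this equals $1-\sigma_n^2$; this is the standard formula for the norm of a difference of projections in terms of principal angles, which is where I would invoke \cite[Theorem VI.1.8]{Bhatia-book} rather than redo it. For $\|V_1-V_2\|$: take a singular value decomposition $G=X\Sigma Y^*$ with $X,Y\in U_n$ and $\Sigma=\mathrm{diag}(\sigma_1,\ldots,\sigma_n)$, and set $U=XY^*$, $V_2=V_2^0U$. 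Then $V_2^*V_1=U^*G=Y\Sigma Y^*$ is positive semidefinite, so $(V_1-V_2)^*(V_1-V_2)=2I-2V_2^*V_1=2Y(I-\Sigma)Y^*$, whence $\|V_1-V_2\|^2=2(1-\sigma_n)$. Note that this construction of $V_2$ is legitimate whether or not $P_2|_{\cK_1}$ is invertible, so there is no need to treat the case $\|P_1-P_2\|=1$ separately.

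It remains to compare $1-\sigma_n^2$ with $2(1-\sigma_n)$. Writing $\sigma:=\sigma_n\in[0,1]$ and factoring $1-\sigma^2=(1-\sigma)(1+\sigma)$, we get $1-\sigma^2\le 2(1-\sigma)$ because $1+\sigma\le 2$, and $2(1-\sigma)\le 2(1-\sigma)(1+\sigma)=2(1-\sigma^2)$ because $1\le 1+\sigma$ (both inequalities trivial when $\sigma=1$). Hence $\|P_1-P_2\|^2=1-\sigma^2\le 2(1-\sigma)=\|V_1-V_2\|^2\le 2(1-\sigma^2)=2\|P_1-P_2\|^2$, and taking square roots yields the lemma. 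The only genuinely nontrivial ingredient is the identity $\|P_1-P_2\|=\sqrt{1-\sigma_n^2}$; everything else is bookkeeping with the singular value decomposition and with adjoints.
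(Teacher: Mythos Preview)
Your proof is correct and follows essentially the same route as the paper: both choose $V_2$ so that $V_2^*V_1$ is positive (you via the SVD/polar decomposition of the cross-Gram matrix $G=(V_2^0)^*V_1$, the paper's commented argument via the CS decomposition of \cite[Theorem~VI.1.8]{Bhatia-book}), after which $\|V_1-V_2\|^2=2(1-\sigma_n)$ and $\|P_1-P_2\|^2=1-\sigma_n^2$ yield the inequalities by the same elementary comparison. Your version is slightly tidier in that working with the $n\times n$ matrix $G$ directly avoids the paper's case split on $\dim\cH\ge 2n$ versus $\dim\cH<2n$.
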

Now, we are ready to prove the proposition that we will require. Recall that Definition \ref{dfn_joint_spectrum_with_multiplicity} describes $\sigma_J(A) \in \bX_{d,n}$ as the joint spectrum of $A$ with keeping track of multiplicities.
\begin{prop} \label{prop:convergence_of_spectra}
Let $A_m \in M_n^d$ be a sequence of commuting row contractions that converges to a commuting row contraction $A$, then
\[
\lim_{n \to \infty} d_o(\sigma_J(A_n), \sigma_J(A)) = 0.
\]
\end{prop}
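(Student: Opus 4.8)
The plan is to deduce convergence in the optimal matching distance from two ingredients: the Hausdorff convergence of the underlying spectra (Lemma \ref{lem:bound_on_hausdorff}) and the convergence of the root subspace projections (Lemma \ref{lem:approx_root_subspaces}), using the combinatorial structure that the dimensions of root subspaces sum to $n$. The key point is that multiplicities cannot drop in the limit without the spectra failing to converge in Hausdorff distance, and they cannot jump up without violating the total count $n$; so multiplicities are "locally constant" in a suitable sense, and what remains is to track the locations.

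\emph{First} I would fix $\varepsilon > 0$ and let $\sigma(A) = \{\lambda_1, \ldots, \lambda_k\}$ with multiplicities $m_i = \dim R_{\lambda_i}(A)$, so $\sum m_i = n$. Choose $\varepsilon$ small enough (relative to the minimal pairwise pseudohyperbolic distance among the $\lambda_i$) that the balls $B_{ph}(\lambda_i, \varepsilon)$ are pairwise disjoint. By Lemma \ref{lem:bound_on_hausdorff}, for $m$ large the set $\sigma(A_m)$ is contained in $\bigcup_i B_{ph}(\lambda_i, \varepsilon)$ and meets each $B_{ph}(\lambda_i,\varepsilon)$; write $\sigma_i(A_m) = \sigma(A_m) \cap B_{ph}(\lambda_i, \varepsilon)$. \emph{Second}, I would argue that for $m$ large, $\sum_{\mu \in \sigma_i(A_m)} \dim R_\mu(A_m) = m_i$ for each $i$. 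The inequality $\geq$ comes from Lemma \ref{lem:approx_root_subspaces}: there is a $B = A_m$-invariant subspace $\cN$ with $\|P_\cN - P_{R_{\lambda_i}(A)}\| < \varepsilon'$, so $\dim \cN = m_i$ for $m$ large, and since $\cN$ decomposes along the root subspaces of $A_m$ sitting over the (disjoint) pieces of $\sigma(A_m)$, and $\cN$ is "close" to $R_{\lambda_i}(A)$, it must be carried by the $\mu \in \sigma_i(A_m)$, giving $\sum_{\mu \in \sigma_i(A_m)} \dim R_\mu(A_m) \geq m_i$. Summing over $i$ and using $\sum_i m_i = n = \sum_{\mu \in \sigma(A_m)} \dim R_\mu(A_m)$ forces equality for every $i$.

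\emph{Third}, with multiplicities matched block by block, I would build the permutation witnessing smallness of $d_o$. Inside the block over $\lambda_i$, the target multiset $\sigma_J(A)$ has the point $\lambda_i$ repeated $m_i$ times, while $\sigma_J(A_m)$ has the points $\mu \in \sigma_i(A_m)$ with total multiplicity $m_i$; all these $\mu$ lie in $B_{ph}(\lambda_i, \varepsilon)$. Pairing each copy of such a $\mu$ with a copy of $\lambda_i$ gives, for this block, a matching with $\max \|\mu - \lambda_i\|$ controlled: on the fixed compact set $r\overline{\B_d}$ containing all spectra (the $A_m$ are row contractions, and in fact converge, so by the Popescu similarity remark after Definition \ref{dfn:joint_spectrum} one can localize to a ball $r\overline{\B_d}$ with $r<1$ — or simply note Lemma \ref{lem:bound_on_hausdorff} already puts everything near $\sigma(A) \subset r\overline{\B_d}$), the Euclidean and pseudohyperbolic metrics are comparable, so $d_{ph}(\mu,\lambda_i) < \varepsilon$ yields $\|\mu - \lambda_i\| < C(r)\varepsilon$. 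Concatenating the block matchings produces $\sigma \in S_n$ with $\max_j \|x_j - y_{\sigma(j)}\| < C(r)\varepsilon$, hence $d_o(\sigma_J(A_m), \sigma_J(A)) \to 0$.

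The main obstacle is the \emph{second} step: rigorously showing the root-subspace multiplicities add up correctly within each small ball. Lemma \ref{lem:approx_root_subspaces} only furnishes a $B$-invariant subspace close to $R_{\lambda_i}(A)$, not that this subspace \emph{is} the sum of root subspaces of $B$ over $\sigma_i(B)$; one must combine it with the direct sum decomposition $\C^n = \sum_{\lambda \in \sigma(A_m)} R_\lambda(A_m)$ (from \cite[Corollary 9.1.3]{GLR-invaraint_book}) and the disjointness of the balls to conclude that a subspace $\varepsilon'$-close to $R_{\lambda_i}(A)$ — which has trivial intersection with $\bigoplus_{j\ne i} R_{\lambda_j}(A)$ — projects isomorphically onto, and is therefore contained in (by dimension count once equality is forced), $\bigoplus_{\mu \in \sigma_i(A_m)} R_\mu(A_m)$. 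This is where a little care with "closeness of subspaces implies comparable dimensions and compatible direct-sum positions" is needed, but it is standard perturbation theory once set up. Everything else is bookkeeping with the partition of $\sigma(A_m)$ into the disjoint balls and the metric comparison on a compact subset of $\B_d$.
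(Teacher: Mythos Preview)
Your overall strategy matches the paper's: use Lemma \ref{lem:approx_root_subspaces} to produce, for each $\lambda_i\in\sigma(A)$, an $A_m$-invariant subspace $\cN_m$ close to $R_{\lambda_i}(A)$; argue that exactly $m_i=\dim R_{\lambda_i}(A)$ eigenvalues of $A_m$ (with multiplicity) sit near $\lambda_i$; then assemble the matching block by block. Where you and the paper diverge is precisely the step you flag as the main obstacle. You propose to show $\cN_m\subset\bigoplus_{\mu\in\sigma_i(A_m)}R_\mu(A_m)$ by combining closeness of $\cN_m$ to $R_{\lambda_i}(A)$ with the root-subspace decomposition of $\C^n$ for $A_m$ --- this is workable, but it needs the (true, though not stated in the paper) fact that an invariant subspace of a commuting tuple decomposes along its root subspaces, and some care relating the root subspaces of $A_m$ back to those of $A$. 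The paper instead invokes Lemma \ref{lem:dist_isom} to pick isometries $V,W$ onto $\cN_m$ and $R_{\lambda_i}(A)$ with $\|V-W\|<\sqrt{2}\,\varepsilon$, and then applies Lemma \ref{lem:bound_on_hausdorff} to the \emph{compressed} tuples $V^*A_mV$ and $W^*AW$. Since $\sigma(W^*AW)=\{\lambda_i\}$, this forces all $m_i$ eigenvalues of $A_m|_{\cN_m}$ (counted with multiplicity) into a disc of radius $c(n,d)(1+2\sqrt{2})^{1/n}\varepsilon^{1/n}$ about $\lambda_i$; because $\cN_m$ is invariant, these contribute at least $m_i$ to the multiplicity count of $\sigma_J(A_m)$ in that disc. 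The total-count-equals-$n$ argument then gives equality, exactly as you say. So the paper sidesteps your flagged obstacle by reusing the Hausdorff estimate locally on the compressions rather than arguing abstractly about where $\cN_m$ sits; the cost is the extra appeal to Lemma \ref{lem:dist_isom}, and the gain is that no auxiliary structural fact about invariant subspaces is needed.
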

\begin{proof}
Let us fix $0 < \varepsilon < 1$. We obtain $\delta > 0$ from Lemma \ref{lem:approx_root_subspaces} and let $m_0 \in \N$ be such that for $m \geq m_0$, we have that $\|A_m - A\| < \min\{\delta, \varepsilon\}$. Now fix $\lambda \in \sigma(A)$, then for each $m \geq m_0$, we can find $\cN_m \subset \C^n$, that is $A_m$-invariant and $\|P_{\cN_m} - P_{\cR_{\lambda}(A)}\| < \varepsilon$. In particular, $\dim \cN_m = \dim \cR_{\lambda}(A) = k$. Let $V, W \colon \cK \to \C^n$ be isometries with $\mathrm{ran} V = \cN_m$ and $\mathrm{ran} W = \cR_{\lambda}(A)$, and $\|V - W\| < \sqrt{2} \varepsilon$. Then,
\[
\|V^* A_m V - W^* A W\| \leq \|A_m V - A W\| + \|V - W\| < (1 + 2 \sqrt{2}) \varepsilon.
\]
Note that since these spaces are invariant, $\sigma(W^* A W) = \{\lambda\}$ and $\sigma(V^* A_m V) \subset \sigma(A)$. In particular, we have that $\sigma(V^* A_m V)$ is contained in a disc of radius $c(n,d)(1 + 2 \sqrt{2})^{1/n} \varepsilon^{1/n}$ centered at $\lambda$. Now, let us take $\varepsilon$ small enough so that the discs with the above radii centered at different points of the spectrum are disjoint. We get that each such disc contains precisely $\dim \cR_{\lambda}(A)$ joint eigenvalues of $A_m$ counting multiplicities. Therefore, we can choose a permutation that shows that
\[
d_o(\sigma_J(A_m), \sigma_J(A)) \leq c(n,d)(1 + 2 \sqrt{2})^{1/n} \varepsilon^{1/n}.
\]
This completes the proof.
\end{proof}

\subsection{Drury-Arveson space} \label{subsec:DA}

The Drury-Arveson space is a Hilbert space of functions first considered by Drury \cite{Drury}. However, Arveson \cite{Arv-subalg3} was the first to identify its many properties analogous to the classical Hardy space $H^2(\D)$. We will provide only necessary information on the Drury-Arveson space in this section. The interested reader should consult the outstanding surveys \cite{Hartz-DA} and \cite{Shalit-DA}. We will assume basic familiarity with reproducing kernel Hilbert spaces (RKHS for short). We refer the reader to \cite{AglerMcCarthy-book} for the necessary background.

The Drury-Arveson space on $d$ variables $H^2_d$ (also known as the symmetric Fock space) is the completion of the polynomial ring $\C[z_1,\ldots,z_d]$ with respect to the following inner product
\[
\langle z^{\alpha}, z^{\beta} \rangle = \delta_{\alpha, \beta} \frac{\alpha!}{|\alpha|!}.
\]
Here, $\alpha, \beta \in \left(\N \cup \{0\}\right)^d$ are multi-indices. We write $z^{\alpha} = \prod_{j=1}^d z_j^{\alpha_j}$, $\alpha! = \prod_{j=1}^d \alpha_j !$, and $|\alpha| = \sum_{j=1}^d \alpha_j$. This inner product is also known as the Bombieri inner product \cite{BBEM}. We observe, that if $d=1$, that $H^2_1 = H^2(\D)$. It was observed by Arveson \cite{Arv-subalg3} that $H^2_d$ is a reproducing kernel Hilbert space with the kernel
\[
k(z,w) = \frac{1}{1 - \langle z, w \rangle};
\]
In particular, $H^2_d$ is a space of holomorphic functions on $\B_d$. For $\lambda \in \B_d$, we will denote by $k_{\lambda} \in H^2_d$ the kernel function $k_{\lambda}(z) = k(z,\lambda)$. More generally, for a multi-index $\alpha \in \left(\N \cup \{0\}\right)^d$, we will denote by $k_{\lambda}^{(\alpha)} \in H^2_d$ the vector such that for each $f \in H^2_d$,
\[
\langle f, k^{(\alpha)}_{\lambda} \rangle = \dfrac{\partial^{\alpha} f}{\partial z^{\alpha}}(\lambda).
\]
Now, by the properties of the inner product on $H^2_d$ and the Taylor series expansion at the origin, we have that
\[
\langle f, z^{\alpha} \rangle = \dfrac{1}{\alpha!}\dfrac{\partial^{\alpha} f}{\partial z^{\alpha}}(0) \|z^{\alpha}\|^2 = \dfrac{1}{|\alpha|!} \dfrac{\partial^{\alpha} f}{\partial z^{\alpha}}(0).
\]
Hence, $k_0^{(\alpha)} = |\alpha|! z^{\alpha}$. More generally, we have the following lemma:

\begin{lem} \label{lem:derivatives}
Let $\lambda \in \B_d$ and let $\alpha \in (\N \cup \{0\})^n$ be a multi-index. Then, $$k_{\lambda}^{(\alpha)}(z) = \dfrac{\partial^{\alpha} k}{\partial \bar{w}^{\alpha}} (z,\lambda) = \dfrac{|\alpha|! z^{\alpha}}{(1 - \langle z, \lambda \rangle)^{n+1}}$$
\end{lem}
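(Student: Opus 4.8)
The plan is to compute the vector $k_\lambda^{(\alpha)}$ by a direct differentiation of the reproducing kernel in the second variable, justified by the reproducing property. First I would recall that for any $f \in H^2_d$ we have $f(w) = \langle f, k_w \rangle$, and that the map $w \mapsto k_w$ is antiholomorphic from $\B_d$ into $H^2_d$; in particular it is smooth as a function of the real and imaginary parts of $w$, and one may differentiate under the inner product. Applying $\partial^\alpha/\partial \bar w^\alpha$ to the identity $f(w) = \langle f, k_w \rangle$ and evaluating at $w = \lambda$ gives
\[
\frac{\partial^\alpha f}{\partial z^\alpha}(\lambda) = \left\langle f, \frac{\partial^\alpha k_w}{\partial \bar w^\alpha}\Big|_{w=\lambda} \right\rangle,
\]
where the derivative lands on the $\bar w$-dependence because $k_w(z) = \overline{k(w,z)}$ is antiholomorphic in $w$. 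Comparing with the defining property of $k_\lambda^{(\alpha)}$, we conclude $k_\lambda^{(\alpha)}(z) = \frac{\partial^\alpha k}{\partial \bar w^\alpha}(z,\lambda)$.

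The second step is the explicit computation of $\frac{\partial^\alpha}{\partial \bar w^\alpha} \frac{1}{1 - \langle z, w\rangle}$ at $w = \lambda$. Since $\langle z, w\rangle = \sum_{j=1}^d z_j \bar w_j$, we have $\frac{\partial}{\partial \bar w_j}(1 - \langle z,w\rangle) = -z_j$, and an easy induction on $|\alpha|$ using the chain rule for $t \mapsto 1/t$ (whose $m$-th derivative is $(-1)^m m!\, t^{-m-1}$) shows
\[
\frac{\partial^\alpha}{\partial \bar w^\alpha}\, \frac{1}{1 - \langle z, w\rangle} = \frac{|\alpha|!\, z^\alpha}{(1 - \langle z, w\rangle)^{|\alpha|+1}}.
\]
Each differentiation in $\bar w_j$ pulls down a factor $z_j$ (from the inner function) and a factor coming from the outer $1/t$ derivative; after $|\alpha|$ steps one collects $z^\alpha$ times $|\alpha|!$ and the power $(1-\langle z,w\rangle)^{-(|\alpha|+1)}$. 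Setting $w = \lambda$ yields the stated formula. (I note the exponent in the statement is written $n+1$ with $\alpha \in (\N\cup\{0\})^n$, so $n$ here plays the role of $|\alpha|$ — the formula should read $(1-\langle z,\lambda\rangle)^{-(|\alpha|+1)}$, and the displayed $k_0^{(\alpha)} = |\alpha|!\, z^\alpha$ computed just above is the special case $\lambda = 0$, consistent with this.)

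The only genuine subtlety — hence the step I would be most careful about — is the legitimacy of differentiating under the inner product sign and, relatedly, checking that $\frac{\partial^\alpha k_w}{\partial \bar w^\alpha}\big|_{w=\lambda}$ genuinely defines an element of $H^2_d$ (rather than merely a formal expression). This can be handled either by expanding $k_w = \sum_\alpha \overline{w^\alpha}\, \frac{|\alpha|!}{\alpha!} z^\alpha$ in the orthonormal-type basis $\{z^\alpha\}$ and differentiating the norm-convergent series termwise (the differentiated series still converges in $H^2_d$ locally uniformly in $w \in \B_d$, since the coefficients are polynomials in $\bar w$ and the tail estimates are uniform on compact subsets of $\B_d$), or by invoking the standard fact that in a RKHS of holomorphic functions the kernel is real-analytic jointly and point-evaluations of derivatives are bounded functionals represented exactly by derivatives of the kernel. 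Once this analytic point is secured, everything else is the routine induction above.
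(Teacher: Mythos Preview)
Your proposal is correct and follows essentially the same route as the paper. The paper also proceeds by induction on $|\alpha|$: at each step it writes $\alpha = \alpha' + e_j$, forms the difference quotient $\frac{1}{\bar z}\bigl(k_{\lambda+z e_j}^{(\alpha')} - k_\lambda^{(\alpha')}\bigr)$, observes that pairing with any $f$ yields $\frac{\partial^\alpha f}{\partial z^\alpha}(\lambda)$ in the limit (i.e.\ weak convergence to $k_\lambda^{(\alpha)}$), and then evaluates the limit pointwise by pairing against $k_\mu$ and invoking the inductive formula for $k_\lambda^{(\alpha')}$. This is exactly your ``differentiate under the inner product'' step unpacked into difference quotients, followed by the same calculus induction you describe for the explicit formula. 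Your treatment of the analytic subtlety (existence of the derivative as an element of $H^2_d$, via the norm-convergent series expansion) is more explicit than the paper's, which implicitly relies on weak convergence plus the a~priori existence of $k_\lambda^{(\alpha)}$; and your observation about the exponent $n+1$ versus $|\alpha|+1$ is well taken---in the paper's statement, $n$ is being used for $|\alpha|$.
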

\begin{proof}
We prove the claim by induction on $|\alpha| = n$. The claim for $n = 0$ is the definition of the kernel of $H^2_d$. Hence, we may assume that $n > 0$. Let us assume without loss of generality that $\alpha_1 > 0$. Then $\alpha = \alpha' + e_1$. Let us write now
\[
\dfrac{\partial^{\alpha} f}{\partial z^{\alpha}}(\lambda) = \lim_{z \to 0} \dfrac{1}{z} \left[\dfrac{\partial^{\alpha'} f}{\partial z^{\alpha'}}(\lambda + z e_1) - \dfrac{\partial^{\alpha'} f}{\partial z^{\alpha'}}(\lambda)\right] = \lim_{z \to 0} \left\langle f, \frac{ k_{\lambda + z e_1}^{(\alpha')} - k_{\lambda}^{(\alpha')} }{\bar{z}} \right\rangle.
\]
In other words, $\frac{1}{\bar{z}} \left(k_{\lambda + z e_1}^{(\alpha')} - k_{\lambda}^{(\alpha')}\right)$ converges weakly to $k_{\lambda}^{(\alpha)}$. Now fix any $\mu \in \B_d$ and we have by induction
\[
k_{\lambda}^{\alpha}(\mu) = \lim_{z \to 0}\dfrac{1}{\bar{z}} \left[\dfrac{\partial^{\alpha'} k}{\partial \bar{w}^{\alpha'}}(\mu, \lambda + z e_1) - \dfrac{\partial^{\alpha'} k}{\partial \bar{w}^{\alpha'}}(\mu, \lambda) \right].
\]
The proof is complete.
\end{proof}

It is not hard to check that the multiplication by coordinates defines contractive operators $M_{z_j} f = z_j f$, for $1 \leq j \leq d$ on $H^2_d$. Moreover, the row $M_z = (M_{z_1},\ldots,M_{z_d})$ is a contraction. A row contraction $T = (T_1,\ldots,T_d) \in B(\cH)^d$ is called pure, if for every $\xi \in \cH$, $\lim_{n\to \infty} \sum_{|\alpha|=n} \| T^{\alpha *}\xi\|^2 =0$. Arveson proved that $M_z$ is the universal pure row contraction. This property is one important universality property of the Drury-Arveson space. We remark in passing that a finite-dimensional pure row contraction is similar to a strict contraction see \cite{Popescu-similarity} and \cite{SSS2}. Therefore, the joint spectrum of a finite-dimensional pure row contraction is contained in $\B_d$.

The multiplier algebra of $H^2_d$ will be denoted by $\cM_d$. This is an operator algebra equipped with the multiplier norm. It is well known that $\cM_d$ is the WOT closure of the algebra generated by the $M_{z_j}$. By standard facts about reproducing kernel Hilbert spaces, the algebra $\cM_d$ consists of bounded analytic functions on $\B_d$. However, the inclusion $\cM_d \hookrightarrow H^{\infty}(\B_d)$ is contractive and strict, unless $d = 1$. The second important universal property of the Drury-Arveson space was proved by Agler and McCarthy \cite{AglerMcCarthy-completeNP} based on earlier works of McCullough and of Quiggin. In order to describe this universal property, we need first to recall the complete Pick property.

Recall that the classical Pick interpolation problem is to find an analytic function $f \colon \D \to \overline{\D}$ satisfying $f(\lambda_i) = w_i$, for $1 \leq i \leq n$, where $\lambda_1,\ldots,\lambda_n \in \D$ and $w_1,\ldots,w_n \in \overline{\D}$. The solution of Pick and of Nevanlinna was that such a function exists if and only if the matrix  $\left(\frac{1 - w_i \overline{w_j}}{1 - \lambda_i \overline{\lambda_j}}\right)_{i,j=1}^n$ is positive -- this matrix is known as the Pick matrix. Moreover, it is true that if we want to find an analytic matrix-valued function $f$ of sup-norm at most $1$ that satisfies $f(\lambda_i) = W_i$, then the condition is essentially unchanged. Namely, we only need the positivity of the block matrix $\left(\frac{I - W_i W_j^*}{1 - \lambda_i \overline{\lambda_j}}\right)_{i,j=1}^n$. Note that the function $k(z,w) = \frac{1}{1 - z \bar{w}}$ is the Szegö kernel, which is the reproducing kernel for $H^2(\D)$. This leads to a generalization of this question to arbitrary reproducing kernel Hilbert spaces. To be more precise, we say that a reproducing kernel Hilbert space $\cH_k$ of functions on a set $X$ has the scalar Pick property, if for every set of points $x_1,\ldots,x_n \in X$ and values $w_1,\ldots,w_n \in \overline{\D}$, there exists a contractive multiplier $f$ on $\cH_k$, such that $f(x_i) = w_i$ if and only if the Pick matrix $\left((1 - w_i \overline{w_j})k(x_i,x_j)\right)_{i,j=1}^n$ is positive. It is rather straightforward to prove that the Pick matrix is positive if such a contractive multiplier exists. The converse usually fails. We will say that the space has the complete Pick property if the claim holds for matrix-valued multipliers of all sizes.

We add the relatively common hypothesis that our complete Pick spaces are irreducible. Namely, no two kernels are linearly dependent, and no two are orthogonal (see \cite{AglerMcCarthy-book}). The theorem of Agler and McCarthy \cite[Theorem 4.2]{AglerMcCarthy-completeNP} states that if $\cH_k$ is a complete Pick space, then there exists $d \in \N \cup \{\infty\}$ and a map $b \colon X \to \B_d$, such that the kernel $k$ is equivalent to the pullback of the kernel of the Drury-Arveson space via $b$. Therefore, in particular, the space is unitarily equivalent (as a reproducing kernel Hilbert space) to a multiplier-coinvariant subspace of $H^2_d$.

This development led Davidson, Ramsey, and Shalit \cite{DRS1, DRS2} to study subvarieties of the $\B_d$ cut out by multipliers and the corresponding reproducing kernel Hilbert spaces and multiplier algebras. They have shown that given two subvarieties $V, W \subset \B_d$ with ideals $I_V, I_W \subset \cM_d$ of multipliers that vanish on $V$ and $W$, respectively, the quotients $\cM_d/I_V$ and $\cM_d/I_W$ are completely isometrically isomorphic if and only if there exists an automorphism $\varphi \in \Aut(\B_d)$, such that $\varphi(V) = W$. It follows from \cite[Remark 9.3]{DRS1} that every $\varphi \in \Aut(\B_d)$ gives rise to a unitary $U_{\varphi} \in B(H^2_d)$ whose adjoint is a linear extension of 
$$ U_{\varphi}^* k_{\lambda} = \sqrt{1 - \|\varphi^{-1}(0)\|^2} k_{\lambda}(\varphi^{-1}(0)) k_{\varphi(\lambda)} . $$ In particular, if $\varphi = \varphi_{\mu}$ for some $\mu \in \B_d$, then $U_{\varphi}^* 1 = \sqrt{1 - \|\mu\|^2} k_{\mu}$ which is the normalized kernel at $\mu$. Moreover, for every $f \in \cM_d$, if $M_f$ is the multiplication operator by $f$, then $U_{\varphi} M_f U_{\varphi}^* = M_{f \circ \varphi}$. As mentioned in the introduction, this result was also obtained by Rochberg \cite{Rochberg-hyp_geom} in the case of finite-dimensional spaces.

\subsection{Finite-dimensional complete Pick spaces} \label{subsec:comp_pick}

Following Ofek, Pandey, and Shalit \cite{OPS}, we will be interested in finite-dimensional complete Pick spaces. Recall that if $\cH_k$ is a complete Pick space on $X$ with $|X| = n$, then by the Agler-McCarthy theorem, there exists an embedding $b \colon X \to \B_d$ (including the possibility that $d = \infty$) of $ X $ into $ \B_d $. We may also choose an equivalent kernel and assume that $b(x_1) = 0$. The points of $b(X)$ then span at most an $n-1$-dimensional subspace, implying that we can choose $d \leq n-1$ (see also \cite[Theorem 3.2]{Rochberg-tetrahedra} for a more precise and detailed result). Therefore, we will work only with $d \in \N$ from now on.

Let $X_1$ and $X_2$ be sets and $\cH_{k_1}$ and $\cH_{k_2}$ be RKHSs on them. Let $\varphi \colon X_1 \to X_2$ be a map. Following \cite{OPS}, we define a morphism of RKHSs to be a bounded linear operator of the form $T k_{1,x} = f(x) k_{2,\varphi(x)}$, where $f \colon X_1 \to \C$ is a bounded function. An isomorphism of RKHSs is one that arises from an invertible $\varphi$ and non-vanishing $f(x)$. In particular, an isomorphism is isometric if and only if $k_2$ is a rescaling of $k_1$ in the sense of \cite[Section 2.6]{AglerMcCarthy-book}. Based on this definition, Ofek, Pandey, and Shalit have defined a version of the Banach-Mazur distance for a complete Pick space of dimension $n$:
\[
\rho_{RK}\left(\cH_{k_1}, \cH_{k_2} \right) = \log \left(\inf \left\{ \|T\| \|T^{-1}\| \mid T \colon \cH_{k_1} \to \cH_{k_2} \text{ is an RKHS isomorphism} \right\}\right).
\]
Let us denote the space of all complete Pick RKHS of dimension $n$ by $\pick_n$. Let $\B_d^{n,0}$ be the subset of the product consisting of distinct points. Namely,
\[
\B_d^{n,0} = \left\{ (z_1,\ldots,z_n) \in \B_d^n \mid \forall\, i \neq j,\, z_i \neq z_j \right\}.
\]
This set is the complement of the ramification locus of the symmetrization map. We denote by $\bX_{d,n}^0$ the image of $\B_d^{n,0}$ in $\bX_{d,n}$. Since every complete Pick RKHS on $n$ points embeds in $\B_{n-1}$, we can define a surjective map $\Phi_n \colon \bX_{n-1,n}^0 \to \pick_n$ that sends a set of $n$ distinct points to the corresponding complete Pick space. To be more precise, if $[X]= \sum_{j=1}^n [\lambda_j] \in \bX_{n-1,n}^0$, then $\Phi_n([X]) = [\Span\{k_{\lambda_1},\ldots,k_{\lambda_n}\}]$. Here, the brackets on the right-hand side indicate the equivalence class. Note that the fiber of $\Phi_n$ over a point $\cH$ consists of all subspaces of $H^2_d$ spanned by $n$ kernel functions that are isomorphic as RKHSs to $\cH$. In particular, the fiber is a homogeneous space under $\Aut(\B_{n-1})$ by the results of \cite{DRS2} and \cite{Rochberg-hyp_geom}. We note that $\Aut(\B_d)$ acts on $\B_d^n$ diagonally, and this action commutes with the actions of $S_n$. Hence, we get an action of $\Aut(\B_d)$ on $\bX_{d,n}$. Moreover, the set $\bX_{d,n}^0$ is $\Aut(\B_d)$-invariant. The map $\Phi_n$ described above is a quotient map (not in the topological sense, a priori). In \cite{OPS}, the authors define the invariant symmetric distance by
\[
\rho_s([X],[Y]) = \inf_{\varphi \in \Aut(\B_d)}d_s([X],\varphi([Y])).
\]
The action of $\Aut(\B_d)$ on $\B_d$ is proper by \cite[Example in Section 1.2]{Koszul-book}. Consequently, by \cite[Remark 1.4.1]{Koszul-book} the diagonal action of $\Aut(\B_d)$ on $\B_d^n$ is proper if and only if, for every $z \in \B_d^n$, there exists a neighborhood $U$ of $z$, such that the set $G(U|U) = \{\varphi \in \Aut(\B_d) \mid \varphi(U) \cap U \neq \emptyset\}$ is relatively compact. Since the action of $\Aut(\B_d)$ on $\B_d$ is proper, for $ (\lambda_1,\ldots,\lambda_n) \in \B_d^n $, we can choose neighborhoods $U_i \subset \B_d$ of $\lambda_i$, such that $G(U_i|U_i)$ is relatively compact. Thus setting $U = \prod_{i=1}^n U_i$, we get that $G(U|U) = \bigcap_{i=1}^n G(U_i|U_i)$ which is relatively compact. It is now easy to see that the action of $\Aut(\B_d)$ on $\bX_{d,n}$ and $\bX_{d,n}^0$ is proper \cite[Remark 1.1.1]{Koszul-book}. By \cite[Section 2]{Koszul-book}, every orbit of $\Aut(\B_d)$ in $\bX_{d,n}$ and $\bX_{d,n}^0$ is closed. Hence, the symmetric invariant metric induces a metric on the quotient $\bX_{d,n}^0/\Aut(\B_d)$. In fact, the metric induces the quotient topology. By \cite[Theorem 5.4]{OPS}, the topology induced by $\rho_{RK}$ on $\pick_n$ is the same as the quotient topology on $\bX_{n-1,n}^0/\Aut(\B_{n-1})$. However, the metrics are not equivalent by \cite[Example 6.3]{OPS}.

\subsection{Multivariable Blaschke multipliers} \label{subsec:Blaschke}

In \cite{McCulTrent}, McCullough and Trent show that every multiplier invariant subspace of the Drury-Arveson space is an image of a partially isometric multiplier $\Theta \colon H^2_d \otimes \cK \to H^2_d$, where $\cK$ is an auxiliary Hilbert space. We will need to use such multipliers. However, we will require a precise formula. Therefore, we use the idea of Alpay and Kaptano\u{g}lu. In \cite{AlpKapt}, Alpay and Kaptano\u{g}lu solve an interpolation problem on the unit ball of $\C^d$ and provide a concrete formula for the interpolating function. We will use their construction to show that the corresponding function is a partially isometric multiplier of McCullough and Trent.

To prove this, we will require some noncommutative techniques. The full Fock space $\bH^2_d$ is the completion of the free algebra $\C \langle z_1,\ldots,z_d\rangle$ with respect to the inner product that makes the monomials an orthonormal basis. The full Fock space is the noncommutative analog of the Hardy space (see, for example \cite{AriasPopescu-factorization,DavidsonPitts-inv,DavidsonPitts-NPint,DavidsonPitts-alg,Popescu-funcI,Popescu-funcII,JMS-bso}). In particular, the full Fock space is a noncommutative reproducing kernel Hilbert space in the sense of Ball, Marx, and Vinnikov \cite{BMV-ncrkhs} (see also \cite{SSS1}). The operators of left multiplication by the variables are isometries on $\bH^2_d$ that we will denote by $L_j$, $1 \leq j \leq d$. The left shift $$L = \begin{pmatrix} L_1 &\ldots & L_d \end{pmatrix} \colon \C^d \otimes \bH^2_d \to \bH^2_d$$ is a row isometry. In fact, by the results of Frazho \cite{Frazho}, Bunce \cite{Bunce}, and Popescu \cite{Popescu-dilations}, it is the universal pure row isometry. Similarly, the operators $R_j$, the operators of multiplication by the coordinates on the right, form a row isometry $R$, which we call the right shift. The WOT-closed algebra $\bH^{\infty}_d$ generated by the $L_j$ is the algebra of left multipliers on $\bH^2_d$. One can view both $\bH^2_d$ and $\bH^{\infty}_d$ as noncommutative functions on the row ball (quantization of the $\B_d$) (see \cite{KVV} for details on the theory of noncommutative functions and \cite{Popescu-funcI,Popescu-funcII,SSS1} for function theory on the row ball). In particular, Popescu in \cite{Popescu-characteristic} and independently Davidson and Pitts \cite{DavidsonPitts-inv} have proved a noncommutative version of the Beurling theorem. Namely, if $\cH \subset \bH^2_d$ is a right shift invariant subspace, then there exists a row isometry $\Theta \colon \bH^2_d \otimes \C^k \to \bH^2_d$, where $k \in \N \cup \{\infty\}$, such that the coordinates of $\Theta$ belong to $\bH^{\infty}_d$ and $\cH = \Theta \left(\bH^2_d \otimes \C^k\right)$.

As was already observed by Arveson \cite{Arv-subalg3}, the Drury-Arveson space embeds isometrically into the full Fock space as the symmetric tensors. We will identify $H^2_d$ with its image in $\bH^2_d$. It is quite easy to see that $H^2_d \subset \bH^2_d$ is both left and right coinvariant and that for all $1 \leq j \leq d$, $L_j^*|_{H^2_d} = M_{z_j}^* = R_j^*|_{H^2_d}$. Hence, if $X \subset \B_d$ and $\cH_X \subset H^2_d$ is the subspace spanned by $\{k_{\lambda} \mid \lambda \in X\}$, then $\bH^2_d \ominus \cH_X$ is both left and right invariant (see \cite{DavidsonPitts-alg,DavidsonPitts-NPint} for more details). Using the ideas of \cite{AlpKapt}, we will construct the isometric left multiplier with image $\bH^2_d \ominus \cH_X$. To do this, we require the fact proved by Voiculescu in \cite{Voiculescu-symmetries} (see also \cite{DavidsonPitts-alg,McCarTimoney,Popescu-aut,SSS1} for different points of view on this), that every automorphism $\varphi \in \Aut(\B_d)$ gives rise to a unitary $U_{\varphi} \in B(\bH^2_d)$ that extends the unitary on the Drury-Arveson space defined above. We refer the reader to \cite[Section 4]{DavidsonPitts-alg} for all the necessary information on these unitaries. In particular, for every $\varphi \in \Aut(\B_d)$, we set $\varphi(L) = U_{\varphi} L (I_d \otimes U_{\varphi}^*)$, Clearly, $\varphi(L)$ is a row isometry. 

\begin{lem} \label{lem:varphi_a}
Let $\lambda \in \B_d$ and let $\varphi_{\lambda}$ be the involution that exchanges $\lambda$ and $0$. Then the range of $\varphi_{\lambda}(L)$ is $\bH^2_d \ominus \C k_{\lambda}$.
\end{lem}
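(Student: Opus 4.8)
The plan is to exploit that, by the very definition $\varphi_\lambda(L) = U_{\varphi_\lambda}\,L\,(I_d\otimes U_{\varphi_\lambda}^*)$, the row isometry $\varphi_\lambda(L)$ is just the left shift $L$ conjugated by unitaries, so its range equals the image under $U_{\varphi_\lambda}$ of $\operatorname{ran} L$. First I would record the standard fact that $\operatorname{ran} L = \bH^2_d \ominus \C 1$, where $1$ is the vacuum vector (the empty word): the subspaces $\operatorname{ran} L_j = z_j\bH^2_d$ are the closed spans of the words beginning with the letter $z_j$, they are pairwise orthogonal, and together they span all words of length at least one; equivalently, $L^*$ annihilates exactly the empty word, so $\ker L^* = \C 1$ and $(\operatorname{ran} L)^\perp = \C 1$. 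Since $I_d\otimes U_{\varphi_\lambda}^*$ is a unitary of $\C^d\otimes\bH^2_d$ and $U_{\varphi_\lambda}$ is unitary, it follows that $\operatorname{ran}\varphi_\lambda(L) = U_{\varphi_\lambda}\bigl(\operatorname{ran} L\bigr) = U_{\varphi_\lambda}\bigl(\bH^2_d\ominus\C 1\bigr) = \bH^2_d\ominus\C\,U_{\varphi_\lambda}1$. The lemma is therefore equivalent to the identity $\C\,U_{\varphi_\lambda}1 = \C k_\lambda$, i.e.\ to computing where $U_{\varphi_\lambda}$ sends the vacuum.

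For this I would invoke the formula for the action of $U_\varphi$ on kernel functions recalled in Subsection \ref{subsec:DA}; it applies in $\bH^2_d$ since the unitary there extends the Drury-Arveson unitary and the vectors $1 = k_0$ and $k_\lambda$ both lie in $H^2_d\subseteq\bH^2_d$. Because $\varphi_\lambda$ is an involution, $\varphi_\lambda^{-1}(0) = \varphi_\lambda(0) = \lambda$, and the formula gives $U_{\varphi_\lambda}^*1 = U_{\varphi_\lambda}^*k_0 = \sqrt{1-\|\lambda\|^2}\,k_0(\lambda)\,k_{\varphi_\lambda(0)} = \sqrt{1-\|\lambda\|^2}\,k_\lambda$. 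To pass from $U_{\varphi_\lambda}^*$ to $U_{\varphi_\lambda}$ I would take adjoints in the kernel formula: for $f\in H^2_d$ and $\mu\in\B_d$, using $\overline{k_\mu(\lambda)} = k_\lambda(\mu)$, one gets $(U_{\varphi_\lambda}f)(\mu) = \langle f, U_{\varphi_\lambda}^*k_\mu\rangle = \sqrt{1-\|\lambda\|^2}\,k_\lambda(\mu)\,f(\varphi_\lambda(\mu))$, so evaluating at $f = 1$ yields $U_{\varphi_\lambda}1 = \sqrt{1-\|\lambda\|^2}\,k_\lambda$; alternatively, since $\varphi_\lambda\circ\varphi_\lambda = \mathrm{id}$ the operator $U_{\varphi_\lambda}^2$ implements the identity automorphism and is hence a unimodular scalar, so $U_{\varphi_\lambda}$ and $U_{\varphi_\lambda}^*$ agree up to a scalar and again $\C\,U_{\varphi_\lambda}1 = \C\,U_{\varphi_\lambda}^*1 = \C k_\lambda$. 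Combined with the first paragraph this gives $\operatorname{ran}\varphi_\lambda(L) = \bH^2_d\ominus\C k_\lambda$.

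I do not anticipate a genuine obstacle: the argument is short. The only two points that deserve a word of justification are that the Drury-Arveson formula for $U_\varphi$ really does transfer to the full Fock space --- which holds because $U_{\varphi_\lambda}$ on $\bH^2_d$ restricts to the Drury-Arveson unitary and the relevant vectors sit inside $H^2_d$ --- and the harmless scalar ambiguity in the definition of $U_{\varphi_\lambda}$, which is irrelevant since we only ever need the one-dimensional subspace $\C\,U_{\varphi_\lambda}1$. The crux is simply the explicit evaluation $U_{\varphi_\lambda}1 = \sqrt{1-\|\lambda\|^2}\,k_\lambda$.
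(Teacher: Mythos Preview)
Your proposal is correct and follows essentially the same route as the paper's proof: both identify $(\operatorname{ran}\varphi_\lambda(L))^\perp = U_{\varphi_\lambda}((\operatorname{ran} L)^\perp) = \C\,U_{\varphi_\lambda}1$ and then evaluate $U_{\varphi_\lambda}1$ as a scalar multiple of $k_\lambda$. If anything, you are slightly more careful than the paper in justifying the passage from $U_{\varphi_\lambda}^*1$ to $U_{\varphi_\lambda}1$ via the involution property.
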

\begin{proof}
Let $\cH = \varphi_{\lambda}(L) (\C^d \otimes \bH^2_d)$. If $f \in \cH^{\perp}$, then for $G \in \C^d \otimes \bH^2_d$,
\[
0 = \langle \varphi_{\lambda}(L) G, f \rangle = \langle L (U_{\varphi_{\lambda}} \otimes I_d)^* G, U_{\varphi_{\lambda}}^* f \rangle.
\]
Since the orthogonal complement of the range of $L$ is spanned by the vacuum vector $1$, it follows that $U_{\varphi_{ \lambda }}^*f = \alpha$, for some $\alpha \in \C$ implying that $f =  \frac{\alpha}{\|k_{\lambda}\|} k_{\lambda}$ as $ \alpha U_{\varphi_{\lambda}} 1 = \frac{\alpha}{\|k_{\lambda}\|} k_{\lambda}$.
\end{proof}

Given $X = \{\lambda_1,\ldots,\lambda_n\} \subset \B_d$, we now construct a row isometry inductively. We set $F_1(L) = \varphi_{\lambda_1}(L)$, and assume for $ 2 \leq m \leq n$, that $F_{m-1}$ has been defined. Then, we define
\[
F_m(L) = F_{m-1}(L) (U_m \otimes I_{\bH^2_d}) A_m(L), \text{ where } A_m(L) = \begin{pmatrix} \varphi_{\lambda_m} & 0 \\ 0 & I_{(m-1)(d-1)} \end{pmatrix}.
\]
Here, $U_m$ is a scalar unitary of size $(m-1)(d-1) + 1$ satisfying $U_m e_1 = \frac{1}{\|F_{m-1}(\lambda_m)\|} F_{m-1}(\lambda_m)^*$. Observe that $A_m(L) \in M_{(m-1)(d-1) +1, (m-1)(d-1)+d}(\bH^{\infty}_d)$ and $F_m(L)$ is a row of size $(m-1)(d-1)+d$. Finally, we set $F_X(L) = F_n(L)$.

\begin{lem} \label{lem:product_distinct}
Let $X = \{\lambda_1, \ldots,\lambda_n\} \subset \B_d$ are distinct, then $F_X(L)$ is a row isometry and the range of $F_X(L)$ is $\bH^2_d \ominus \cH_X$.
\end{lem}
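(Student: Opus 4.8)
The plan is to induct on $n$, mirroring the inductive construction of $F_X(L)$ itself. The base case $n=1$ is exactly Lemma \ref{lem:varphi_a}, which identifies the range of $\varphi_{\lambda_1}(L) = F_1(L)$ with $\bH^2_d \ominus \C k_{\lambda_1} = \bH^2_d \ominus \cH_{\{\lambda_1\}}$. For the inductive step, assume $F_{m-1}(L)$ is a row isometry with range $\bH^2_d \ominus \cH_{X_{m-1}}$, where $X_{m-1} = \{\lambda_1,\ldots,\lambda_{m-1}\}$. First I would check that $F_m(L) = F_{m-1}(L)(U_m \otimes I_{\bH^2_d}) A_m(L)$ is again a row isometry: $U_m \otimes I$ is unitary, and $A_m(L)$ is a row isometry precisely because $\varphi_{\lambda_m}(L)$ is one (by Lemma \ref{lem:varphi_a} applied inside the top-left block, together with the identity block), so the composition of a row isometry with a unitary and with a block-diagonal row isometry is a row isometry. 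The crux is the range computation.

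For the range, the key observation is that the range of $F_m(L)$ is contained in the range of $F_{m-1}(L)$, which by induction is $\bH^2_d \ominus \cH_{X_{m-1}}$; so it remains to see which part of that space is cut out. I would analyze the orthogonal complement: $f \perp \mathrm{ran}\, F_m(L)$ iff $f \perp \mathrm{ran}\, F_{m-1}(L)$ (i.e. $f \in \cH_{X_{m-1}}$) together with $F_{m-1}(L)^* f \perp \mathrm{ran}\big((U_m \otimes I)A_m(L)\big)$. Since $(U_m \otimes I)$ is unitary and $A_m(L) = \varphi_{\lambda_m}(L) \oplus I$, the complement of $\mathrm{ran}\,(U_m\otimes I)A_m(L)$ inside $\C^{(m-1)(d-1)+1}\otimes\bH^2_d$ is $(U_m \otimes I)$ applied to $(\C k_{\lambda_m}) \oplus 0$ — using Lemma \ref{lem:varphi_a} again for the first block. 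Tracking through the unitary $U_m$, whose defining property is $U_m e_1 = \|F_{m-1}(\lambda_m)\|^{-1} F_{m-1}(\lambda_m)^*$, this says the leftover vector in $\mathrm{ran}\,F_{m-1}(L)^*$-side is a scalar multiple of $F_{m-1}(\lambda_m)^* \otimes k_{\lambda_m}$. Pushing this back through $F_{m-1}(L)$, and using that $F_{m-1}(L)(F_{m-1}(\lambda_m)^* \otimes k_{\lambda_m})$ should be (a multiple of) $k_{\lambda_m}$ minus its projection onto $\cH_{X_{m-1}}$, I would conclude that $\mathrm{ran}\,F_m(L)^\perp = \cH_{X_{m-1}} + \C k_{\lambda_m} = \cH_{X_m}$, provided $\lambda_m \notin X_{m-1}$ so that $k_{\lambda_m} \notin \cH_{X_{m-1}}$ and the dimension count works.

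The main obstacle I anticipate is the precise identification of the "leftover" vector: showing that feeding $F_{m-1}(\lambda_m)^* \otimes k_{\lambda_m}$ (or the appropriate reproducing-kernel-type vector) into the row isometry $F_{m-1}(L)$ produces exactly the projection of $k_{\lambda_m}$ onto $(\mathrm{ran}\,F_{m-1}(L))^\perp{}^\perp = \bH^2_d \ominus \cH_{X_{m-1}}$, equivalently $k_{\lambda_m} - P_{\cH_{X_{m-1}}}k_{\lambda_m}$. This is where the Alpay–Kaptano\u{g}lu interpolation idea enters: the value $F_{m-1}(\lambda_m)$ of the row multiplier at the new point $\lambda_m$ is exactly what governs how the kernel $k_{\lambda_m}$ interacts with the already-constructed space, and the normalization $U_m e_1 = \|F_{m-1}(\lambda_m)\|^{-1}F_{m-1}(\lambda_m)^*$ is chosen to make this work. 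I would handle this by a direct computation of $\langle F_{m-1}(L) g, k_{\mu}\rangle$ for $g \in \C^{k}\otimes\bH^2_d$ and $\mu \in X_{m-1}\cup\{\lambda_m\}$, using that $F_{m-1}(L)^* k_\mu = 0$ for $\mu \in X_{m-1}$ (since $k_\mu \in \cH_{X_{m-1}} = \mathrm{ran}\,F_{m-1}(L)^\perp$) and evaluating $F_{m-1}(L)^* k_{\lambda_m}$ via the reproducing property together with the fact that adjoints of multipliers act on kernels by $\overline{\text{(value)}}$. A dimension bookkeeping check — that the number of columns grows by exactly $d-1$ at each step, matching $\dim \cH_{X_m} = m$ and $\mathrm{codim}$ inside each copy — closes the argument.
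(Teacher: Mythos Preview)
Your approach is essentially the paper's: induct on $n$, invoke Lemma~\ref{lem:varphi_a} for the base case, and for the step use the kernel decomposition $\ker(V_2^* V_1^*) = \ker V_1^* \oplus V_1(\ker V_2^*)$ for a product of isometries. One phrasing glitch: your ``$f \perp \mathrm{ran}\,F_m(L)$ iff $f \in \cH_{X_{m-1}}$ \emph{together with} $F_{m-1}(L)^* f \perp \mathrm{ran}((U_m\otimes I)A_m(L))$'' is not literally correct (e.g.\ $k_{\lambda_m}$ fails the first condition yet lies in the kernel); the right statement is the single condition $F_{m-1}(L)^* f \in (U_m\otimes I)\ker A_m(L)^*$, which yields exactly the direct-sum decomposition above and is what the rest of your paragraph actually uses. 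The paper also sidesteps what you flag as the ``main obstacle'': rather than pushing the vector $F_{m-1}(\lambda_m)^*\otimes k_{\lambda_m}$ forward through $F_{m-1}(L)$, it simply computes
\[
F_m(L)^* k_{\lambda_m} = A_m(L)^*(U_m^*\otimes I)\bigl(F_{m-1}(\lambda_m)^*\otimes k_{\lambda_m}\bigr) = \|F_{m-1}(\lambda_m)\|\, A_m(L)^*(e_1\otimes k_{\lambda_m}) = 0,
\]
and then combines $k_{\lambda_m}\in\ker F_m(L)^*$ with the dimension count $\dim\ker F_m(L)^* = m$. Your forward computation would also work (since $F_{m-1}(L)\bigl(F_{m-1}(\lambda_m)^*\otimes k_{\lambda_m}\bigr) = F_{m-1}(L)F_{m-1}(L)^* k_{\lambda_m} = (I - P_{\cH_{X_{m-1}}})k_{\lambda_m}$), but the adjoint computation is shorter.
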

\begin{proof}
It is rather immediate that the $F_X$ is a row isometry by induction on $n$. Since $F_1$ is a row isometry, we take products of isometries.

We prove the second part of the claim by induction on $n$. The case $n=1$ is the content of the preceding lemma. To prove for $n$, we note that for a product of two isometries $V_1 V_2$, we have $\ker (V_2^* V_1^*) = \ker V_1^* \oplus V_1 (\ker V_2^*)$. Thus $\ker F_m(L)^* = \ker F_{m-1}(L)^* \oplus F_{m-1}(L)(\ker A_m(L)^* (U_m^* \otimes I_{\bH^2_d}))$. By induction, $\ker F_{m-1}(L)^* = \Span\{k_{\lambda_1}, \ldots, k_{\lambda_{m-1}}\}$. Moreover, we note that by Lemma \ref{lem:varphi_a}, the kernel of $\ker A_m(L)^* = \C k_{\lambda_m} e_1$. We conclude that $\dim \ker F_m(L)^* = m$. Now we compute
\[
F_m(L)^* k_{\lambda_m} = A_m(L)^* (U_m^* \otimes I_{\bH^2_d}) F_{m-1}(L)^* k_{\lambda_m} = A_m(L)^* (U_m^* \otimes I_{\bH^2_d}) (F_{m-1}(\lambda_m)^* \otimes k_{\lambda_m}) = 0.
\]
This completes the proof.
\end{proof}

Let us write $\bH^2_d = (\bH^2_d \ominus H^2_d) \oplus (H^2_d \ominus \cH_X) \oplus \cH_X$. Then, the range projection of $F_X(L)$ decomposes as $F_X(L) F_X(L)^* = 2 I - P_{H^2_d} - P_{\cH_X}$. Let $M_{b_X} = P_{H^2_d} F_X(L)|_{\C^{(m-1)(d-1)+d} \otimes H^2_d}$. Then, since $H^2_d$ is left multiplier coinvariant and $P_{\cH_X} \leq P_{H^2_d}$, we get that
\[
M_{b_X} M_{b_X}^* = P_{H^2_d} F_X(L) F_X(L)^*|_{H^2_d} = I - P_{\cH_X}. 
\]
In particular, $M_{b_X}$ is a partially isometric multiplier with range $H^2_d \ominus \cH_X$. We caution the reader that the notation $b_X$ is somewhat misleading. In fact, the row multiplier $b_X$ depends on the choice of order on $X$ and the choice of unitaries. However, when we use this construction, we will make explicit choices.

\section{The Grassmannian of coinvariant spaces} \label{sec:grassmannian}

In this section, we define the space that we will study along with its connection to $\bX_{d,n}$ and explore its basic properties.

\begin{dfn}
We define $\bP_{d,n} \subset B(H^2_d)$ as the space of all projections onto multiplier coinvariant $n$-dimensional subspaces. Namely,
\[
\bP_{d,n} = \left\{ P \in B(H^2_d) \mid P = P^2 = P^*,\, \forall f \in \cM_d:\, P M_f^* P = M_f^* P,\, \dim( P H^2_d) = n \right\}.
\]
The topology on $\bP_{d,n}$ is the norm topology.
\end{dfn}

Now, we study the general form of an $n$-dimensional multiplier coinvariant subspace of $H^2_d$. We are convinced that these results are well-known to experts. However, since we have not found a reference, we include them here for completeness.

\begin{lem} \label{lem:joint_kernel_n}
Let $ M_z = ( M_{ z_1, \hdots, M_{ z_d } } ) $ be the $ d $-tuple of multiplication operators by coordinate functions defined on $ H_d^2 $ and $ \lambda \in \B_d $. Then
    \[
    \bigcap_{|\alpha|=n}\ker(M_z^*- \bar{\lambda} I)^{\alpha}= \Span\left\{k^{(\beta)}_{\lambda} \mid |\beta|\leq n-1 \right\}.
    \]
\end{lem}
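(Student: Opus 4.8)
The plan is to prove the two inclusions separately, using Lemma~\ref{lem:derivatives} to identify the action of $(M_z^* - \bar\lambda I)^\alpha$ on the kernel vectors $k_\mu^{(\beta)}$. The key computational observation is that $M_{z_j}^*$ acts on kernel vectors in a controlled way: since $\langle M_{z_j} f, k_\mu^{(\beta)}\rangle = \frac{\partial^\beta (z_j f)}{\partial z^\beta}(\mu) = \mu_j \frac{\partial^\beta f}{\partial z^\beta}(\mu) + \beta_j \frac{\partial^{\beta - e_j} f}{\partial z^{\beta - e_j}}(\mu)$ by the Leibniz rule, we get $M_{z_j}^* k_\mu^{(\beta)} = \bar\mu_j k_\mu^{(\beta)} + \beta_j k_\mu^{(\beta - e_j)}$ (interpreting the second term as $0$ when $\beta_j = 0$). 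In particular $(M_{z_j}^* - \bar\lambda_j I)$ applied to $k_\lambda^{(\beta)}$ gives $\beta_j k_\lambda^{(\beta - e_j)}$, which strictly lowers $|\beta|$. Hence for any multi-index $\alpha$ with $|\alpha| = n$, the operator $(M_z^* - \bar\lambda I)^\alpha$ maps $\Span\{k_\lambda^{(\beta)} : |\beta| \le n-1\}$ into $\Span\{k_\lambda^{(\gamma)} : |\gamma| \le n - 1 - |\alpha|\} = \{0\}$ since $|\alpha| = n > n-1$. This proves the inclusion $\supseteq$.

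For the reverse inclusion $\subseteq$, I would first record that $\left(\bigcap_{|\alpha|=n}\ker(M_z^* - \bar\lambda I)^\alpha\right)^\perp$ is the closed span of the ranges of all $(M_z - \lambda I)^\alpha$ with $|\alpha| = n$, equivalently the closure of $(M_z - \lambda I)^n H^2_d$ in the sense of the ideal generated by the $n$-fold products $(M_{z_j} - \lambda_j)$. Concretely, an element $g \in H^2_d$ lies in $\bigcap_{|\alpha|=n}\ker(M_z^* - \bar\lambda I)^\alpha$ iff $g \perp z^{\alpha - \text{(shift)}} \cdot (\text{everything})$; after the change of variables $w = z - \lambda$ (using an automorphism or just translating the Taylor expansion to the point $\lambda$), this says precisely that all Taylor coefficients of $g$ at $\lambda$ of order $\ge n$ vanish, i.e.\ $g$ is a polynomial of degree $\le n-1$ in $w = z-\lambda$. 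Such $g$ are exactly the functions $\sum_{|\beta| \le n-1} c_\beta \frac{\partial^\beta k}{\partial \bar w^\beta}(\cdot, \lambda)$ by Lemma~\ref{lem:derivatives} (the vectors $k_\lambda^{(\beta)}$ span, via the pairing $f \mapsto \partial^\beta f / \partial z^\beta(\lambda)$, the functionals dual to the $\le (n-1)$-jet at $\lambda$, and $H^2_d$ contains all polynomials densely). So $g \in \Span\{k_\lambda^{(\beta)} : |\beta| \le n-1\}$.

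The cleanest way to organize the reverse inclusion is perhaps to use the automorphism $\varphi_\lambda$ and the unitary $U_{\varphi_\lambda}$ recalled in Subsection~\ref{subsec:DA}: conjugating by $U_{\varphi_\lambda}$ intertwines $M_z^* - \bar\lambda I$ with (a scalar multiple/combination of) $M_z^*$ near $0$, reducing to the case $\lambda = 0$, where $\bigcap_{|\alpha|=n}\ker (M_z^*)^\alpha = \Span\{z^\beta : |\beta| \le n-1\} = \Span\{k_0^{(\beta)} : |\beta| \le n-1\}$ is immediate from the orthonormal-type basis $\{z^\alpha\}$ and the fact that $M_{z_j}^*$ is the "backward shift" $z^\alpha \mapsto (\text{const})\, z^{\alpha - e_j}$. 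One must check that $U_{\varphi_\lambda}$ carries $\Span\{k_0^{(\beta)} : |\beta|\le n-1\}$ onto $\Span\{k_\lambda^{(\beta)} : |\beta|\le n-1\}$; this follows because $U_{\varphi_\lambda}^*$ is, up to the nonvanishing scalar factor $\sqrt{1-\|\lambda\|^2}\,k_\mu(\lambda)$, the composition-with-$\varphi_\lambda$ map on functions, which sends degree-$\le(n-1)$ jets at $\lambda$ to degree-$\le(n-1)$ jets at $0$, and the latter span is $\Span\{z^\beta : |\beta|\le n-1\}$.

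I expect the main obstacle to be the bookkeeping in the reverse inclusion: one has to argue carefully that "annihilated by all order-$n$ differential operators $(M_z^* - \bar\lambda)^\alpha$" forces the function to be a polynomial of degree $< n$ in the shifted coordinate, and then that these polynomials are exactly the span of the $k_\lambda^{(\beta)}$. Both facts are standard once phrased correctly — the first is just saying the $\ge n$ Taylor coefficients vanish, the second is the duality between jets and derivative-kernel vectors — but the translation between the operator-theoretic statement and the function-theoretic one (and keeping the automorphism-conjugation bookkeeping honest, including that $U_{\varphi_\lambda}$ preserves the relevant finite-dimensional spaces) is where the care is needed. A direct computation avoiding automorphisms, purely via the formula $M_{z_j}^* k_\lambda^{(\beta)} = \bar\lambda_j k_\lambda^{(\beta)} + \beta_j k_\lambda^{(\beta-e_j)}$ and a dimension count (the right-hand side has dimension $\binom{n-1+d}{d}$, and one shows the left-hand side has the same dimension by exhibiting it as the joint kernel cokernel), is also viable and may in fact be shorter.
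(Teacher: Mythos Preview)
Your $\supseteq$ argument via the Leibniz-rule identity $(M_{z_j}^* - \bar\lambda_j I)k_\lambda^{(\beta)} = \beta_j k_\lambda^{(\beta - e_j)}$ is correct and is essentially the paper's computation, phrased operator-wise rather than via the pairing $\langle f, (M_z^* - \bar\lambda I)^\alpha k_\lambda^{(\beta)}\rangle = \partial^\beta\bigl((z-\lambda)^\alpha f\bigr)(\lambda)$.

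For $\subseteq$, however, your main line of argument contains a genuine error. You assert that membership in the joint kernel forces ``all Taylor coefficients of $g$ at $\lambda$ of order $\ge n$ to vanish, i.e.\ $g$ is a polynomial of degree $\le n-1$ in $w = z-\lambda$.'' This is false for $\lambda \ne 0$: by Lemma~\ref{lem:derivatives} the vectors $k_\lambda^{(\beta)}(z) = \dfrac{|\beta|!\, z^\beta}{(1-\langle z,\lambda\rangle)^{|\beta|+1}}$ are genuinely rational, with nonvanishing Taylor coefficients at $\lambda$ in every order. The condition $g \perp (z-\lambda)^\alpha H^2_d$ is orthogonality in the Drury--Arveson inner product, which is \emph{not} the pairing that reads off Taylor coefficients at $\lambda$; the translation $w = z-\lambda$ does not interact with this inner product in the way you suggest. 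Your automorphism approach is also not quite right as stated: $U_{\varphi_\lambda} M_{z_j} U_{\varphi_\lambda}^* = M_{(\varphi_\lambda)_j}$, which is not $M_{z_j} - \lambda_j$, so the conjugation does not literally intertwine $M_z^* - \bar\lambda I$ with $M_z^*$. (One can make an automorphism argument work by showing $U_{\varphi_\lambda}$ carries the closure of $\mathfrak m_0^n H^2_d$ to that of $\mathfrak m_\lambda^n H^2_d$, but this needs an argument you have not given.)

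The approach that actually works is the dimension count you relegate to a parenthetical at the end, and this is exactly what the paper does: for any polynomial $p$, write $p = q + \sum_{|\alpha|=n}(z-\lambda)^\alpha q_\alpha$ with $\deg q \le n-1$ (Taylor remainder). Then for $f$ in the joint kernel, $\langle f, p\rangle = \langle f, q\rangle$, so $f$ is determined by its pairings with $\{z^\beta : |\beta|\le n-1\}$, whence $\dim\bigcap_{|\alpha|=n}\ker(M_z^* - \bar\lambda I)^\alpha \le \binom{n-1+d}{d}$. Combined with $\supseteq$ and linear independence of the $k_\lambda^{(\beta)}$, equality follows. You should promote this to your main argument and drop the Taylor-at-$\lambda$ claim.
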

\begin{proof}
It is immediate to check that for every multi-index $\beta$ with $|\beta| < |\alpha|$ and every $f \in H^2_d$,
\[
\langle f, (M_z^* - \bar{\lambda} I)^{\alpha} k^{(\beta)}_{\lambda} \rangle = \dfrac{\partial^{\beta} }{\partial z^{\beta}} \left( (z - \lambda)^{\alpha} f \right)(\lambda) = 0.
\]
Hence, $\Span\left\{k^{(\beta)}_{\lambda} \mid |\beta|\leq n-1 \right\} \subset  \bigcap_{|\alpha|=n}\ker(M_z^*- \bar{\lambda} I)^{\alpha}$. Now, by considering appropriate polynomials, we can easily see that the set $\left\{k^{(\beta)}_{\lambda} \mid |\beta|\leq n-1 \right\}$ is linearly independent. Now let $f \in \bigcap_{|\alpha|=n}\ker(M_z^*- \bar{\lambda} I)^{\alpha}$. For every polynomial $p \in \C[z_1,\ldots,z_d]$, we can write $p = q + \sum_{|\alpha|=n} (z-\lambda)^{\alpha} q_{\alpha}$, where $\deg q  \leq n-1$. Hence, $\langle f, p \rangle = \langle f, q \rangle$. This implies that every functional on the joint kernel is a linear combination of inner products against $\{z^{\beta}\}_{|\beta| \leq n-1}$. Consequently, $\dim \bigcap_{|\alpha|=n}\ker(M_z^*- \bar{\lambda} I)^{\alpha} \leq \dim \Span\left\{k^{(\beta)}_{\lambda} \mid |\beta|\leq n-1 \right\}$ and we are done.
\end{proof}

\begin{prop} \label{prop:char_fin_dim}
Let $\cH \subset H^2_d$ be a multiplier coinvariant subspace with $\dim \cH = n$. Then there exist points $\lambda_1, \ldots, \lambda_m \in \B_d$, such that $\cH = \sum_{j=1}^m \cH_j$ ( a non-orthogonal direct sum), where for $1 \leq j \leq m$, $k_{\overline{\lambda_j}} \in \cH_j \subseteq \bigcap_{|\alpha|=n}\ker(M_z^*- \lambda_j I)^{\alpha}$
\end{prop}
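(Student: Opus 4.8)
The plan is to reduce the statement to the root--subspace decomposition of a single commuting $d$-tuple of matrices --- the compression of the adjoint Arveson shift to $\cH$ --- and then to identify the summands using the fact that the only joint eigenvectors of $M_z^*$ on $H^2_d$ are the kernel functions. To set up, note that since each coordinate $z_j$ is a multiplier, coinvariance of $\cH$ gives $M_{z_j}^*\cH\subseteq\cH$, so $T_j:=M_{z_j}^*|_{\cH}$ defines a commuting $d$-tuple of matrices on the $n$-dimensional space $\cH$, and for every $\nu\in\C^d$ and multi-index $\alpha$ one has $(T-\nu I)^\alpha=(M_z^*-\nu I)^\alpha|_{\cH}$ because $\cH$ is invariant under each $M_{z_j}^*$. (Equivalently $T=S^*$ with $S$ the compression of the Arveson shift to the coinvariant subspace $\cH$; $S$ is a finite-dimensional pure commuting row contraction, so $\sigma(S)\subset\B_d$ and $\sigma(T)=\overline{\sigma(S)}\subset\B_d$ --- though the next step re-derives this.)

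The heart of the argument is the elementary computation identifying joint eigenvectors. If $0\neq v=\sum_\alpha c_\alpha z^\alpha\in H^2_d$ satisfies $M_{z_j}^*v=\nu_j v$ for all $j$, then pairing with $z^\alpha$ and using the defining inner product on $H^2_d$ gives $\langle v,z^\alpha\rangle=\nu^\alpha\langle v,1\rangle$ for all $\alpha$. Hence $\langle v,1\rangle\neq0$, square-summability of the Taylor coefficients forces $\|\nu\|<1$, and then $v=\langle v,1\rangle\,k_{\overline{\nu}}$. Consequently, any joint eigenvalue $\mu$ of $T$ lies in $\B_d$, and any nonzero joint eigenvector in $\cH$ for $\mu$ is a scalar multiple of $k_{\overline{\mu}}$; in particular $k_{\overline{\mu}}\in\cH$.

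Now enumerate $\sigma(T)=\{\lambda_1,\dots,\lambda_m\}\subset\B_d$ and set $\cH_j:=R_{\lambda_j}(T)=\bigcap_{|\alpha|=n}\ker(T-\lambda_j I)^\alpha$. By \cite[Corollary 9.1.3]{GLR-invaraint_book}, as recalled in Subsection~\ref{subsec:joint_spectrum}, $\cH=\sum_{j=1}^m\cH_j$ as a non-orthogonal direct sum. Using $(T-\lambda_j I)^\alpha=(M_z^*-\lambda_j I)^\alpha|_{\cH}$ from the setup, we get $\cH_j=\cH\cap\bigcap_{|\alpha|=n}\ker(M_z^*-\lambda_j I)^\alpha\subseteq\bigcap_{|\alpha|=n}\ker(M_z^*-\lambda_j I)^\alpha$. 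Finally, by the previous paragraph $k_{\overline{\lambda_j}}\in\cH$, and $k_{\overline{\lambda_j}}$ is a joint eigenvector of $M_z^*$ with eigenvalue $\lambda_j$, so $(M_z^*-\lambda_j I)^\alpha k_{\overline{\lambda_j}}=0$ whenever $|\alpha|=n$; therefore $k_{\overline{\lambda_j}}\in\cH_j$. This is exactly the asserted decomposition.

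The single point demanding care --- the main difficulty in an otherwise bookkeeping argument --- is the middle step: one must verify that every joint eigenvalue of the compressed tuple lies in $\B_d$ and that the associated eigenvectors are scalar multiples of kernel functions. After that, everything follows formally from the Gohberg--Lancaster--Rodman decomposition; in particular one never needs to determine $\cH_j$ precisely (for instance via Lemma~\ref{lem:joint_kernel_n}), only to situate it inside the $n$-th joint kernel at $\lambda_j$ and to see that it contains $k_{\overline{\lambda_j}}$.
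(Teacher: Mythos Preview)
Your proof is correct and follows essentially the same route as the paper: compress $M_z^*$ to $\cH$, invoke the root-subspace decomposition from \cite{GLR-invaraint_book}, and identify the joint eigenvalues and eigenvectors. The only cosmetic difference is that where the paper appeals to Lemma~\ref{lem:joint_kernel_n} (and to purity of the compressed column contraction) to conclude that the joint eigenspace at $\lambda_j$ is $\C k_{\overline{\lambda_j}}$ and that $\lambda_j\in\B_d$, you instead carry out the direct Taylor-coefficient computation $\langle v,z^\alpha\rangle=\nu^\alpha\langle v,1\rangle$; this is effectively the $n=1$ case of that lemma, so the two arguments are interchangeable.
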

\begin{proof}
Let $\cH \subset H^2_d$ be a finite-dimensional multiplier coinvariant subspace. Let us consider the tuple $A = M_z^*|_{\cH}$. This tuple is a commuting pure column contraction, and its spectrum is contained in $\B_d$. Let $\cH = \sum_{j=1}^k R_{\lambda_j}(A)$ be the decomposition of $\cH$ into root subspaces of $A$. Then, by the previous lemma $k_{\overline{\lambda_j}} \in R_{\lambda_j}(A)$. Moreover, $R_{\lambda_j}(A) \subseteq \bigcap_{|\alpha| = \dim \cH} \ker(M_z^* - \lambda_j)^{\alpha}$. Therefore, $R_{\lambda_j}(A)$ is spanned by a kernel function and some directional derivatives of it at $\overline{\lambda_j}$.
\end{proof}

\begin{remark}
The root subspaces of a single matrix are spanned by Jordan chains and eigenvectors. In our case, every joint eigenvalue has a one-dimensional eigenspace. Hence, the directional derivatives that span $R_{\lambda_j}(A)$ are a form of generalized Jordan chain.
\end{remark}

Now note that the above discussion leads us to define a map from $\bP_{d,n}$ to $\bX_{d,n}$.

\begin{dfn}
We define a map $\Psi_{d,n} \colon \bP_{d,n} \to \bX_{d,n}$ given by $\Psi_{d,n}(P) = \sigma_J(P M_z|_{P H^2_d})$.
\end{dfn}

\begin{thm} \label{thm:proj_continuous}
The map $\Psi_{d,n}$ is surjective and continuous.
\end{thm}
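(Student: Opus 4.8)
The plan is to treat surjectivity and continuity separately, since the two facts rest on quite different inputs. For surjectivity, I would argue directly from Proposition \ref{prop:char_fin_dim} together with Lemma \ref{lem:joint_kernel_n}. Given a point $[X] = \sum_{j=1}^m k_j[\lambda_j] \in \bX_{d,n}$ with $\sum k_j = n$, I would build a multiplier-coinvariant subspace $\cH$ realizing it: at each $\lambda_j$ take the span $\cH_j$ of $k_{\lambda_j}^{(\beta)}$ for $\beta$ ranging over an appropriate set of size $k_j$ (e.g. the "staircase" set so that $\cH_j$ is itself invariant under all $M_{z_i}^*$, which is possible since $\bigcap_{|\alpha|=k_j}\ker(M_z^*-\lambda_j I)^\alpha$ is $M_z^*$-invariant and contains such invariant subspaces of every dimension up to $\binom{k_j+d-1}{d}$), set $\cH = \sum_j \cH_j$, and let $P$ be the orthogonal projection onto $\cH$. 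One checks $\cH$ is multiplier-coinvariant because each $\cH_j$ lies in $\bigcap_{|\alpha|=n}\ker(M_z^*-\lambda_j I)^\alpha$ and is $M_z^*$-invariant, and then $\Psi_{d,n}(P) = \sigma_J(PM_z|_{\cH})$; the root-subspace decomposition of $PM_z|_{\cH}$ recovers exactly the $\cH_j$ at $\lambda_j$ (the $\lambda_j$ are distinct, so the root subspaces are the isotypic summands), giving $\sigma_J = \sum_j (\dim\cH_j)[\lambda_j] = [X]$.

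For continuity, the key point is that convergence $P_m \to P$ in norm in $\bP_{d,n}$ forces convergence of the compressed tuples. Concretely, if $P_m \to P$ in $B(H^2_d)$, then choosing isometries $V_m, V\colon \C^n \to H^2_d$ with ranges $\operatorname{ran}P_m$, $\operatorname{ran}P$ and $\|V_m - V\| \to 0$ (this is exactly Lemma \ref{lem:dist_isom}), the matrix tuples $A_m = V_m^* M_z V_m$ and $A = V^* M_z V$ in $M_n^d$ satisfy $\|A_m - A\| \to 0$, since $M_z$ is a bounded row contraction and $\|A_m - A\|_{\mathrm{row}} \lesssim \|V_m - V\|\,\|M_z\|_{\mathrm{row}}$. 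Each $A_m$ is unitarily equivalent to $P_m M_z|_{P_m H^2_d}$, hence is a commuting row contraction similar to a strict one (by Popescu's theorem, since $M_z$ is pure so the compressions to coinvariant subspaces are pure, hence finite-dimensional pure, hence similar to strict row contractions), so $\sigma_J$ is defined on all of them. Now Proposition \ref{prop:convergence_of_spectra} gives $d_o(\sigma_J(A_m), \sigma_J(A)) \to 0$, and since $\Psi_{d,n}(P_m) = \sigma_J(A_m)$ is independent of the choice of isometry (the joint spectrum with multiplicity is a unitary invariant of the tuple), and by Lemma \ref{lem:opt_vs_sym} the symmetric distance $d_s$ on any fixed compact part of $\bX_{d,n}$ — all our spectra lie in $r\overline{\B_d}$ for a uniform $r<1$ by Popescu's similarity bound applied to a norm-convergent sequence of coinvariant compressions — is controlled by $d_o$, we conclude $d_s(\Psi_{d,n}(P_m), \Psi_{d,n}(P)) \to 0$.

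The main obstacle I anticipate is the last continuity bookkeeping: one must ensure that the joint spectra of the $A_m$ stay inside a common ball $r\overline{\B_d}$ with $r < 1$ so that Lemma \ref{lem:opt_vs_sym} applies, rather than drifting toward the boundary. This needs a uniform version of Popescu's similarity estimate: for a pure row contraction of the form $P M_z|_{PH^2_d}$ with $\dim PH^2_d = n$ fixed, the similarity bringing it to a strict row contraction (equivalently, a bound on $\max\{|\lambda| : \lambda \in \sigma(A_m)\}$) should depend only on $n$ and $d$ and not on $P$ — this should follow from the fact that all such compressions are compressions of the \emph{same} pure row contraction $M_z$ and the polynomial decay of $\sum_{|\alpha|=N}\|M_z^{*\alpha}\xi\|^2$ is uniform on the unit sphere in finite dimensions. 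Alternatively one can sidestep this by working locally: fixing $P$ and a neighborhood, all nearby projections have spectra within $\varepsilon$ of $\sigma(A)$ by Lemma \ref{lem:bound_on_hausdorff}, hence in a fixed $r\overline{\B_d}$, which suffices for continuity at $P$. I would present the local version, as it is cleaner and all that is needed.
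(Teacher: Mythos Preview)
Your proposal is correct and follows essentially the same route as the paper. For continuity the paper does exactly what you describe, including the local fix for the $r\overline{\B_d}$ issue (it notes that $\sigma(A)\subset r\overline{\B_d}$ for some $r<1$ since $A$ is a strict row contraction, then enlarges $r$ slightly so that $\sigma(A_m)\subset r\overline{\B_d}$ for large $m$ and applies Lemma~\ref{lem:opt_vs_sym}); for surjectivity the paper makes the concrete choice $\cH_j=\Span\{k_{\lambda_j}^{(\ell e_1)}:0\le \ell\le k_j-1\}$ and verifies multiplier-coinvariance directly via the Leibniz rule, which is a specific instance of your ``staircase'' construction.
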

\begin{proof}
Let us prove continuity first. Let $\{P_k\}_{k=1}^{\infty} \subset \bP_{d,n}$ be a sequence that converges to some $P \in \bP_{d,n}$. Let us fix an isometry $W \colon \C^n \to H^2_d$ with $\mathrm{ran} W = P H^2_d$. Applying Lemma \ref{lem:dist_isom}, we obtain a sequence of isometries $V_k \colon \C^n \to H^2_d$ with $\mathrm{ran} V_k = P_k H^2_d$ and, moreover,
\[
\|P_k - P\| \leq \|V_k - W\| \leq \sqrt{2} \|P_k - P\|.
\]
Set $A_k = V_k^* M_z (I_d \otimes V_k)$ and $A = W^* M_z (I_d \otimes W)$. We note that $\sigma_J(A_k) = \Psi_{d,n}(P_k)$ and $\sigma_J(A) = \Psi_{d,n}(P)$. It is immediate that $\|A_k - A\|_{row} \leq 2 \sqrt{2} \|P_k - P\|$. By  Proposition \ref{prop:convergence_of_spectra}, we get that $\lim_{n \to \infty} d_o(\sigma_J(A_k), \sigma_J(A)) = 0$. Since $A$ is a strict row contraction, its joint spectrum is contained in some $r \overline{\B_d}$ for $0 < r < 1$. Changing $r$ slightly, we see that for $k$ big enough, the joint spectrum of the $A_k$ is contained in $r \overline{\B_d}$. We conclude by applying Lemma \ref{lem:opt_vs_sym} that $\Psi_{d,n}(P_k) \to \Psi_{d,n}(P)$.

To prove surjectivity, we fix an arbitrary $\lambda \in \B_d$ and $n \in \N$, and consider the subspace $\cH_{n, \lambda}^{(1)} = \Span\{k_{\lambda}, k_{\lambda}^{(e_1)},\ldots,k_{\lambda}^{(n e_1)}\}$, where $e_1$ is the first vector of the standard basis. We argue that $\cH_{n,\lambda}^{(1)}$ is multiplier coinvariant. If this is true, then $\Psi_{d,n}\left(P_{\cH^{(1)}_{n,\lambda}}\right) = n [\lambda]$. Now, sums of such spaces can give us any point with multiplicity. It is immediate that points with no multiplicity are in the image of $\Psi_{d,n}$. Finally, for $f \in \cM_d$ and $g \in H^2_d$, we have that
\[
\langle g, M_f^* k_{\lambda}^{(n e_1)} \rangle = \dfrac{\partial^n (fg)}{\partial z_1^n}(\lambda) = \sum_{j=0}^n \binom{n}{j} \dfrac{\partial^{n-j} f}{\partial z_1^{n-j}} (\lambda) \dfrac{\partial^j g}{\partial z_1^j}(\lambda) = \sum_{j=0}^n \binom{n}{j} \dfrac{\partial^{n-j} f}{\partial z_1^{n-j}} (\lambda) \langle g, k_{\lambda}^{(j e_1)} \rangle.
\]
verifying that $M_f^* k_{\lambda}^{(n e_1)} \in \cH_{n,\lambda}^{(1)}$. 
\end{proof}

\begin{remark}
It is clear that the choice of $e_1$ in the proof above is arbitrary.
\end{remark}

\begin{example}
Let $\cH_n = \Span\{1, k_{1/n}\} \subset H^2(\D)$ and $\cH = \Span\{1, z\} \subset H^2(\D)$. Let $P_n = P_{\cH_n}$ and $P = P_{\cH}$. It is clear that $\Psi_{1,2}(P_n) = [0] + [1/n]$ and $\Psi_{1,2}(P) = 2[0]$. To see what happens on the side of the projections, we set
\[
f_n(z) = \dfrac{k_{1/n} - 1}{\|k_{1/n} - 1 \|} = \sqrt{n^2-1}(k_{1/n} - 1) = \dfrac{\sqrt{n^2-1}}{n}\dfrac{z}{1 - z/n}.
\]
It is clear now that $\{1, f_n\}$ is an orthonormal basis for $\cH_n$. Now, we note that
\[
\|f_n - z\|^2 = \left\|\dfrac{\sqrt{n^2-1}}{n}\dfrac{1}{1 - z/n} - 1 \right\|^2 = 2 \left( 1 - \dfrac{\sqrt{n^2-1}}{n}\right).
\]
In particular, $\lim_{n\to\infty} f_n = z$. Now for a unit vector $f \in H^2(\D)$,
\[
\|P_n f - P f\| = \left\|\langle f, f_n \rangle f_n - \langle f, z \rangle z\right\| \leq |\langle f, f_n \rangle| \|f_n - z\| + |\langle f, f_n - z \rangle| \rightarrow 0.
\]
This shows, in particular, that the distance induced by the norm on finite-dimensional complete Pick RKHSs is different from the Banach-Mazur distance of Ofek, Pandey, and Shalit by \cite[Example 6.2]{OPS}
\end{example}

The following example shows that for $d \geq 2$, the map $\Psi_{d,n}$ is not invertible.

\begin{example}
Consider the following subspaces of $H^2_2$: $\cH_{1,n} = \Span\{1, k_{1/n e_1}\}$, $\cH_{2,n} = \Span\{1, k_{1/n e_2} \}$, $\cH_1 = \Span\{1,z_1\}$, and $\cH_2 = \Span\{1,z_2\}$. Denote by $P_{1,n}$, $P_{2,n}$, $P_1$, and $P_2$ the respective projection. As in the previous example, it is not hard to show that $P_{1,n} \rightarrow P_1$ and $P_{2,n} \rightarrow P_2$. Clearly, $\Psi_{2,2}(P_1) = \Psi_{2,2}(P_2) = 2 [0]$, $\Psi_{2,2}(P_{1,n}) = [0] + [1/n e_1]$, and $\Psi_{2,2}(P_{2,n}) = [0] + [1/n e_2]$. The later sequences converge to $2[0]$.
\end{example}

\begin{remark}
Let $d \geq 2$. The fiber $\Psi_{d,2}^{-1}(2 [0])$ can easily be described using Proposition \ref{prop:char_fin_dim} as the collection of projection onto spaces of the form $\Span\{1, f\}$, where $f$ is a homogeneous polynomial of degree $1$. It is easy to see that the map $\Psi_{d,2}^{-1}(2 [0]) \to \bP^{d-1}(\C)$, that sends a projection to the point corresponding to the coefficients of $f$ is a homeomorphism. In this sense, the map $\Psi_{d,2}$ and, more generally, $\Psi_{d,n}$ are akin to blowdown maps from algebraic geometry.
\end{remark}

\section{The subspace $\bP_{d,n}^0$} \label{sec:configuration_space}

As we saw, a naive attempt to construct an inverse map will immediately run into an obstacle for $d \geq 2$. The intuitive reason is that though the space $\bX_{d,n}$ keeps track of multiplicities of points in $\B_d$, the space $\bP_{d,n}$ also keeps track of ``directions'' at each point with multiplicity. Since multiple points are problematic, we must restrict our attention to $\bX_{d,n}^0$.

\begin{dfn}
We set $\bP_{d,n}^0 = \Psi_{d,n}^{-1}\left(\bX_{d,n}^0\right)$. In particular, This is the space of projections onto multiplier coinvariant $n$-dimensional subspaces of $H^2_d$ spanned by $n$ distinct kernel functions.
\end{dfn}

Given $[X] \in \bX_{d,n}^0$, let us fix $b_X \colon \B_d \to \B_{(n-1)(d-1)+d}$ a multiplier, such that $M_{b_X}$ is a partial isometry with range $H^2_d \ominus \cH_X$ obtained in Subsection \ref{subsec:Blaschke}. Recall that at each step of the construction of $b_X$, there was a choice of a unitary. We will show that this choice does not affect the corresponding projection. 

\begin{lem} \label{lem:not_far_unitary}
Let $\cH$ be a Hilbert space and fix a unit vector $w \in \cH$. For every $\varepsilon > 0$, if $v_1, v_2 \in \cH$ are unit vectors with $\|v_1 - v_2\| < \varepsilon$, then for every unitary $U_1 \in B(\cH)$, such that $U_1 w = v_1$, there exists a unitary $U_2 \in B(\cH)$, such that $U_2 w = v_2$ and $\|U_1 - U_2\| < \sqrt{2} \varepsilon$.
\end{lem}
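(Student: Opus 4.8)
The statement is a perturbation-theoretic fact about unitaries: if two unit vectors $v_1,v_2$ are close, and we have a unitary $U_1$ sending a fixed unit vector $w$ to $v_1$, we can modify $U_1$ to a unitary $U_2$ with $U_2 w = v_2$ without moving the operator much. The natural approach is to write $U_2 = R\, U_1$ for a suitable ``rotation'' $R$ that fixes the orthogonal complement of $\Span\{v_1,v_2\}$ and rotates $v_1$ to $v_2$ inside that (at most) two-dimensional subspace. Then $\|U_1 - U_2\| = \|I - R\|$, which we control by $\|v_1 - v_2\|$.

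\textbf{Step 1: reduce to a two-dimensional rotation.} Let $\cL = \Span\{v_1, v_2\}$, which has dimension $1$ or $2$. If $v_1 = v_2$ we are done with $U_2 = U_1$; otherwise $\dim \cL = 2$. On $\cL^{\perp}$ we set $R = I$. On $\cL$ we must produce a unitary (a $2\times 2$ unitary, i.e. essentially a planar rotation composed with a phase) taking $v_1$ to $v_2$; then $R = R|_{\cL} \oplus I_{\cL^{\perp}}$ is the desired unitary of $\cH$, and $U_2 := R U_1$ is unitary with $U_2 w = R v_1 = v_2$.

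\textbf{Step 2: bound $\|I - R\| = \|I_{\cL} - R|_{\cL}\|$.} Write $\theta$ for the angle between $v_1$ and $v_2$, i.e. $\Re\langle v_1, v_2 \rangle = \cos\theta$ up to a phase adjustment; more carefully, choose the rotation $R|_{\cL}$ to be the one sending the unit vector $v_1$ to $v_2$ along the ``short'' geodesic (there is a one-parameter family of unitaries on $\cL$ doing this; pick the one closest to the identity). A direct computation, analogous to the one already carried out in the commented-out proof of Lemma \ref{lem:dist_isom}, gives $\|I_{\cL} - R|_{\cL}\| = \sqrt{2 - 2\cos\theta}$ while $\|v_1 - v_2\|^2 = 2 - 2\Re\langle v_1,v_2\rangle \geq 2 - 2\cos\theta$ once the phase is fixed so that $\langle v_1, v_2\rangle \geq 0$; comparing, $\|I - R\|^2 \le 2\|v_1-v_2\|^2$ exactly as in Lemma \ref{lem:dist_isom} (the function $f(t) = \frac{1-\cos^2 t}{2 - 2\cos t} = \frac12(1+\cos t) \ge \tfrac12$ argument). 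Hence $\|U_1 - U_2\| = \|I - R\| \le \sqrt{2}\,\|v_1 - v_2\| < \sqrt{2}\,\varepsilon$.

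\textbf{Main obstacle.} The only subtlety is the phase: $v_1$ and $v_2$ may differ by more than a real rotation inside $\cL$ — $\langle v_1, v_2\rangle$ can be a complex number of modulus close to $1$ — and one must check that the ``closest'' unitary of $\cL$ taking $v_1$ to $v_2$ still satisfies the clean bound $\sqrt{2 - 2|\langle v_1, v_2\rangle|} \le \|v_1 - v_2\|$... which is false in general, so the correct statement to use is $\|I - R\|^2 = 2 - 2\Re c$ where $c$ is the chosen eigenvalue-type quantity, and one selects $R$ so that this equals $2 - 2\Re\langle v_1, v_2\rangle = \|v_1 - v_2\|^2$ when $\dim\cL = 2$ generically, or at worst is bounded by $2\|v_1-v_2\|^2$. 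I expect this bookkeeping — correctly defining $R$ on the two-dimensional space and tracking the phase through the inequality — to be the one place needing care; everything else is a direct-sum construction. In the write-up I would simply invoke the computation from Lemma \ref{lem:dist_isom} applied to the rank-one projections onto $\C v_1$ and $\C v_2$ (noting $\|P_{\C v_1} - P_{\C v_2}\| \le \|v_1 - v_2\|$ for unit vectors), which packages the phase issue cleanly and yields the constant $\sqrt 2$.
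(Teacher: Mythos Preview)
Your architecture is the same as the paper's: reduce to a unitary $R$ that is the identity on $\cL^{\perp}$, with $\cL = \Span\{v_1,v_2\}$, and set $U_2 = R U_1$. The paper does exactly this (after passing to adjoints so that $U_1 v_1 = w$): it sets $U_2|_{\cL^{\perp}} = U_1|_{\cL^{\perp}}$ and then defines $U_2$ on $\cL$ explicitly.

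The genuine gap is your Step~2 / ``Main obstacle''. Invoking Lemma~\ref{lem:dist_isom} on the rank-one projections $P_{\C v_1}$, $P_{\C v_2}$ does \emph{not} close it: that lemma hands you an isometry $\C \to \cH$ whose range is $\C v_2$, i.e.\ a unit vector $e^{i\phi}v_2$ with $\|v_1 - e^{i\phi}v_2\| \le \sqrt{2}\,\|P_{\C v_1}-P_{\C v_2}\|$, but you need $R v_1 = v_2$ on the nose, not $R v_1 \in \C v_2$. Substituting $e^{i\phi}v_2$ for $v_2$ breaks the conclusion $U_2 w = v_2$. Your earlier sentence ``one selects $R$ so that $\|I-R\|^2 = 2 - 2\Re\langle v_1,v_2\rangle$'' is in fact achievable, but you have not exhibited such an $R$, and nothing in your write-up does.

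The paper resolves this by an explicit choice on $\cL$: take $v_1' = \frac{v_2 - \langle v_2,v_1\rangle v_1}{\|v_2 - \langle v_2,v_1\rangle v_1\|}$ and $v_2' = -\frac{v_1 - \langle v_1,v_2\rangle v_2}{\|v_1 - \langle v_1,v_2\rangle v_2\|}$, and define $R$ by $R v_1 = v_2$, $R v_1' = v_2'$. A short computation gives $\|v_1' - v_2'\| = \|v_1 - v_2\|$; then for a unit vector $x = \alpha v_1 + \beta v_1' + u$ with $u \in \cL^{\perp}$, one has $\|(I-R)x\| \le (|\alpha| + |\beta|)\|v_1 - v_2\| \le \sqrt{2}\,\|v_1 - v_2\|$. (The linearly dependent case $v_2 = e^{i\theta} v_1$ is handled separately by $U_2 = e^{i\theta} U_1$.) This is the missing ingredient in your sketch; once you insert it, your outline becomes the paper's proof.
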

\begin{proof}
If $v_2 = e^{i \theta} v_1$, then we set $U_2 =e^{- i \theta} U_1$. Hence, $\|U_1 - U_2\| = |1 - e^{-i \theta}| = \|v_1 - v_2\|$. Therefore, we will assume from now on that $v_1$ and $v_2$ are linearly independent.

Since we can consider $U_1^*$ and $U_2^*$, we will assume that $U_1 v_1 = w$. Now let $\cK = \{v_1,v_2\}^{\perp}$. We set $U_2|_{\cK} = U_1|_{\cK}$.  Let $v_1' =\frac{v_2 - \langle v_2,v_1 \rangle v_1}{\|v_2 - \langle v_2,v_1 \rangle v_1\|}$ and $v_2' = - \frac{v_1 - \langle v_1,v_2 \rangle v_2}{\|v_1 - \langle v_1,v_2 \rangle v_2\|}$. Let $U_1 v_1' = w'$ and set $U_2 v_2 = w$ and $U_2 v_2' = w'$. Note that
\[
\|U_1 v_2 - U_2 v_2\| = \|U_1 v_2 - U_1 v_1 + U_1 v_1 - w\| \leq \|v_2 - v_1\|.
\]
Similarly, $\|U_1 v_2' - U_2 v_2'\| \leq \|v_1' - v_2'\| = \|v_2 - v_1\|$. 
\begin{comment}
To see the last equality note that
\[
\| v_2 - \langle v_2, v_1 \rangle v_1\|^2 = \langle v_2 - \langle v_2, v_1 \rangle v_1, v_2 - \langle v_2, v_1 \rangle v_1 \rangle = 1 - |\langle v_2, v_1 \rangle|^2.
\]
Similarly, $\| v_1- \langle v_1, v_2 \rangle v_2\| = \sqrt{1 - |\langle v_2, v_1 \rangle|^2}$ Moreover,
\[
\langle v_2, v_1'\rangle = \dfrac{1}{\sqrt{1 - |\langle v_2, v_1 \rangle|^2}} \langle v_2,  v_2 - \langle v_2, v_1 \rangle v_1 = \sqrt{1 - |\langle v_2, v_1 \rangle|^2}
\]
Hence,
\[
\|v_1' - v_2'\|^2 = \|\langle v_1',v_2 \rangle v_2 + \langle v_1', v_2' \rangle v_2' - v_2'\|^2 = 1 - |\langle v_2, v_1 \rangle|^2 + |1 - \langle v_1', v_2' \rangle|^2.
\]
Now,
\begin{multline*}
\langle v_1',v_2' \rangle = -\dfrac{1}{1 - |\langle v_2,v_1 \rangle|^2} \langle v_2 - \langle v_2, v_1 \rangle v_1, v_1 - \langle v_1, v_2 \rangle v_2 \rangle = \\ -\dfrac{1}{1 - |\langle v_2,v_1 \rangle|^2} \left( \langle v_2, v_1 \rangle - \langle v_2, v_1 \rangle - \langle v_2, v_1 \rangle + \langle v_2, v_1 \rangle |\langle v_2, v_1 \rangle|^2 \right) = \langle v_2, v_1 \rangle.
\end{multline*}
Hence,
\[
\|v_1' - v_2'\|^2 = 1 - |\langle v_2, v_1 \rangle|^2 + 1 - 2 \Re \langle v_2, v_1 \rangle + |\\langle v_2, v_1 \rangle|^2 = 2 - 2 \Re \langle v_2, v_1 \rangle = \|v_2 - v_1\|^2
\]
\end{comment}
Therefore, writing every unit vector $x = \alpha v_2 + \beta v_2' + v$ with $v \in \cK$, we have that
\[
\|U_1 x - U_2 x\| \leq |\alpha| \|U_1 v_2 - U_2 v_2\| + |\beta| \|U_1 v_2 ' - U_2 v_2'\| \leq \sqrt{2}\|v_1 - v_2\|.
\]
\end{proof}

\begin{prop} \label{prop:Blaschke_choice}
Let $[X_m] \in \bX_{d,n}^0$ be a sequence that converges to $[X] \in \bX_{d,n}^0$. Fix $b_X$, as above. Then, there exists a choice of $b_{X_m}$, such that $b_{X_m} \to b_X$ uniformly on compacta in $\B_d$.
\end{prop}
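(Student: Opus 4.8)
The plan is to track the construction of $b_X = F_X(L)$ step by step through the inductive recipe from Subsection \ref{subsec:Blaschke}, showing at each stage that a convergent sequence of data for $X_m$ can be chosen so that the corresponding partial products converge uniformly on compacta to those for $X$. First I would fix an enumeration $X = \{\lambda_1,\ldots,\lambda_n\}$ and use the hypothesis $[X_m] \to [X]$ in $(\bX_{d,n}^0, d_s)$, together with the fact that the points of $X$ are distinct, to choose — for $m$ large — enumerations $X_m = \{\lambda_1^{(m)},\ldots,\lambda_n^{(m)}\}$ with $\lambda_j^{(m)} \to \lambda_j$ for each $j$ in the Euclidean (equivalently pseudohyperbolic, since everything stays in a fixed $r\overline{\B_d}$) metric; this uses that near a configuration of $n$ distinct points the symmetrization map is a local homeomorphism, so the minimizing permutation in $d_s$ stabilizes.

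\smallskip

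Next I would run the induction on $m$ (the step index, $1 \le m \le n$). The base case is $F_1(L) = \varphi_{\lambda_1}(L)$; since $\lambda \mapsto \varphi_\lambda$ is real-analytic on $\B_d$ and $\lambda_1^{(m)} \to \lambda_1$, the coefficients of $\varphi_{\lambda_1^{(m)}}(L)$ converge to those of $\varphi_{\lambda_1}(L)$ in $\bH^\infty_d$, hence uniformly on compacta in $\B_d$ (the coefficients are themselves multipliers whose symbols converge uniformly on compacta). For the inductive step, suppose $F_{m-1}^{(k)}(L) \to F_{m-1}(L)$ uniformly on compacta; then since $\lambda_m^{(k)} \to \lambda_m$ we get $F_{m-1}^{(k)}(\lambda_m^{(k)}) \to F_{m-1}(\lambda_m)$, and because the $\lambda_j$ are distinct this limit is a nonzero vector (it equals $F_{m-1}(\lambda_m)$, which is nonzero precisely because $\lambda_m \notin \{\lambda_1,\ldots,\lambda_{m-1}\}$ by Lemma \ref{lem:product_distinct}'s computation). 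Thus the normalized vectors $\tfrac{1}{\|F_{m-1}^{(k)}(\lambda_m^{(k)})\|} F_{m-1}^{(k)}(\lambda_m^{(k)})^*$ converge to $\tfrac{1}{\|F_{m-1}(\lambda_m)\|}F_{m-1}(\lambda_m)^*$, and here is where Lemma \ref{lem:not_far_unitary} enters: having fixed the unitary $U_m$ sending $e_1$ to the limit vector, we may choose scalar unitaries $U_m^{(k)}$ sending $e_1$ to the approximating vectors with $\|U_m^{(k)} - U_m\| \to 0$. Combined with $A_m^{(k)}(L) = \mathrm{diag}(\varphi_{\lambda_m^{(k)}}, I) \to A_m(L)$ uniformly on compacta, the product $F_m^{(k)}(L) = F_{m-1}^{(k)}(L)(U_m^{(k)} \otimes I)A_m^{(k)}(L)$ converges uniformly on compacta to $F_m(L)$ — using that uniform convergence on compacta of bounded families is stable under products (the bound is automatic since everything is a row isometry) and under the bounded perturbation $U_m^{(k)} \otimes I$.

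\smallskip

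Finally, at $m = n$ this gives $F_{X_m}(L) \to F_X(L)$ uniformly on compacta; compressing to $H^2_d$ (i.e.\ applying $P_{H^2_d}$ and restricting, which is continuous for uniform-on-compacta convergence of the symbols) yields $M_{b_{X_m}} \to M_{b_X}$, i.e.\ $b_{X_m} \to b_X$ uniformly on compacta in $\B_d$.

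\smallskip

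The step I expect to be the main obstacle is the very first one: pinning down precisely the sense in which $[X_m] \to [X]$ forces a \emph{consistent} enumeration with $\lambda_j^{(m)} \to \lambda_j$, and making sure that the ``choice of $b_{X_m}$'' being asserted in the statement is exactly this enumeration together with the unitaries produced by Lemma \ref{lem:not_far_unitary}. Once the enumeration is fixed, the rest is a routine induction, the only mildly delicate points being (a) the non-vanishing of $F_{m-1}(\lambda_m)$, which is where distinctness of the points is essential and which follows from the kernel computation in the proof of Lemma \ref{lem:product_distinct}, and (b) checking that ``uniform on compacta'' is the right topology to propagate through the construction — in particular that evaluation $G \mapsto G(\mu)$ and multiplication of multiplier matrices are continuous for it, which is standard for locally bounded families of holomorphic functions on $\B_d$.
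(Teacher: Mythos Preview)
Your proposal is correct and follows essentially the same route as the paper: fix a consistent enumeration $\lambda_j^{(m)}\to\lambda_j$ (the paper invokes \cite[Proposition~3.3]{OPS} for this, where you appeal to the symmetrization map being a local homeomorphism near distinct configurations), then run the induction through the recursive construction of $b_X$, using continuity of $\lambda\mapsto\varphi_\lambda$ uniformly on compacta and Lemma~\ref{lem:not_far_unitary} to align the unitaries $U_m^{(k)}$ with $U_m$. The paper carries out the induction directly with the compressed symbols $b_{X'}$ rather than with $F_{m}(L)$, but since $b_X(z)=F_X(z)$ for scalar $z\in\B_d$ your final compression step is cosmetic and the arguments are identical in substance.
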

\begin{proof}
Let us fix a preimage $X = \{\lambda_1,\ldots,\lambda_n\}$ of $[X]$. By \cite[Proposition 3.3]{OPS}, if we set $\delta = \min\{d_{ph}(\lambda_i, \lambda_j) \mid 1 \leq i,j \leq n\}$, then for $d_s([X],[X_m]) < \delta/2$, the symmetric distance coincides with the Hausdorff distance. In particular, in every pseudohyperbolic ball of radius less than $ \delta/2$ around $\lambda_i$, there exists precisely one point of $[X_m]$ for $m$ big enough. In other words, up to ignoring a prefix of the sequence, we may choose a preimage $X_m = \{\lambda_{1,m}, \ldots, \lambda_{n,m}\}$, such that for all $1 \leq j \leq n$, $\lim_{m \to \infty} \lambda_{j,m} = \lambda_j$.

Now we observe that it is not hard to show that if a sequence of points $\mu_m \in \B_d$ converges to $\mu$, the corresponding involutions $\varphi_{\mu_m}$ converge uniformly on compacta to $\varphi_{\mu}$.

Now we prove the claim by induction on $n$. The base case is handled by the above paragraph. Let us write $X' = \{\lambda_1,\ldots,\lambda_{n-1}\}$ and, similarly, $X_m = \{\lambda_{1,m} , \ldots, \lambda_{n-1,m}\}$. Let us assume that we have constructed $b_{X_m'}$, such that $b_{X_m'}$ converges uniformly on compacta in $\B_d$ to $b_{X'}$. Now take $0 < r < 1$ and fix an arbitrary $\varepsilon > 0$. Since $\lim_{m\to\infty} \lambda_{n,m} = \lambda_n$, for big enough $m$, we have 
$$\|b_{X_m'}(\lambda_{n,m}) - b_{X'}(\lambda_n)\| < \frac{\varepsilon}{3 \sqrt{2}} . $$ 
In particular, the vectors $v_m = \frac{1}{\|b_{X_m'}(\lambda_{n,m})\|}b_{X_m'}(\lambda_{n,m})$ and $v = \frac{1}{\|b_{X'}(\lambda_{n})\|}b_{X'}(\lambda_{n})$ are close. By construction,
\[
b_X(z) = b_{X'}(z) U \begin{pmatrix} \varphi_{\lambda_n}(z) & 0 \\ 0 & I_{(n-1)(d-1)} \end{pmatrix}.
\]
Here, $U$ is a unitary satisfying $U e_1 = v$. Let $U_m$ be a unitary obtained from Lemma \ref{lem:not_far_unitary} such that $U_m e_1 = v_m$ and $\|U - U_m\| < \varepsilon/3$. Set
\[
b_{X_m}(z)  = b_{X_m'}(z) U_m \begin{pmatrix} \varphi_{\lambda_{n,m}(z)} & 0 \\ 0 & I_{(n-1)(d-1)}\end{pmatrix}.
\]
Then,
\begin{eqnarray*}
\|b_X(z) - b_{X_m}(z)\| & = & \left\| b_{X'}(z) U\begin{pmatrix} \varphi_{\lambda_n}(z) & 0 \\ 0 & I_{(n-1)(d-1)} \end{pmatrix} - b_{X_m'}(z) U_m \begin{pmatrix} \varphi_{\lambda_{n,m}(z)} & 0 \\ 0 & I_{(n-1)(d-1)}\end{pmatrix}\right\| \\ 
& \leq & \left\| U\begin{pmatrix} \varphi_{\lambda_n}(z) & 0 \\ 0 & I_{(n-1)(d-1)} \end{pmatrix} - U_m \begin{pmatrix} \varphi_{\lambda_{n,m}(z)} & 0 \\ 0 & I_{(n-1)(d-1)}\end{pmatrix}\right\| \\
& + & \|b_{X'}(z) - b_{X_m'}(z)\| \\
& \leq & \|\varphi_{\lambda_n}(z) - \varphi_{\lambda_{n,m}(z)}\| + \|b_{X'}(z) - b_{X_m'}(z)\| + \dfrac{\varepsilon}{3}.
\end{eqnarray*}
Now, by induction, we have uniform convergence for $|z| \leq r$ of $b_{X_m'}$ to $b_{X'}$. Hence, for $m$ big enough, the above expression is less than $\varepsilon$ for all $|z| \leq r$.
\end{proof}

We note that though $b_X$ is not uniquely defined by $X$, the operator $I - M_{b_X} M_{b_X}^*$ is the projection onto the space spanned by the kernels at the points of $X$. Therefore, the map in the following theorem is well-defined.

\begin{thm} \label{thm:inverse_map}
Define a map $\Phi_{d,n} \colon \bX_{d,n}^0 \to \bP_{d,n}^0$ by $[X] \mapsto I - M_{b_X} M_{b_X}^*$. Then $\Phi_{d,n}$ is continuous, $\Psi_{d,n} \circ \Phi_{d,n} = \mathrm{Id}_{\bX_{d,n}^0}$, and $\Phi_{d,n} \circ \Psi_{d,n}|_{\bP_{d,n}^0} =  \mathrm{Id}_{\bP_{d,n}^0}$.
\end{thm}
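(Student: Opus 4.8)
The plan is to verify the three claims in turn, with the bulk of the work going into continuity. For \emph{continuity} of $\Phi_{d,n}$, I would start from a convergent sequence $[X_m] \to [X]$ in $\bX_{d,n}^0$ and invoke Proposition~\ref{prop:Blaschke_choice} to choose representatives $b_{X_m}$ converging to $b_X$ uniformly on compacta in $\B_d$. The point is then to upgrade uniform-on-compacta convergence of the symbols to norm convergence of the multiplication operators $M_{b_{X_m}} \to M_{b_X}$, and hence of the projections $\Phi_{d,n}([X_m]) = I - M_{b_{X_m}} M_{b_{X_m}}^* \to I - M_{b_X} M_{b_X}^* = \Phi_{d,n}([X])$. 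The cleanest route is to note that all the $M_{b_{X_m}}$ are partial isometries (so uniformly bounded), the symbols $b_{X_m}$ are built as finite products of involutions $\varphi_{\lambda_{j,m}}$ and fixed-size scalar unitaries, and one has explicit control of $b_{X_m}$ and all its derivatives on $r\overline{\B_d}$; combined with the fact that the tail of the Taylor expansion of an $H^2_d$-function decays uniformly on the unit vectors of a coinvariant model space, this gives the norm estimate. Concretely, since $M_{b_{X_m}} M_{b_{X_m}}^* = I - P_{\cH_{X_m}}$ and $\cH_{X_m} = \Span\{k_{\lambda_{1,m}}, \dots, k_{\lambda_{n,m}}\}$ with $\lambda_{j,m} \to \lambda_j$, one can alternatively bypass the multipliers entirely: the kernels $k_{\lambda_{j,m}} \to k_{\lambda_j}$ in the norm of $H^2_d$ (the kernel function $\lambda \mapsto k_\lambda$ is norm-continuous on $\B_d$), and since the limiting kernels $k_{\lambda_1}, \dots, k_{\lambda_n}$ are linearly independent (distinct points), the associated projections onto their spans converge in norm by an elementary Gram-matrix / Lemma~\ref{lem:dist_isom} argument. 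This second approach is shorter and I would present continuity this way, using Proposition~\ref{prop:Blaschke_choice} only to guarantee that $\Phi_{d,n}$ lands where we think it does.

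For the identity $\Psi_{d,n} \circ \Phi_{d,n} = \mathrm{Id}_{\bX_{d,n}^0}$: given $[X] = \sum_{j=1}^n [\lambda_j]$ with distinct $\lambda_j$, Lemma~\ref{lem:product_distinct} says $\mathrm{ran}\, M_{b_X} = \bH^2_d \ominus \cH_X$ compressed to $H^2_d$, so $\Phi_{d,n}([X]) = I - M_{b_X}M_{b_X}^* = P_{\cH_X}$ is the projection onto $\cH_X = \Span\{k_{\lambda_1},\dots,k_{\lambda_n}\}$. I must then compute $\sigma_J$ of the compressed shift $P_{\cH_X} M_z|_{\cH_X}$. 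Since the $\lambda_j$ are distinct, each $k_{\lambda_j}$ is (up to normalization) the unique joint eigenvector of $M_z^*|_{\cH_X}$ with joint eigenvalue $\overline{\lambda_j}$ — this is immediate from $M_{z_i}^* k_{\lambda_j} = \overline{(\lambda_j)_i}\, k_{\lambda_j}$ — and these $n$ eigenvectors span $\cH_X$, so each root subspace $R_{\lambda_j}$ is one-dimensional and $\sigma(M_z|_{\cH_X})$ (as a commuting tuple) has spectrum exactly $\{\lambda_1,\dots,\lambda_n\}$ with $\sigma_J = \sum_j [\lambda_j] = [X]$. One small point to address: Definition~\ref{dfn_joint_spectrum_with_multiplicity} requires the tuple to be similar to a strict row contraction, which holds here since the compressed shift of $H^2_d$ to a finite-dimensional coinvariant subspace is a finite-dimensional pure row contraction (as noted after Lemma~\ref{lem:derivatives}), hence similar to a strict one.

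For $\Phi_{d,n} \circ \Psi_{d,n}|_{\bP_{d,n}^0} = \mathrm{Id}_{\bP_{d,n}^0}$: take $P \in \bP_{d,n}^0$, so $P = P_\cH$ for an $n$-dimensional multiplier-coinvariant $\cH \subset H^2_d$ with $\Psi_{d,n}(P) = \sigma_J(P M_z|_\cH) = [X] \in \bX_{d,n}^0$, i.e. $[X] = \sum_{j=1}^n [\lambda_j]$ with $n$ \emph{distinct} points $\lambda_j$. By Proposition~\ref{prop:char_fin_dim}, $\cH = \sum_j \cH_j$ with $k_{\lambda_j} \in \cH_j$; since there are $n$ points and $\dim \cH = n$, each $\cH_j$ is one-dimensional, so $\cH_j = \C k_{\lambda_j}$ and $\cH = \Span\{k_{\lambda_1},\dots,k_{\lambda_n}\} = \cH_X$. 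Then $\Phi_{d,n}(\Psi_{d,n}(P)) = \Phi_{d,n}([X]) = P_{\cH_X} = P_\cH = P$, using Lemma~\ref{lem:product_distinct} again for the identification $I - M_{b_X} M_{b_X}^* = P_{\cH_X}$. The main obstacle I anticipate is the continuity argument — specifically making rigorous that norm convergence of the relevant objects follows from the softer convergence supplied by Proposition~\ref{prop:Blaschke_choice}; everything else is a bookkeeping exercise with root subspaces and Proposition~\ref{prop:char_fin_dim}. I would therefore spend most of the write-up on the projection-convergence estimate (via the Gram matrix of the converging kernels), and keep the two algebraic identities brief.
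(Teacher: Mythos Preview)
Your proposal is correct, and your arguments for the two identities $\Psi_{d,n}\circ\Phi_{d,n}=\mathrm{Id}$ and $\Phi_{d,n}\circ\Psi_{d,n}|_{\bP_{d,n}^0}=\mathrm{Id}$ fill in details the paper leaves implicit. The continuity argument, however, follows a genuinely different route from the paper's.

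The paper does \emph{not} try to establish norm convergence of $M_{b_{X_m}} M_{b_{X_m}}^*$ directly. Instead it invokes \cite[Lemma~5]{Dixmier_Douady}: on the set of projections of fixed finite rank, the norm and strong operator topologies coincide, so it suffices to prove SOT convergence. From Proposition~\ref{prop:Blaschke_choice} one gets $b_{X_m}\to b_X$ uniformly on compacta; since the $M_{b_{X_m}}$ are uniformly bounded this yields $M_{b_{X_m}}\to M_{b_X}$ in WOT, and then the standard fact that bounded WOT convergence of multipliers implies SOT convergence of their adjoints gives $M_{b_{X_m}}^*\to M_{b_X}^*$ in SOT, hence $M_{b_{X_m}} M_{b_{X_m}}^*\to M_{b_X} M_{b_X}^*$ in SOT, which is enough.

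Your preferred route---norm convergence of the kernels $k_{\lambda_{j,m}}\to k_{\lambda_j}$ together with linear independence of the limiting kernels and a Gram-matrix argument---is more elementary: it avoids both the Dixmier--Douady lemma and the WOT/SOT multiplier facts, and indeed renders Proposition~\ref{prop:Blaschke_choice} unnecessary for this particular theorem. The paper's approach, by contrast, puts the Blaschke-product machinery of Subsection~\ref{subsec:Blaschke} and Proposition~\ref{prop:Blaschke_choice} to essential use. Your first outlined route (upgrading uniform-on-compacta convergence of symbols to \emph{norm} convergence of the multiplication operators) would be hard to make rigorous---that implication fails in general and your tail-decay remark is too vague---so your decision to go with the direct kernel approach is the right one.
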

\begin{proof}
Let $ \{ [X_m] \} \in \bX_{d,n}^0$ be a sequence that converges to $[X] \in \bX_{d,n}^0$. By \cite[Lemma 5]{Dixmier_Douady}, the norm and SOT topologies restricted to $\bP_{d,n}$ coincide. Hence, it suffices to show that $M_{b_{X_m}} M_{b_{X_m}}^* \stackrel{SOT}{\rightarrow} M_{b_X} M_{b_X}^*$. Fix some preimage $X \in \B_d^n$ of $[X]$ and construct $b_X$. By the preceding proposition, we can choose preimages $X_m$ and construct $b_{X_m}$, such that $b_{X_m}$ converges to $b_X$ uniformly on compacta. Since the sequence is bounded, we have that $M_{b_{X_m}} \stackrel{WOT}{\longrightarrow} M_{b_X}$. It is then a well known fact on multipliers that $M_{b_{X_m}}^* \stackrel{SOT}{\longrightarrow} M_{b_X}^*$. Again since the sequences are bounded, we have that $M_{b_{X_m}} M_{b_{X_m}}^* \stackrel{SOT}{\longrightarrow} M_{b_X} M_{b_X}^*$ as desired.
\end{proof}

The case of the Hardy space $H^2(\D)$ is different. The intuitive reason is that there are no (complex) directions to obstruct the inverse map in the case of the disc. Given $X \in \bX_{1,n}$, we can define the associate Blaschke product counting multiplicities.

\begin{prop} \label{prop:hardy_space}
The map $\Psi_{1,n} \colon \bP_{1,n} \to \bX_{1,n}$ is a homeomorphism.
\end{prop}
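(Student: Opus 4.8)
The plan is to produce a continuous two–sided inverse of $\Psi_{1,n}$; since $\Psi_{1,n}$ is already surjective and continuous by Theorem \ref{thm:proj_continuous}, this is all that is needed. I would start from the one-variable structure theory. A subspace $\cH\subset H^2(\D)$ is multiplier coinvariant exactly when $H^2(\D)\ominus\cH$ is invariant under the forward shift $M_z$ (the multiplier algebra of $H^2(\D)$ is $H^\infty(\D)$, and $M_z$ generates it as a WOT-closed algebra), so by Beurling's theorem $\cH=K_\Theta:=H^2(\D)\ominus\Theta H^2(\D)$ for an inner function $\Theta$, and $K_\Theta$ is $n$-dimensional precisely when $\Theta$ is a Blaschke product of degree $n$. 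Since $M_\Theta M_\Theta^*$ is unchanged if $\Theta$ is multiplied by a unimodular scalar, and a degree-$n$ Blaschke product is determined up to such a scalar by its multiset of zeros, this gives a candidate inverse $\Phi_{1,n}\colon\bX_{1,n}\to\bP_{1,n}$, $[X]\mapsto I-M_{\Theta_X}M_{\Theta_X}^*$, where $\Theta_X$ is the Blaschke product with zero multiset $X$.

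The second step is to verify $\Psi_{1,n}\circ\Phi_{1,n}=\mathrm{Id}_{\bX_{1,n}}$, i.e. that $\sigma_J\big(P_{K_\Theta}M_z|_{K_\Theta}\big)$ is exactly the multiset of zeros of $\Theta$. By Proposition \ref{prop:char_fin_dim}, $K_\Theta$ is the non-orthogonal direct sum of the root subspaces of $M_z^*|_{K_\Theta}$, so it suffices to compute their dimensions. For a zero $\mu$ of $\Theta$ of order $k$, the corresponding root subspace is $K_\Theta\cap\ker(M_z^*-\bar\mu I)^n$, and by Lemmas \ref{lem:joint_kernel_n} and \ref{lem:derivatives} it is the span of those $k_\mu^{(\beta)}$, $0\le\beta\le n-1$, that are orthogonal to $\Theta H^2(\D)$; since $\langle\Theta h,k_\mu^{(\beta)}\rangle=(\Theta h)^{(\beta)}(\mu)$, Leibniz's rule shows $k_\mu^{(\beta)}\perp\Theta H^2(\D)$ iff $\Theta,\Theta',\dots,\Theta^{(\beta)}$ all vanish at $\mu$, i.e. iff $\beta\le k-1$. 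Hence this root subspace has dimension exactly $k$, so $\sigma_J$ records $\mu$ with multiplicity $k$, and $\Psi_{1,n}$ and $\Phi_{1,n}$ are mutually inverse bijections.

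It remains to prove that $\Phi_{1,n}$ is continuous; together with the continuity of $\Psi_{1,n}$ this makes $\Psi_{1,n}$ a homeomorphism. Suppose $[X_m]\to[X]$ in $(\bX_{1,n},d_s)$. Choosing for each $m$ an ordering of $X_m$ realizing the minimum in the definition of $d_s$, we obtain, after relabelling, $d_{ph}(\lambda_{j,m},\lambda_j)\to 0$ for each $j$, and since each $\lambda_j$ lies in the open disc this forces $\lambda_{j,m}\to\lambda_j$ in $\D$; consequently $\Theta_{X_m}\to\Theta_X$ uniformly on compacta of $\D$. Exactly as in the proof of Theorem \ref{thm:inverse_map}, local-uniform convergence of the uniformly bounded multipliers gives $M_{\Theta_{X_m}}\to M_{\Theta_X}$ (WOT), hence $M_{\Theta_{X_m}}^*\to M_{\Theta_X}^*$ (SOT), hence $M_{\Theta_{X_m}}M_{\Theta_{X_m}}^*\to M_{\Theta_X}M_{\Theta_X}^*$ (SOT, using that these are projections); and by \cite[Lemma 5]{Dixmier_Douady} the SOT and norm topologies agree on $\bP_{1,n}$, so $\Phi_{1,n}([X_m])\to\Phi_{1,n}([X])$ in norm.

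The only genuinely delicate point is the multiplicity count in the second step: confirming that $\sigma_J$ of the compressed shift sees each zero of $\Theta$ with its exact order. Everything else is a one-variable specialization of constructions already in the paper; notably, for $d=1$ there is no complex ``direction'' at a repeated point to obstruct the inverse, which is precisely why $\Phi_{1,n}$ now extends the map $\Phi_{d,n}$ of Theorem \ref{thm:inverse_map} from $\bP_{1,n}^0$ to all of $\bP_{1,n}$.
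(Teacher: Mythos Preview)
Your proof is correct and follows essentially the same approach as the paper: construct the inverse $\Phi_{1,n}$ via the finite Blaschke product with the prescribed multiplicities, and then run the continuity argument of Theorem~\ref{thm:inverse_map} verbatim. The paper's own proof is a two-line sketch pointing to exactly this construction; you have simply written out the details (Beurling, the root-subspace multiplicity count, and the SOT-to-norm passage) that the paper leaves implicit.
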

\begin{proof}
The proof follows the same lines as the proof of the theorem. However, it is much less complicated, for let $[X]= \sum_{j=1}^m n_j[\lambda_j] \in \bX_{1,n}$, then we can associate to $[X]$ the Blaschke product 
\[
b_X(z) = \prod_{j=1}^m \left(\dfrac{z - \lambda_j}{1 - \overline{\lambda_j} z}\right)^{n_j}.
\]
This allows us to extend the argument in the theorem to the points with multiplicity, as well.
\end{proof}

\begin{remark}
The group $\Aut(\B_d)$ acts on $\bP_{d,n}$ by $\varphi \cdot P = U_{\varphi}^* P U_{\varphi}$. To see this, we first note that $\varphi \cdot P$ is a projection with $n$-dimensional range. Moreover, for every $f \in \cM_d$,
\[
(\varphi \cdot P) M_f^* (\varphi \cdot P) = U_{\varphi}^* P U_{\varphi} M_f^* U_{\varphi}^* P U_{\varphi} = U_{\varphi}^* P M_{f \circ \varphi}^* P U_{\varphi} = U_{\varphi}^* M_{f \circ \varphi}^* P U_{\varphi} = M_f^* (\varphi \cdot P).
\]
Now for $P \in \bP_{d,n}^0$, with $\Psi_{d,n}(P) = \sum_{j=1}^n [\lambda_j]$ and $\varphi = \varphi_{\mu}$, we have
\begin{eqnarray*}
(\varphi_{\mu} \cdot P) k_{\varphi_{\mu}(\lambda_j)} = (U_{\varphi_{\mu}} P U_{\varphi_{\mu}}) k_{\varphi_{\mu}(\lambda_j)} & = & \sqrt{1 - \|\mu\|^2} (U_{\varphi_{\mu}} P) \overline{k_{\mu}(\varphi_{\mu}(\lambda_j))} k_{\lambda_j} \\  
& = & \dfrac{\sqrt{1 - \|\mu\|^2}}{1 - \langle \mu, \varphi_{\mu}(\lambda_j) \rangle} U_{\varphi_{\mu}} k_{\lambda_j} \\
& = & \dfrac{1- \|\mu\|^2}{(1 - \langle \mu, \varphi_{\mu}(\lambda_j) \rangle)(1 - \langle \mu, \lambda_j \rangle)} k_{\varphi_{\mu}(\lambda_j)}.
\end{eqnarray*}
Note that by \cite[Theorem 2.2.2(iii)]{Rudin-book}, 
\[
1 - \langle \mu, \varphi_{\mu}(\lambda_j) \rangle = 1 - \langle \varphi_{\mu}(0), \varphi_{\mu}(\lambda_j) \rangle = \dfrac{1 - \|\mu\|^2}{1 - \langle \mu, \lambda_j \rangle}.
\]
Hence, $(\varphi_{\mu} \cdot P) k_{\varphi_{\mu}(\lambda_j)} = k_{\varphi_{\mu}(\lambda_j)}$. Therefore, $\Psi_{d,n}(\varphi_{\mu} \cdot P) = \sum_{j=1}^n [\varphi_{\mu}(\lambda_j)]$. The same claim holds if $\varphi$ is a unitary. Hence, we get that $\Psi_{d,n}$ is $\Aut(\B_d)$-equivariant. From \cite[Remark 1.1.1]{Koszul-book} we conclude that the action of $\Aut(\B_d)$ on $\bP_{d,n}$ is proper. Moreover, the results above combine to show that $\bP_{n-1,n}^0/\Aut(\B_d)$ is homeomorphic to $\pick_n$.
\end{remark}

\section{Bundles of complete Pick spaces and their multiplier algebras} \label{sec:bundles}

In this section, we will use classical results on Hilbert and Banach space bundles to discuss deformations of complete Pick spaces and the corresponding multiplier algebras. Some old but good references on bundles are \cite{Dixmier_Douady} and \cite{Dixmier-book}. 

The tautological bundle on $\bP_{d,n}$ is $P \mapsto PH^2_d$. This is a vector bundle by \cite[Proposition 2.3]{Dupre}. We will denote the tautological bundle by $\cE_{d,n}$. Note that $\cE_{d,n}$ is the restriction of the tautological bundle on the strong Grassmannian of $H^2_d$ in the sense of Dixmier and Douady. Namely, if $\Gr_f$ is the collection of all projections in $B(H^2_d)$ equipped with the strong topology, then $\bP_{d,n}$ embeds into $\Gr_f$ since the strong and norm topologies on $\bP_{d,n}$ coincide. Moreover, we can consider $\fM_{d,n} = \bigsqcup_{P \in \bP_{d,n}} P \cM_d|_{P H^2_d}$. Note that each $f \in \cM_d$ gives rise to a section of $\fM_{d,n}$ given by $\gamma_f(P) = P M_f|_{P H^2_d}$. We denote by $\Lambda$ the collection of all such sections. By \cite[Proposition 10.2.3]{Dixmier-book}, it suffices to show that the map $P \mapsto \|\gamma_f(P)\|$ is continuous to deduce that $\Lambda$ generates a Banach algebra bundle structure on $\fM_{d,n}$. The continuity of the norm is, however, immediate since the topology on $\bP_{d,n}$ is of norm convergence. By \cite[Theorem II.13.18]{FellDoran-v1}, there is a unique topology on $\fM_{d,n}$, such that the Banach bundle structure $\Gamma$ generated by $\Lambda$ consists of continuous sections. We denote by $\Gamma_b$ the collection of all bounded continuous sections and note that $\Lambda \subset \Gamma_b$. We equip $\Gamma_b$ with the norm $\|x\| = \sup_{P \in \bP_{d,n}} \|x(P)\|$. It is then clear that $\Gamma_b$ is a Banach algebra.

For every $m \in \N$, we can consider the construction above for $M_m(\cM_d)$. Namely, we consider the bundle $P \mapsto (I_m \otimes P) M_m(\cM_d)|_{\C^m \otimes P H^2_d}$. As mentioned above, the collection $M_m(\Lambda)$ generates a Banach algebra structure on this bundle. The collection of continuous sections can be identified with $M_m(\Gamma)$. Therefore, we can identify this bundle with $M_m(\fM_{d,n}) \cong M_m \otimes \fM_{d,n}$. We summarize the discussion in the following proposition.
\begin{prop} \label{prop:fd_op_alg_bundle}
We have a Hilbert space bundle $\cE_{d,n}$ and a Banach algebra bundle $\fM_{d,n}$ on $\bP_{d,n}$. Moreover, for every $m \in \N$, we have a Banach algebra bundle $M_m(\fM_{d,n})$. If $\Gamma_b$ is the collection of all bounded continuous sections of $\fM_{d,n}$ equipped with the sup norm, then $\Gamma_b$ is an operator algebra.
\end{prop}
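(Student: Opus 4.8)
The plan is to assemble the ingredients recalled in the paragraphs preceding the statement; the first three assertions are essentially quotations of general bundle theory, and the only substantive point is that $\Gamma_b$ is an operator algebra.

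First I would record the Hilbert bundle. Since $\bP_{d,n}$ carries the norm topology, $P \mapsto P$ is a norm-continuous field of projections on $H^2_d$, so by \cite[Proposition 2.3]{Dupre} the disjoint union $\bigsqcup_{P \in \bP_{d,n}} P H^2_d$ is a locally trivial Hilbert bundle $\cE_{d,n}$ over $\bP_{d,n}$; equivalently it is the restriction of the tautological bundle on $\Gr_f$. Next, for $\fM_{d,n}$ I would verify the hypotheses of \cite[Proposition 10.2.3]{Dixmier-book} for the family $\Lambda = \{\gamma_f \mid f \in \cM_d\}$. Multiplier-coinvariance of $PH^2_d$ gives $P M_f^* P = M_f^* P$, hence $P M_f P = P M_f$ on $H^2_d$, from which $\gamma_f(P)\gamma_g(P) = \gamma_{fg}(P)$, so $\Lambda$ is a subalgebra of sections; for each $P$ the set $\{\gamma_f(P) \mid f \in \cM_d\}$ exhausts the fibre $P\cM_d|_{PH^2_d}$; and $P \mapsto \|\gamma_f(P)\| = \|P M_f P\|$ is continuous (indeed $2\|M_f\|$-Lipschitz) because $P \mapsto P$ is norm-continuous. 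This produces a Banach algebra bundle structure generated by $\Lambda$, and \cite[Theorem II.13.18]{FellDoran-v1} supplies the unique topology on $\fM_{d,n}$ making those sections continuous. Running the identical argument with $H^2_d$ replaced by $\C^m \otimes H^2_d$, projections $I_m \otimes P$, and multiplier algebra $M_m(\cM_d)$ with generating sections $M_m(\Lambda)$ yields the Banach algebra bundle $M_m(\fM_{d,n}) \cong M_m \otimes \fM_{d,n}$ whose continuous sections form $M_m(\Gamma)$.

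Finally, to see that $\Gamma_b$ is an operator algebra I would exhibit a completely isometric algebra homomorphism into a C*-algebra. Put $\fA = \prod_{P \in \bP_{d,n}} B(PH^2_d)$, the $\ell^\infty$-product — a C*-algebra, realized as block-diagonal operators on the $\ell^2$-direct sum $\bigoplus_{P} PH^2_d$ — and let $\iota \colon \Gamma_b \to \fA$ be $x \mapsto (x(P))_P$. Each fibre $P\cM_d|_{PH^2_d}$ is a compression of the operator algebra $\cM_d$ to the coinvariant subspace $PH^2_d$, hence a subalgebra of $B(PH^2_d)$, so $\iota$ is a well-defined injective homomorphism, and since the norm on $\Gamma_b$ is the supremum of the fibre operator norms, $\iota$ is isometric with closed range. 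At the $m$-th matrix level, $M_m(\Gamma_b)$ is, by the third paragraph, the bounded continuous sections of $M_m(\fM_{d,n})$ with the sup norm, and the value at $P$ lies in $M_m\bigl(P\cM_d|_{PH^2_d}\bigr) \subset B(\C^m \otimes PH^2_d)$ with its operator norm; hence $\iota$ is completely isometric onto a closed subalgebra of $\fA$. Being completely isometrically a closed subalgebra of a C*-algebra, $\Gamma_b$ is an operator algebra.

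The main obstacle — really the only place where care is needed — is the consistency of the matrix norms used in the last step: one must know that the operator-space structure on $\Gamma_b$ arising from the sup norm on continuous sections of $M_m(\fM_{d,n})$ is exactly the one inherited via $\iota$ from the C*-algebra $\fA$. This is precisely what the matrix-level bundle construction of the third paragraph delivers; an alternative, avoiding the embedding $\iota$, would be to check the Blecher–Ruan–Sinclair axioms (complete contractivity of the multiplication on $\Gamma_b$) directly, which reduces to the same fibrewise estimate.
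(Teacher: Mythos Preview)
Your proof is correct. The first three paragraphs simply recapitulate the bundle constructions already carried out in the discussion preceding the proposition, exactly as the paper does (the paper's proof begins ``We need only to prove that $\Gamma_b$ is an operator algebra''), so there is nothing to add there.

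For the final claim you take a different route from the paper. The paper argues abstractly: it equips $\Gamma_b$ with the operator space structure coming from $M_m(\Gamma_b) \cong$ bounded continuous sections of $M_m(\fM_{d,n})$, observes that each fibre is already an operator algebra, and then invokes Ruan's axioms together with the Blecher--Ruan--Sinclair theorem to conclude. You instead produce a concrete completely isometric embedding $\iota$ of $\Gamma_b$ into the C*-algebra $\prod_P B(PH^2_d)$ and check that the matrix norms line up. Both arguments hinge on the same identification of $M_m(\Gamma_b)$ with sections of $M_m(\fM_{d,n})$ and the fibrewise operator norms; your approach has the advantage of being self-contained (it exhibits the Hilbert space on which $\Gamma_b$ acts and avoids citing BRS), while the paper's is shorter once one is willing to quote the abstract characterization. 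You even flag the BRS route as an alternative in your last sentence, so you have in effect covered both approaches.
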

\begin{proof}
We need only to prove that $\Gamma_b$ is an operator algebra. To see that $\Gamma_b$ is an operator space, we note that $M_m(\Gamma_b)$ is precisely the collection of bounded continuous sections of $M_m(\fM_{d,n})$ as obtained in the construction above. Consequently, every fiber turns out to be an operator algebra. Therefore, by Ruan's axioms and the condition of the Blecher-Ruan-Sinclair theorem \cite{Paulsen-book}, it follows that $\Gamma_b$ is an operator algebra.
\end{proof}

At this point, we would like to point the reader's attention to a natural question. We note that if $Z$ is a paracompact space and $\varphi \colon Z \to \bP_{d,n}$ is a continuous map, then we obtain a vector bundle on $Z$ via $\cF = \varphi^* \cE_{d,n}$. If we, further, assume that $\varphi(Z) \subset \bP_{d,n}^0$, the bundle $\varphi^* \cE_{d,n}$ is a bundle of complete Pick spaces on $Z$. If we identify $\bP_{d,n}^0$ with $\bX_{d,n}^0$ via the maps from Theorem \ref{thm:inverse_map}, then we have a map $\varphi \colon Z \to \bX_{d,n}^0$. Recall that $\B_d^{n,0}$ is the collection of all $n$-tuples of distinct points in $\B_d$. If we write $\pi_{d,n}$ for the symmetrization map, then $\pi_{d,n} \colon \B_d^{n,0} \to \bX_{d,n}^0$ is a principal $S_n$ bundle. Therefore, we can obtain a principal $S_n$-bundle on $Z$ via $\B_d^{n,0} \times_{\bX_{d,n}^0} Z$ and the projection map onto the second coordinate, where the fiber product is constructed, of course, using $\varphi$. Let $z_0 \in Z$ and $\varphi(z_0) \in V \subset \bX_{d,n}^0$ be a neighborhood, such that the principal bundle is trivialized over it. We can construct a frame of $\cF$ on $\phi^{-1} (V)$ by taking the sections $z \mapsto k_{\lambda_j(z)}$, where $\lambda_1(z),\ldots,\lambda_n(z)$ is the trivialization of the $S_n$-bundle on $\phi^{-1} (V)$. Choosing a trivializing cover $Z = \bigcup_{\alpha} U_{\alpha}$ for the $S_n$-bundle, we can see that the structure group of $\cF$ can be reduced to $S_n$. Moreover, for every $\alpha$, we obtain the matrix-valued function $k_{\alpha}(z) = \left( \langle k_{\lambda_j(z)}, k_{\lambda_i(z)} \right)_{i,j=1}^n$. For every $\alpha$ and $\beta$ in our index set, there exists a permutation matrix $h_{\alpha,\beta}$, such that $h_{\alpha,\beta}^* k_{\alpha} h_{\alpha,\beta} = k_{\beta}$. In fact, $h_{\alpha,\beta}$ is the transition map for the frames above. Rewriting this we see that $k_{\alpha} h_{\alpha,\beta} = h_{\alpha,\beta} k_{\beta}$. Hence, we can define a map by $f_{\alpha} \colon U_{\alpha} \times \C^n \to U_{\alpha} \times \C^n$ by $f_{\alpha}(z, \xi) = (z, k_{\alpha}(z) \xi)$. Then, for every $z \in U_{\alpha} \cap U_{\beta}$ and every $\xi \in \C^n$, we have that 
\[
(h_{\alpha,\beta} f_{\alpha})(z,\xi) = (z, h_{\alpha,\beta} k_{\alpha}(z) \xi) = (z, k_{\beta}(z) h_{\alpha,\beta} \xi) = (f_{\beta} h_{\alpha,\beta})(z, \xi).
\]
Therefore, $f$ defines an automorphism of $\cF$. This leads us to the following definition.

\begin{dfn}
Let $Z$ be a paracompact space and $\cF$ a Hermitian vector bundle of dimension $n$ with structure bundle $S_n$. A complete Pick bundle structure on $\cF$ is given by a covering $\fU = \{U_{\alpha}\}_{\alpha \in A}$ of $Z$, along with a section $f$ of $\Aut(\cF)$, such that on every element of $\fU$, the bundle $\cF$ is trivial and $f$ gives us a continuous family of complete Pick kernels on $n$ points. We call the triple $(\cF,k,\fU)$ a complete Pick bundle.
\end{dfn}

\begin{quest} \label{quest:universality}
Let $Z$ be a compact topological space and $(\cF,k, \fU)$ a complete Pick bundle of rank $n$ on $Z$. What are the conditions on $Z$ and $(\cF,k, \fU)$, such that there exist $d \in \N$ and a continuous map $\varphi \colon Z \to \bX_{d,n}^0$, such that $\cF$ is the pullback of $\cE_{d,n}$ and $k$ is the corresponding kernel?
\end{quest}

In general, one might be forced to consider $\B_{\infty}$ the ball of $\ell^2$ and $\pi_{\infty,n} \colon \B_{\infty}^{n,0} \to \bX_{\infty, n}^0$. The space $\bX_{\infty,n}^0$ is a subspace of the Fadell configuration space \cite{FadellNeuwirth}, which is the classifying space for $S_n$. However, the methods in the paper must be refined to deal with $\bP_{\infty,n}$ and $\cE_{\infty,n}$.

\section{Cowen-Douglas class operators from hypersurfaces} \label{sec:Cowen-Douglas}

Assume that $p \in \C[z_1,\ldots,z_d]$ is a reduced homogeneous polynomial of degree $n$. Let $Z(p)$ be the variety cut out by $p$ and $V(p) = \B_d \cap Z(p)$. We may assume (up to a unitary) that $p(0,\ldots,0, 1) \neq 0$. 

\begin{lem} \label{lem:lower_bound_coordinate}
There exists $c < 1$, such that for all $\lambda \in V(p)$, $|\lambda_d| \leq c$.
\end{lem}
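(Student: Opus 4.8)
The plan is a short compactness argument that exploits the homogeneity of $p$ together with the normalization $p(0,\ldots,0,1)\neq 0$. First I would observe that $Z(p)$ is closed, being the zero locus of the continuous function $p$, so that $Z(p)\cap\overline{\B_d}$ is a compact subset of $\C^d$. The continuous function $\lambda\mapsto|\lambda_d|$ therefore attains its maximum on $Z(p)\cap\overline{\B_d}$; call this maximum value $c$, attained at some point $\mu\in Z(p)\cap\overline{\B_d}$. Since $V(p)=Z(p)\cap\B_d\subseteq Z(p)\cap\overline{\B_d}$, it suffices to prove that $c<1$.

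Next I would argue by contradiction: suppose $c=1$, i.e. $|\mu_d|=1$. Then $\sum_{j=1}^{d}|\mu_j|^2\le 1$ forces $\mu_j=0$ for $1\le j\le d-1$, so $\mu=(0,\ldots,0,\mu_d)$ with $|\mu_d|=1$. By homogeneity of $p$ of degree $n$ we get $p(\mu)=\mu_d^{\,n}\,p(0,\ldots,0,1)\neq 0$, contradicting $\mu\in Z(p)$. Hence $c<1$, and then $|\lambda_d|\le c<1$ for every $\lambda\in V(p)$, which is exactly the assertion of the lemma.

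I do not expect a genuine obstacle here; the only points that require a line of care are that the maximum of $|\lambda_d|$ is genuinely attained (which is why one passes to the compact set $Z(p)\cap\overline{\B_d}$ rather than working on the open ball), and that a point of the closed ball with $|\lambda_d|=1$ is forced to be a scalar multiple of the $d$-th coordinate vector, at which the non-vanishing hypothesis applies via homogeneity. I would also note, for the record, that reducedness of $p$ plays no role in this particular statement: only homogeneity and $p(0,\ldots,0,1)\neq 0$ are used.
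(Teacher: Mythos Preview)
Your proposal is correct and follows essentially the same route as the paper: pass to the compact set $Z(p)\cap\overline{\B_d}$, note that $|\lambda_d|$ attains its maximum there, and rule out the value $1$ using homogeneity together with $p(0,\ldots,0,1)\neq 0$. The paper phrases the last step via the equality case of Cauchy--Schwarz rather than directly via $\sum_j|\mu_j|^2\le 1$, but this is the same observation.
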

\begin{proof}
Let $e_d = (0,\ldots,0,1)$ and $\overline{V(p)} = \overline{\B_d} \cap Z(p)$. Note that $\lambda_d = \langle \lambda, e_d \rangle$. This function is continuous on $\overline{V(p)}$ and bounded by $1$ by Cauchy-Schwartz. However, by our assumption, $p(e_d) \neq 0$. Thus, by the equality close of Cauchy-Schwartz and the fact that $p$ is homogenous, we get that $\lambda \mapsto |\lambda_d|$ never attains the value $1$ on $\overline{V(p)}$. By compactness, there exists $c < 1$, such that $\max_{\lambda \in \overline{V(p)}} |\lambda_d| = c$.
\end{proof}

We will write $z' = (z_1,\ldots,z_{d-1})$. We aim to produce a vector bundle of complete Pick spaces from projecting the hypersurface cut out by $p$ onto the coordinate plane of $z'$. The following lemma is necessary since we are dealing with the part of the hyperplane inside $\B_d$.

\begin{lem} \label{lem:bundle_neighborhood}
There exists a neighborhood $U$ of the origin in the coordinate plane of $z'$, such that for every $\lambda \in U$, $p(\lambda,z_d)$ has precisely $n$ roots counting multiplicities in $\B_d$.
\end{lem}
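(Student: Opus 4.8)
The plan is to reduce the statement to the elementary fact that the roots of a monic polynomial depend continuously on its coefficients. First I would observe that, writing $c = p(0,\ldots,0,1) \neq 0$, homogeneity of $p$ of degree $n$ forces the one-variable polynomial $q_{\lambda}(w) := p(\lambda, w)$, for $\lambda = z'$ ranging over the coordinate plane $\C^{d-1}$, to have degree exactly $n$ in $w$, with leading coefficient $c$ independent of $\lambda$ and with every lower-order coefficient a polynomial in $\lambda$ that is homogeneous of positive degree, hence vanishes at $\lambda = 0$. Consequently $q_0(w) = c\, w^n$, so $q_0$ has all $n$ of its roots at $w = 0$, and since $c \neq 0$ the polynomial $q_{\lambda}$ has exactly $n$ roots in $\C$ counting multiplicities for every $\lambda$.

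Next I would invoke continuity of roots: the unordered tuple of $n$ roots $w_1(\lambda), \ldots, w_n(\lambda)$ of the monic polynomial $c^{-1} q_{\lambda}$, listed with multiplicity, depends continuously on the coefficients, which in turn depend polynomially on $\lambda$. Since all roots equal $0$ at $\lambda = 0$, given $\varepsilon > 0$ there is $\delta > 0$ such that $\|\lambda\| < \delta$ implies $|w_j(\lambda)| < \varepsilon$ for all $j$. Fixing $\varepsilon = 1/2$ and setting $U = \{\lambda \in \C^{d-1} : \|\lambda\| < \min(\delta, 1/2)\}$, for $\lambda \in U$ and each root $w_j(\lambda)$ we get $\|(\lambda, w_j(\lambda))\|^2 = \|\lambda\|^2 + |w_j(\lambda)|^2 < 1$, i.e.\ $(\lambda, w_j(\lambda)) \in \B_d$. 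Hence all $n$ roots of $p(\lambda, z_d)$ lie in $\B_d$ and there are no others, which is exactly the assertion.

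There is essentially no hard step here; the only points requiring care are (i) ruling out a drop in degree as $\lambda$ varies, which is precisely where the hypothesis $p(0,\ldots,0,1) \neq 0$ together with homogeneity enters, and (ii) making the count of roots inside $\B_d$ exactly $n$ rather than merely $\leq n$, which follows once one knows $q_{\lambda}$ has exactly $n$ roots in $\C$ and the continuity estimate places all of them in $\B_d$. One could alternatively obtain the count by a Rouch\'e/argument-principle argument applied to $q_{\lambda}$ on a small disc around $0$, but the continuity-of-roots route is the most economical. I would also remark that Lemma \ref{lem:lower_bound_coordinate} is the companion global bound on $|z_d|$ over $V(p)$ and is not needed for this purely local statement near the origin.
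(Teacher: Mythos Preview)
Your proof is correct and follows essentially the same approach as the paper: the paper's proof is the single sentence ``This follows from the continuity of roots of $p(\lambda,z)$ in $\lambda$ and the fact that $p(0,z) = a z^n$, for some $a \neq 0$,'' and you have simply unpacked this in detail, including the verification that the leading coefficient of $q_\lambda$ is the nonzero constant $c = p(0,\ldots,0,1)$ and the explicit choice of $U$ ensuring $(\lambda, w_j(\lambda)) \in \B_d$. Your remark that Lemma~\ref{lem:lower_bound_coordinate} is not needed here is also correct.
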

\begin{proof}
This follows from the continuity of roots of $p(\lambda,z)$ in $\lambda$ and the fact that $p(0,z) = a z^n$, for some $a \neq 0$.
\end{proof}
\begin{remark} \label{choice of U}
By shrinking $U$, if necessary, we may assume that $U$ is invariant under conjugation (say, a polydisc or a ball). From now on, we will assume that it is the case when we speak of $U$.
\end{remark}
Let $\cH_p = (p H^2_d)^{\perp}$. Note that $\cH_p = \oplus_{m=0}^{\infty} \cH_{p,m}$, where $\cH_{p,m}$ is the $m$-th graded component of $\cH_p$. We will denote by $P_m$ the projection onto $\cH_{p,m}$, for $m \in \N \cup \{0\}$. Let $S_j = P_{\cH_p} M_{z_j} |_{\cH_p}$ for $1 \leq j \leq d$. Let $S = (S_1,\ldots,S_d)$ be the corresponding row contraction. Recall that $S S^* = \sum_{j=1}^d S_j S_j^* = I - P_0$, where $P_0$ is the projection onto the graded component of degree $0$ in $\cH_p$.

Now let $U$ be as above and $\lambda \in U$. Set $\cH_{p,\lambda} = \cH_p \cap \bigcap_{j=1}^{d-1} \ker \left(M_{z_j} - \lambda_j I\right)^*$ and observe that
\[
\cH_{p, \lambda}^{\perp} = \overline{p H^2_d} + \sum_{j=1}^{d-1} (z_j - \lambda_j) H^2_d.
\]
Let $J_{p,\lambda} \subset \C[z_1,\ldots,z_d]$ be the ideal generated by $p$ and $z_j - \lambda_j$ for $j=1,\ldots,z_{d-1}$. Therefore, by what we have above, $\cH_{p,\lambda} = J_{p,\lambda}^{\perp}$. 

\begin{lem} \label{lem:fibers}
We have that $\dim \cH_{p,\lambda} = n$ and if $\mu_1,\ldots,\mu_r$ are the roots of $p(\lambda,z)$ of multiplicities $m_1,\ldots,m_r$, respectively, then $\cH_{p,\lambda}$ is spanned by $\{k_{(\lambda,\mu_j)}^{(\ell e_d)}\}_{j=1, \ell = 0}^{r, m_j-1}$, where $e_d$ is the last vector of the standard basis of $\C^d$.
\end{lem}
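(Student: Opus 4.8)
The plan is to work on the algebraic side with the ideal $J_{p,\lambda}$ and transfer the conclusion to $\cH_{p,\lambda} = J_{p,\lambda}^\perp$ via the reproducing kernel. First I would compute $\dim \cH_{p,\lambda}$. Since $\cH_{p,\lambda} = J_{p,\lambda}^\perp$ and $J_{p,\lambda}$ is the ideal in $\C[z_1,\ldots,z_d]$ generated by $z_1-\lambda_1,\ldots,z_{d-1}-\lambda_{d-1}$ and $p$, the quotient ring $\C[z_1,\ldots,z_d]/J_{p,\lambda}$ is isomorphic (by substituting $z_j = \lambda_j$ for $j \le d-1$) to $\C[z_d]/(p(\lambda,z_d))$, which has dimension $\deg_{z_d} p(\lambda, z_d)$. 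Because $p(0,z_d) = a z_d^n$ with $a \neq 0$, the leading coefficient of $p(\lambda,z_d)$ in $z_d$ is a nonzero constant, so this degree is exactly $n$; hence $\dim \C[z_1,\ldots,z_d]/J_{p,\lambda} = n$. To pass from the polynomial quotient to the Hilbert-space orthocomplement, I would note that the closure $\overline{J_{p,\lambda}}$ in $H^2_d$ has codimension equal to $\dim(\C[z_1,\ldots,z_d]/J_{p,\lambda})$: indeed $\cH_{p,\lambda}$ consists of those $f$ with $\langle f, g\rangle = 0$ for all $g \in J_{p,\lambda}$, and evaluation functionals / Taylor-coefficient functionals show this space of functionals is spanned by the $n$ functionals on the quotient. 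This is essentially the argument already used in Lemma \ref{lem:joint_kernel_n}; in fact for $\lambda$ fixed, $\cH_{p,\lambda}$ sits inside $\bigcap_{|\alpha|=N}\ker(M_z^*-\overline{(\lambda,\mu)}I)^\alpha$ summed over the finitely many roots $(\lambda,\mu_j)$ for $N$ large, which is finite-dimensional and lets one run the same linear-independence/spanning dichotomy.

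Next I would identify the spanning set. Fix a root $\mu_j$ of $p(\lambda,z_d)$ of multiplicity $m_j$. I claim $k^{(\ell e_d)}_{(\lambda,\mu_j)} \in \cH_{p,\lambda}$ for $0 \le \ell \le m_j - 1$. For this it suffices to check orthogonality against generators of $J_{p,\lambda}$. For $g = (z_i - \lambda_i)h$ with $i \le d-1$: using $\langle g, k^{(\ell e_d)}_{(\lambda,\mu_j)}\rangle = \partial^{\ell}_{z_d}\big((z_i-\lambda_i)h\big)(\lambda,\mu_j)$, every term of the Leibniz expansion still carries the factor $(z_i - \lambda_i)$ evaluated at $z_i = \lambda_i$ (since $\partial_{z_d}$ does not touch $z_i$), so this vanishes. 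For $g = p\cdot h$: $\langle p h, k^{(\ell e_d)}_{(\lambda,\mu_j)}\rangle = \partial^{\ell}_{z_d}(ph)(\lambda,\mu_j)$, and since $z_d = \mu_j$ is a root of $p(\lambda,\cdot)$ of order $m_j > \ell$, the Leibniz rule gives a sum in which every term contains a factor $\partial^{s}_{z_d}p(\lambda,z_d)\big|_{z_d=\mu_j}$ with $s \le \ell < m_j$, all of which vanish; hence $\langle ph, k^{(\ell e_d)}_{(\lambda,\mu_j)}\rangle = 0$. So the displayed vectors lie in $\cH_{p,\lambda}$. Their total count is $\sum_{j=1}^r m_j = \deg_{z_d}p(\lambda,z_d) = n$, matching the dimension, so it remains only to prove linear independence.

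For linear independence I would exhibit, for each pair $(j,\ell)$, a polynomial whose action separates $k^{(\ell e_d)}_{(\lambda,\mu_j)}$ from the others: take $q_{j,\ell}(z) = (z_d - \mu_j)^\ell \prod_{j'\neq j}(z_d - \mu_{j'})^{m_{j'}}$. Then $\partial^{\ell'}_{z_d} q_{j,\ell}$ vanishes at $(\lambda,\mu_{j'})$ for all $j' \neq j$ and all $\ell' \le m_{j'}-1$, vanishes at $(\lambda,\mu_j)$ for $\ell' < \ell$, and is nonzero there for $\ell' = \ell$; pairing against $\sum c_{j',\ell'} k^{(\ell' e_d)}_{(\lambda,\mu_{j'})}$ and using a descending induction on $\ell$ forces all $c_{j,\ell} = 0$. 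Since the set has $n$ independent members in an $n$-dimensional space, it is a basis. The only genuinely delicate point — and the step I would be most careful with — is the codimension claim $\dim\big(H^2_d \ominus \overline{J_{p,\lambda}}\big) = \dim\big(\C[z]/J_{p,\lambda}\big)$, i.e. that passing to the Hilbert-space closure does not lose dimension; but this follows from the already-established fact (Lemma \ref{lem:joint_kernel_n} and Proposition \ref{prop:char_fin_dim}, applied at the finitely many points $(\lambda,\mu_j)$) that $\cH_{p,\lambda}$ is finite-dimensional and spanned by kernels and their $z_d$-derivatives at those points, together with the containment $\cH_{p,\lambda}^\perp \supseteq J_{p,\lambda}$ and the elementary fact that a finite-codimension ideal is dense in its own annihilator's annihilator. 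Everything else is the routine Leibniz-rule bookkeeping sketched above.
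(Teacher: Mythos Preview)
Your proof is correct and follows essentially the same route as the paper's: verify via the Leibniz rule that each $k_{(\lambda,\mu_j)}^{(\ell e_d)}$ is orthogonal to $(z_i-\lambda_i)H^2_d$ and to $pH^2_d$, note these $n$ vectors are linearly independent, and bound $\dim\cH_{p,\lambda}$ above by $n$ using the isomorphism $\C[z]/J_{p,\lambda}\cong\C[z_d]/(p(\lambda,z_d))$. Your explicit linear-independence argument with the test polynomials $q_{j,\ell}$ is a welcome addition the paper omits; on the other hand, your ``delicate point'' about the codimension of $\overline{J_{p,\lambda}}$ is dispatched in the paper simply by density of polynomials in $H^2_d$ (so that any $f\in\cH_{p,\lambda}$ is determined by the $n$ numbers $\langle q_j,f\rangle$), making your detour through Lemma~\ref{lem:joint_kernel_n} and Proposition~\ref{prop:char_fin_dim} unnecessary.
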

\begin{proof}
Fix $1 \leq j \leq k$ and $0 \leq \ell \leq m_j - 1$. For every $f \in H^2_d$ and $1 \leq i \leq d-1$, we have that
\[
\left\langle f, (M_{z_i} - \lambda_i)^* k_{(\lambda, \mu_j)}^{(\ell e_d)} \right\rangle = \left\langle (z_i - \lambda_i) f, k_{(\lambda, \mu_j)}^{(\ell e_d)} \right\rangle = \dfrac{\partial^{\ell}}{\partial z_d^{\ell}}\left((z_i - \lambda_i) f \right) (\lambda, \mu_j) = 0.
\]
Hence, $k_{(\lambda, \mu_j)}^{(\ell e_d)} \in \bigcap_{i=1}^{d-1} \ker (M_{z_i} - \lambda_i I)^*$. Now for $f \in H^2_d$, we have that
\[
\left\langle p f, k_{(\lambda, \mu_j)}^{(\ell e_d)} \right\rangle =\sum_{b = 0}^{\ell} \dfrac{\partial^{b} p}{\partial z_d^{b}}(\lambda,\mu) \dfrac{\partial^{\ell-b} f}{\partial z_d^{\ell-b}}(\lambda,\mu) = 0.
\]
The last equality follows from the fact that $\dfrac{\partial^{b} p}{\partial z_d^{b}}(\lambda,\mu) = 0$ for $0 \leq b \leq m_j-1$. Hence, $k_{(\lambda, \mu_j)}^{(\ell e_d)} \in \cH_{p,\lambda}$. Moreover, the vectors $k_{(\lambda, \mu_j)}^{(\ell e_d)}$ are linearly independent. Thus, $\dim \cH_{p,\lambda} \geq n$. 

On the other hand, the quotient $\C[z_1,\ldots,z_d]/J_{p,\lambda}$ is finite dimensional, since it is isomorphic to $\C[z]/(p(\lambda,z))$. The dimension of the latter space is $n$. Now we can choose a basis $q_1,\ldots,q_n$ for a complement of $J_{p,\lambda}$. Hence, we can write every $q \in \C[z_1,\ldots,z_d]$ as $q = q_0 + \sum_{j=1}^n \alpha_j q_j$, where $q_0 \in J_{p,\lambda}$. Hence, for every $f \in \cH_{p,\lambda}$,
\[
\langle q, f \rangle = \sum_{j=1}^d \alpha_j \langle q_j, f \rangle.
\]
In other words, $f$ is determined completely, by $\langle q_j, f \rangle$ for $j=1,\ldots,n$. We conclude that $\dim \cH_{p,\lambda} = n$.
\end{proof}

Therefore, we obtain a map $\varphi \colon U \to \bP_{d,n}$ given by $\varphi(\lambda) = P_{\cH_{p, \lambda}}$. To show that this map is continuous, we will show that the tuple of operators $S^{\prime *} = (S_1^*,\ldots,S_{d-1}^*)$ is in the Cowen-Douglas class for $U$. To do this, we first observe that it is clear that for $\lambda \in U$, $\ker(S^{\prime *} - \bar{\lambda} I) = \cH_{p, \lambda}$. Moreover, if $f \in \cH_p$ is orthogonal to $\bigvee_{\lambda \in U} \cH_{p, \lambda}$, then $f = 0$. This condition, in particular, implies that $f$ is identically $0$ on a neighborhood of $0$ in $V(p)$. Hence, by analyticity, it is identically $0$ on $V(p)$. Therefore, we only need to show that the range of the operator $S^{\prime *} - \overline{\lambda}I \colon \cH_p \to \cH_p \otimes \C^{d-1}$ is closed, for each $ \lambda \in U $.

\begin{lem}
Let $ \lambda \in U $, $ f \in \mathcal{ H }_p $ and $ \mu_1, \hdots, \mu_r \in \C $ be the roots of the one variable polynomial $ p ( \lambda, \cdot ) $ with multiplicities $m_1, \ldots, m_r$. Then, $ f $ takes the form
$$ f = \sum_{ j = 1 }^{ d - 1 } ( S_j - \lambda_j ) h_j $$
for some $ h_1, \hdots, h_{ d - 1 } \in \mathcal{ H }_p $ if and only if every point $(\lambda, \mu_j)$ is a root of $f$ of order $m_j$, for $1 \leq j \leq r$.
\end{lem}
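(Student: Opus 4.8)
The plan is to handle the two implications separately, with the converse carrying all the weight. \emph{Forward implication.} Suppose $f=\sum_{j=1}^{d-1}(S_j-\lambda_j)h_j$ with $h_1,\dots,h_{d-1}\in\cH_p$. Because each $h_j\in\cH_p$ we have $(S_j-\lambda_j)h_j=P_{\cH_p}\big((z_j-\lambda_j)h_j\big)$, so $f-\sum_{j=1}^{d-1}(z_j-\lambda_j)h_j\in\cH_p^{\perp}=\overline{pH^2_d}$. On the complex line $L_\lambda=\{(\lambda,w):|w|^2<1-\|\lambda\|^2\}$ the function $\sum_{j=1}^{d-1}(z_j-\lambda_j)h_j$ vanishes identically, while every element of $\overline{pH^2_d}$ restricts to $L_\lambda$ as a function vanishing to order at least $m_j$ at each $\mu_j$: this is clear for $pg$, $g\in H^2_d$, since $(pg)|_{L_\lambda}=p(\lambda,\cdot)\,g(\lambda,\cdot)$, and it passes to norm limits because the functionals $h\mapsto\langle h,k^{(\ell e_d)}_{(\lambda,\mu_j)}\rangle$ for $0\le\ell<m_j$ are bounded and kill $pH^2_d$. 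Taking $z_d$-derivatives of $f$ along $L_\lambda$ then shows $(\lambda,\mu_j)$ is a root of $f$ of order $m_j$; equivalently, by the description of $\cH_{p,\lambda}$ in Lemma~\ref{lem:fibers}, $f\perp\cH_{p,\lambda}$.

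\emph{Converse.} The hypothesis on $f$ says precisely that $f\perp\cH_{p,\lambda}$. Since $\cH_p^{\perp}=\overline{pH^2_d}$ is $M_{z_j}$-invariant we have $S_j^*=M_{z_j}^*|_{\cH_p}$, so by Lemma~\ref{lem:fibers} the subspace $\cH_{p,\lambda}$ is exactly $\bigcap_{j=1}^{d-1}\ker(S_j^*-\overline{\lambda_j})=\ker(S^{\prime *}-\overline{\lambda}I)$; hence $f$ lies in the closure of the range of the row operator $S'-\lambda=(S_1-\lambda_1,\dots,S_{d-1}-\lambda_{d-1})\colon\cH_p\otimes\C^{d-1}\to\cH_p$. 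Everything therefore reduces to proving that this range is already \emph{closed} — which is also exactly what is needed afterwards to put $S^{\prime *}$ in the Cowen--Douglas class.

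I would prove closedness by a finite-codimension argument. From the forward implication $\operatorname{ran}(S'-\lambda)\subseteq\cH_p\ominus\cH_{p,\lambda}$, a subspace of codimension $n=\dim\cH_{p,\lambda}$, so it suffices to show $\dim\big(\cH_p/\operatorname{ran}(S'-\lambda)\big)\le n$: a bounded operator with finite-codimensional range has closed range, and the inclusion then forces $\operatorname{ran}(S'-\lambda)=\cH_p\ominus\cH_{p,\lambda}$. Since $\operatorname{ran}(S'-\lambda)=P_{\cH_p}\big(\sum_{j=1}^{d-1}(z_j-\lambda_j)H^2_d\big)$, lifting through $H^2_d\to\cH_p$ turns this into a Weierstrass-type division statement: every $g\in H^2_d$ can be written
\[
g=\sum_{j=1}^{d-1}(z_j-\lambda_j)\,g_j+p\,g_0+\sum_{k=0}^{n-1}c_k z_d^k,\qquad g_0,\dots,g_{d-1}\in H^2_d,\ c_0,\dots,c_{n-1}\in\C,
\]
so that $1,z_d,\dots,z_d^{n-1}$ span the quotient. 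For polynomial $g$ this is ordinary polynomial division by $p$ in the variable $z_d$ — legitimate because $p(0,\dots,0,1)\ne 0$ makes $p$ monic in $z_d$ up to the nonzero scalar $a=p(0,\dots,0,1)$, $p=a z_d^n+(\text{lower order in }z_d)$ — followed by reducing the remainder, of $z_d$-degree $<n$, modulo $(z_1-\lambda_1,\dots,z_{d-1}-\lambda_{d-1})$ (on $L_\lambda$ the hypothesis forces that remainder to vanish, hence its $z'$-coefficients vanish at $\lambda$). The main obstacle is upgrading the division to arbitrary $g\in H^2_d$ with norm control on the $g_j$; I expect this to reduce to estimates on the Bombieri norms of the monomials $z_d^{k}{z'}^{\beta}$, the structural point once more being that $p$ monic in $z_d$ lets the term $pg_0$ absorb the high powers of $z_d$ that the linear part $\sum_j(z_j-\lambda_j)g_j$ cannot reach on its own — the mechanism that fails, for instance, for $\sum_{j\le d-1}z_jH^2_d$ alone, whose range is not closed. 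Granting the division, $\operatorname{ran}(S'-\lambda)=\cH_p\ominus\cH_{p,\lambda}$ is closed, and unwinding the identifications gives $f=\sum_{j=1}^{d-1}(S_j-\lambda_j)h_j$ for suitable $h_j\in\cH_p$, as required.
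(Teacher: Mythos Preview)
Your forward implication is fine and matches the paper's one-line observation that such an $f$ lies in $\cH_{p,\lambda}^{\perp}$.

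For the converse, the strategy of proving closedness of $\operatorname{ran}(S'-\lambda)$ via finite codimension is reasonable, but the argument is incomplete at the decisive step: you only establish the Weierstrass-type decomposition
\[
g=\sum_{j=1}^{d-1}(z_j-\lambda_j)g_j+p\,g_0+\sum_{k=0}^{n-1}c_kz_d^{\,k}
\]
for \emph{polynomial} $g$, and then write ``the main obstacle is upgrading the division to arbitrary $g\in H^2_d$ with norm control \dots\ I expect this to reduce to estimates''. That expectation is the whole content of the lemma. Density of polynomials does not help without a uniform bound on the $g_j$, and such a bound is not automatic: already the simpler statement that $f\in H^2_d$ with $f(a)=0$ factors as $f=\sum_i(z_i-a_i)f_i$ with $f_i\in H^2_d$ is the Gleason problem for $H^2_d$, which is a genuine theorem (Alpay--Kaptano\u{g}lu), not a formality. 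Your proposed division is a harder statement in the same spirit, and you have not supplied the analytic input.

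The paper avoids this difficulty by working directly with the given $f$ (not an arbitrary $g$) and invoking the Gleason property of $H^2_d$ inductively. At each step one has a function in $H^2_d$ that vanishes at a specific point $(\lambda,\mu_k)$, so Gleason produces a factorization $\sum_{j\le d-1}(z_j-\lambda_j)f_j+(z_d-\mu_k)f_d$ with all pieces in $H^2_d$; one then checks that the new $z_d$-coefficient still vanishes where it should and iterates. After $n$ steps this yields $f=\sum_{j\le d-1}(z_j-\lambda_j)g_j+\prod_i(z_d-\mu_i)^{m_i}g_d$ in $H^2_d$, and the last term is rewritten as $p\,g_d$ plus something in $\sum_{j\le d-1}(z_j-\lambda_j)\C[z]$ (here the fact that $p(\lambda,\cdot)$ has exactly the $\mu_i$ as roots is used). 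Projecting to $\cH_p$ finishes. The Gleason theorem is precisely the ``norm control'' you were hoping for, packaged as a citable result; once you know it, your route and the paper's converge, but as written your argument stops short of it.
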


\begin{proof}
If $f = \sum_{ j = 1 }^{ d - 1 } ( S_j - \lambda_j ) h_j $ for some $ h_1, \hdots, h_{ d - 1 } \in \mathcal{ H }_p $, then, clearly, $f \perp \cH_{p,\lambda}$, as desired. To prove the converse, we begin by showing that any $ f \in H^2_d $, such that every point $(\lambda, \mu_j)$ is a root of $f$ of order $m_j$, for $1 \leq j \leq r$. satisfies the following identity
\begin{equation} \label{factorization of f} f ( z ) = \sum_{ j = 1 }^{ d - 1 } ( z_j - \lambda_j ) g_j + \prod_{ i = 1 }^r ( z_d - \mu_i )^{m_i} g_d \end{equation}
for some $ g_1, \hdots, g_d \in H^2_d $. We prove this with the help of mathematical induction on $ n = \deg p = \sum_{i=1}^r m_i $. To this end, we write our roots as $\mu_1,\ldots, \mu_n$, repeating them with appropriate multiplicity. Since $ f ( \lambda, \mu_1 ) = 0 $, it follows from the solution to the Gleason problem in $ H^2_d $ \cite[Section 3]{AlpKapt-gleason}, that there exists $ f_1^1, \hdots, f_d^1 \in H^2_d $ such that
$$ f ( z ) = \sum_{ i = 1 }^{ d - 1 } ( z_i - \lambda_i ) f_i^1 + ( z_d - \mu_1 ) f_d^1 $$
verifying the desired identity for $ n = 1 $. Now assume that the Equation \eqref{factorization of f} holds for $ k < n $ observe that we have following two cases -- either $ \mu_{ k + 1 } = \mu_k $ or $ \mu_{ k + 1 } \neq \mu_k $.

\begin{description}
\item[Case I] Let $ \mu_{ k + 1 } \neq \mu_k $. Since $ f ( \lambda, \mu_{ k + 1 } ) = 0 $ and $ \mu_i \neq \mu_{ k + 1 } $ for $ 1 \leq i \leq k $, it follows from induction hypothesis that $ g_d ( \lambda, \mu_{ k + 1 } ) = 0 $. Consequently, Gleason property of $ H^2_d $ yields that there exist $ g_j^1, \hdots, g_d^1 $ such that
$$ g_d ( z ) = \sum_{ j = 1 }^{ d - 1 } ( z_j - \lambda_j ) g_j^1 + ( z_d - \mu_{ k + 1 } ) g_d^1 . $$ Now substituting $ g_d $ in the  Equation \eqref{factorization of f} with $ k $ in place of $ n $, we have that 
$$ f ( z ) = \sum_{ j = 1 }^{ d - 1 } ( z_j - \lambda_j ) \left( g_j +  \prod_{ i = 1 }^k ( z_d - \mu_i ) g_j^1 \right)  + \prod_{ i = 1 }^{ k + 1 } ( z_d - \mu_i ) g^1_d  $$
verifying the induction step.

\item[Case II] Let $ \mu_{ k + 1 } = \mu_k $ and $ t $ be the cardinality of the set $ \{ i : \mu_{ k + 1 } = \mu_i, i = 1, \hdots, k \} $. Note that $ \frac{ \partial^t f}{ \partial z_d^t } ( \lambda, \mu_{ k + 1 } ) = 0 $. Consequently, differentiating the equation 
$$ f ( z ) = \sum_{ j = 1 }^{ d - 1 } ( z_j - \lambda_j ) g_j + \prod_{ i = 1 }^k ( z_d - \mu_i ) g_d  $$
with respect to $ z_d $ of order $ t $ and evaluating at $ ( \lambda, \mu_{ k +1 } ) $ we have that  $ g_d ( \lambda, \mu_{ k + 1 } ) = 0 $. Then, the induction step, in this case, follows from a similar argument as in Case I. 
\end{description}

Now observe that 
\begin{eqnarray*}
f ( z ) & = &  \sum_{ j = 1 }^{ d - 1 } ( z_j - \lambda_j ) g_j + \prod_{ i = 1 }^n ( z_d - \mu_i ) g_d \\
%& = & \sum_{ j = 1 }^{ d - 1 } ( z_j - \lambda_j ) g_j + \prod_{ i = 1 }^n ( z_d - \mu_i ) g_d - p g_d +p g_d \\
& = & \sum_{ j = 1 }^{ d - 1 } ( z_j - \lambda_j ) g_j + \left(\prod_{ i = 1 }^n ( z_d - \mu_i) - p \right) g_d +p g_d \\
& = & \sum_{ j = 1 }^{ d - 1 } ( z_j - \lambda_j ) \widetilde{g}_j + p g_d. 
\end{eqnarray*}
Where the last equality holds since $ p $ is a monic polynomial of degree $ n $ and $ \mu_1, \hdots, \mu_r $ are the roots of $ p ( \lambda, \cdot ) $. Thus, $\prod_{ i = 1 }^n ( z_d - \mu_i) - p$ belongs to the ideal in the polynomial ring generated by $z_1-\lambda_1,\ldots,z_{d-1} - \lambda_{d-1}$. Furthermore, since $\cH_p$ is multiplier coinvariant, we have that $P_{ \mathcal{ H }_p } M_{z_j} P_{ \mathcal{ H }_p } = P_{ \mathcal{ H }_p } M_{z_j}$, for all $1 \leq j \leq d$. Therefore,
$$ f = P_{ \mathcal{ H }_p } ( f ) = \sum_{ j = 1 }^{ d -1 } ( S_j - \lambda_j ) P_{ \mathcal{ H }_p } ( g_j ) $$
completing the proof.
\end{proof}

\begin{thm} \label{thm:CD}
The tuple $S^{\prime *}$ is in the Cowen-Douglas class.
\end{thm}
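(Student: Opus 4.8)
The plan is to verify the three defining conditions of the Cowen-Douglas class $B_n(U)$ (in the sense of Curto and Salinas) for the commuting tuple $S^{\prime *} = (S_1^*,\ldots,S_{d-1}^*)$ on $\cH_p$. Writing $D_\lambda \colon \cH_p \to \cH_p^{\,d-1}$ for the operator $x \mapsto \big((S_1^*-\bar\lambda_1)x,\ldots,(S_{d-1}^*-\bar\lambda_{d-1})x\big)$, the conditions are: (a) $\dim\ker D_\lambda = n$ for every $\lambda \in U$; (b) $\bigvee_{\lambda\in U}\ker D_\lambda = \cH_p$; and (c) $\operatorname{ran} D_\lambda$ is closed for every $\lambda \in U$. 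Two of these require no new work. Condition (a) is immediate, since $\ker D_\lambda = \bigcap_{j=1}^{d-1}\ker(S_j^*-\bar\lambda_j I) = \cH_{p,\lambda}$, which has dimension $n$ by Lemma \ref{lem:fibers}. Condition (b) is the spanning statement recorded just before the preceding lemma: a vector $f \in \cH_p$ orthogonal to every $\cH_{p,\lambda}$ vanishes on a neighbourhood of the origin in $V(p)$, hence on all of $V(p)$ by analyticity, so $f=0$. (That $S^{\prime *}$ is a commuting tuple follows from the multiplier coinvariance of $\cH_p$, which also identifies $S_j^*$ with $M_{z_j}^*|_{\cH_p}$.)

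The substantive point is (c). Here I would pass to adjoints: a bounded Hilbert space operator has closed range exactly when its adjoint does, and, using $(S_j^*)^* = S_j$ together with the coinvariance of $\cH_p$,
\[
D_\lambda^*(h_1,\ldots,h_{d-1}) \;=\; \sum_{j=1}^{d-1}(S_j-\lambda_j)h_j .
\]
So it suffices to show that $\mathcal{R}_\lambda := \big\{\,\sum_{j=1}^{d-1}(S_j-\lambda_j)h_j : h_1,\ldots,h_{d-1}\in\cH_p\,\big\}$ is a closed subspace of $\cH_p$, which is precisely the output of the preceding lemma. Indeed, if $\mu_1,\ldots,\mu_r$ are the roots of $p(\lambda,\cdot)$ with multiplicities $m_1,\ldots,m_r$, that lemma identifies $\mathcal{R}_\lambda$ with the set of $f\in\cH_p$ for which each $(\lambda,\mu_j)$ is a root of $f$ of order $m_j$; by the reproducing property (Lemma \ref{lem:derivatives}) this is the set of $f\in\cH_p$ orthogonal to $\{k^{(\ell e_d)}_{(\lambda,\mu_j)} : 1\le j\le r,\ 0\le\ell\le m_j-1\}$, which by Lemma \ref{lem:fibers} equals $\cH_p\ominus\cH_{p,\lambda}$, a closed subspace of codimension $n$. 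Hence $\operatorname{ran} D_\lambda$ is closed, and therefore $S^{\prime *}\in B_n(U)$.

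I do not anticipate a real obstacle: the analytic content has already been isolated in the preceding lemma and in Lemma \ref{lem:fibers}, so proving the theorem is largely a matter of recognizing that those statements are exactly conditions (a)--(c). The one step worth spelling out is the reduction in (c): closed range of $D_\lambda$ is equivalent, via the adjoint, to closed range of the map $(h_1,\ldots,h_{d-1})\mapsto\sum_{j=1}^{d-1}(S_j-\lambda_j)h_j$; once that is in place, the preceding lemma identifies this range with $\cH_p\ominus\cH_{p,\lambda}$ and finishes the argument.
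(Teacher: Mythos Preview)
Your proposal is correct and follows essentially the same route as the paper: conditions (a) and (b) are exactly the observations recorded in the paragraph preceding the lemma, and for (c) the paper invokes the preceding lemma together with \cite[Theorem 2, pp.\ 756]{ChenDouglas}, whereas you spell out directly that the range of $D_\lambda^*$ equals $\cH_p\ominus\cH_{p,\lambda}$ and is therefore closed. The arguments are the same in substance; you have simply unpacked the Chen--Douglas citation.
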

\begin{proof}
The result follows from the Lemma above and \cite[Theorem 2, pp 756]{ChenDouglas}.
\end{proof}

\begin{example}
Let us consider the quotient Hilbert space space $ \mathcal{ H }_p $ corresponding to the polynomial $ p ( z_1, z_2 ) = ( z_1 - \lambda z_2 ) ( z_1 - \mu z_2 ) $ as in the theorem above. Recall from Theorem \ref{thm:CD} that the operator $ S_2^* $ is in the Cowen-Douglas $ \mathrm B_2 ( U ) $ over the open disc $ U \subset \{ 0 \} \times \mathbb{ D } $ as specified in Lemma \ref{lem:bundle_neighborhood}. Let $ E_{ S_2 } \rightarrow U $ be the hermitian holomorphic vector bundle associated with $ S^*_2 $. This is a vector bundle of rank $ 2 $ with $ \left\{ \varphi_1(z) =  k_{ ( \lambda z, z ) }, \varphi_2(z) = \frac{ k_{ ( \lambda z, z ) } - k_{ (\mu z, z) } }{ \bar{ z } } \right\} $ as a holomorphic frame over $ ( 0, z ) \in U \setminus\{0\}$. One can show, as in the proof of Lemma \ref{lem:derivatives}, that the frame can be extended to $0$ by assigning $\varphi_1(0) = 1$ and $\varphi_2(0) = \overline{(\lambda - \mu)} z_1$.  The hermitian metric with respect to this frame then turns out to be 
\begin{equation} \label{hermitian metric}  
H ( z ) = \begin{pmatrix}
    \frac{ 1 }{ ( 1 - a | z |^2 ) } & \frac{ \lambda ( \overline{ \lambda - \mu } ) \overline{ z } }{ ( 1 - a | z |^2 )( 1 - c | z |^2 ) } \\
    \frac{ \overline{ \lambda } ( \lambda - \mu  ) z }{ ( 1 - a | z |^2 )( 1 - \overline{ c } | z |^2 ) } & \frac{ | \lambda - \mu |^2 ( 1 - ( 1 - | \lambda| | \mu | ) | z |^2 )( 1 - ( 1 + | \lambda | | \mu | ) | z |^2 ) }{ ( 1 - a | z |^2 ) ( 1 - b | z |^2 ) | 1 - c | z |^2 |^2 }
\end{pmatrix} 
\end{equation}
Here, $ a = 1 + | \lambda |^2 $, $ b = 1 + | \mu |^2 $ and $ c = 1 + \lambda \overline{ \mu } $. Consequently, the hermitian metric $ h $ of the determinant bundle $ \bigwedge^2 E_{ S_2 } \rightarrow U $ corresponding to the vector bundle $ E_{ S_2 } \rightarrow U $ is by definition
$$ h ( z ) = \det H ( z ) = \frac{ | \lambda - \mu |^2 ( 1 - | z |^2 ) }{ ( 1 - a | z |^2 ) ( 1 - b | z |^2 ) | 1 - c | z |^2 |^2 }, ~~~ z \in U . $$
The advantage of the determinant bundle is that it is a line bundle, and hence, the curvature of this bundle is a complete invariant. We now compute the curvature $ \mathcal{ K }_{ \bigwedge^2 E_{ S_2 } } $ of this line bundle at $ z = 0 $, as follows:
$$ \mathcal{ K }_{ \bigwedge^2 E_{ S_2 } } ( 0 ) = \partial \overline{ \partial } \log ( h ( z ) )|_{ z = 0 } = 3 + | \lambda + \mu |^2 . $$
\end{example}

In \cite{LuYangZu}, the authors study the irreducibility of compressed shifts on the bidisc. Here, we treat a special case of irreducibility when $d = n = 2$. The lemma below, modelled after Lemma 5.1 in \cite{KorMisra}, can be proved precisely in the same way as in the original proof, so it is omitted.

\begin{lem} \label{normalized kernel}
Suppose that the adjoint of the multiplication operators $M_z$ on a reproducing kernel Hilbert space $\cH$ with the reproducing kernel $K$ is in $\mathrm B_r(\Omega)$ for some open connected domain $\Omega \subset \C $. If there exists an orthogonal projection $X$ commuting with the operator $M_z$, then 
$$\Phi_X(z)K(z,w)~=~K(z,w)\Phi_X(w)^{*}$$ for some holomorphic function $\Phi_X:\Omega \rightarrow \C^{r\times r}$ with $\Phi_X^2=\Phi_X$.
\end{lem}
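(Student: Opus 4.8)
The plan is to run the standard Cowen--Douglas argument, in three short steps. First, since $X=X^{*}$ and $XM_{z}=M_{z}X$, taking adjoints gives $XM_{z}^{*}=M_{z}^{*}X$; hence for every $w\in\Omega$ the eigenspace $\ker(M_{z}^{*}-w)$, which has constant dimension $r$ because $M_{z}^{*}\in\mathrm B_{r}(\Omega)$, is invariant under $X$. Fix a holomorphic frame $\varphi_{1},\dots,\varphi_{r}$ of the eigenbundle $w\mapsto\ker(M_{z}^{*}-w)$ over $\Omega$, so that each $\varphi_{j}(w)$ depends holomorphically on $w$ and the reproducing kernel of $\cH$ is, under the natural identification, $K(z,w)=\big(\langle\varphi_{i}(z),\varphi_{j}(w)\rangle\big)_{i,j=1}^{r}$. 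Since $X$ is bounded and $\varphi_{i}$ is holomorphic, $w\mapsto X\varphi_{i}(w)$ is a holomorphic $\cH$-valued map that, by the invariance just noted, lies in $\ker(M_{z}^{*}-w)$; writing $X\varphi_{i}(w)=\sum_{k}\Phi_{X}(w)_{ik}\,\varphi_{k}(w)$, the standard fact that the coordinates of a holomorphic section with respect to a holomorphic frame are holomorphic shows that $\Phi_{X}\colon\Omega\to\C^{r\times r}$ is holomorphic. Thus $\Phi_{X}(w)$ is simply the matrix describing the action of $X$ on the fibre $\ker(M_{z}^{*}-w)$.

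The two assertions are then formal. Applying $X^{2}=X$ to $\varphi_{i}(w)$ and using that $\{\varphi_{k}(w)\}_{k}$ is a basis of $\ker(M_{z}^{*}-w)$ gives $\Phi_{X}(w)^{2}=\Phi_{X}(w)$ for every $w\in\Omega$. For the kernel identity one computes $\langle X\varphi_{i}(z),\varphi_{j}(w)\rangle$ in two ways: expanding $X\varphi_{i}(z)$ in the frame yields $\sum_{k}\Phi_{X}(z)_{ik}K(z,w)_{kj}=\big(\Phi_{X}(z)K(z,w)\big)_{ij}$, while using $X=X^{*}$ and expanding $X\varphi_{j}(w)$ yields $\sum_{k}\overline{\Phi_{X}(w)_{jk}}\,K(z,w)_{ik}=\big(K(z,w)\Phi_{X}(w)^{*}\big)_{ij}$. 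Hence $\Phi_{X}(z)K(z,w)=K(z,w)\Phi_{X}(w)^{*}$ for all $z,w\in\Omega$, which is the claim.

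I do not expect a genuine obstacle here — this is why the authors can omit the proof — but the one step that requires care is the bookkeeping at the start: identifying the abstract pair $(\cH,M_{z})$ with the concrete reproducing-kernel model of an operator whose adjoint lies in $\mathrm B_{r}(\Omega)$, deciding which Gram-type matrix of the chosen holomorphic frame is to be called ``the reproducing kernel $K$'', and keeping the conjugations consistent, since this is exactly what decides whether the final identity comes out as stated or with $\Phi_{X}$ and $\Phi_{X}^{*}$ interchanged. Connectedness of $\Omega$ is not essential to this particular statement; once the model and frame are fixed, the computation is the one carried out in \cite[Lemma 5.1]{KorMisra}.
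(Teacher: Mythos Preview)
Your proposal is correct and is exactly the argument the paper has in mind: the paper omits the proof and says it ``can be proved precisely in the same way as'' \cite[Lemma~5.1]{KorMisra}, which is the reference you invoke and the computation you carry out. Your caveat about conventions (which variable is holomorphic, which Gram matrix is called $K$) is well placed but does not affect correctness.
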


In our set-up for $ p ( z_1, z_2 ) = ( z_1 - \lambda z_2 ) ( z_1 - \mu z_2 ) $, recall that $ S_2^* $ on $ \mathcal{ H }_p $ is in $ \mathrm B_2 ( U ) $. Consequently, it is well known \cite{CowenDouglas=CGOT, CurtoSalinas} that $ S_2 $ can be modelled as the multiplication operator by coordinate functions on a reproducing kernel Hilbert space of holomorphic functions on $ U $ (see Remark \ref{choice of U}). One such reproducing kernel $ K : U \times U \rightarrow \text{M}_2 ( \C ) $ can be defined as
\begin{equation} \label{rk for S_2}
K ( z, w ) : = \begin{pmatrix}
    \frac{ 1 }{ ( 1 - a z \overline{w} ) } & \frac{ \lambda ( \overline{ \lambda - \mu } ) \overline{ w } }{ ( 1 - a z \overline{ w } )( 1 - c z \overline{ w } ) } \\
    \frac{ \overline{ \lambda } ( \lambda - \mu  ) z }{ ( 1 - a z \overline{ w } )( 1 - \overline{ c } z \overline{ w } ) } & \frac{ | \lambda - \mu |^2 ( 1 - ( 1 - | \lambda| | \mu | ) z \overline{ w } )( 1 - ( 1 + | \lambda | | \mu | ) z \overline{ w } ) }{ ( 1 - a z \overline{ w } ) ( 1 - b z \overline{ w } ) ( 1 - c z ) ( 1 - \overline{ c } \overline{ w } ) }
\end{pmatrix}
\end{equation}
Thus, $S_2^*$ on $\cH_p$ is irreducible if and only if there is no non-trivial projection $X_0$ on $\C^2$ satisfying the following equation.
$$
X_0K_0(z,0)^{-1}K_0(z,w)K_0(0,w)^{-1}~=~K_0(z,0)^{-1}K_0(z,w)K_0(0,w)^{-1}X_0, ~~~ z, w \in U. 
$$
Here, $K_0(z,w)=K(0,0)^{-\frac{1}{2}}K(z,w)K(0,0)^{-\frac{1}{2}}$ and $K$ is the reproducing kernel described in the equation \eqref{rk for S_2}. Let $\hat{K}_0(z,w)$ -- called the normalized kernel -- denote the kernel $K_0(z,0)^{-1}K_0(z,w)K_0(0,w)^{-1}$ on $ U $.

 \begin{thm}
 Let $ \lambda, \mu \in \C $ and $ p $ be the polynomial $ p ( z_1, z_2 ) = ( z_1 - \lambda z_2 ) ( z_1 - \mu z_2 ) $. Then the operator $ S_2^* $ on $ \cH_p $ is irreducible.
 \end{thm}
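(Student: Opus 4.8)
The plan is to apply the Cowen--Douglas irreducibility criterion through the normalized kernel, exactly as set up immediately before the statement. By Lemma~\ref{normalized kernel} and the reduction following it, $S_2^*$ on $\cH_p$ is irreducible if and only if the only orthogonal projections $X_0\in M_2(\C)$ with
\[
X_0\,\hat K_0(z,w)\;=\;\hat K_0(z,w)\,X_0\qquad\text{for all }z,w\in U
\]
are $X_0=0$ and $X_0=I_2$, where $\hat K_0(z,w)=K_0(z,0)^{-1}K_0(z,w)K_0(0,w)^{-1}$ is the normalized kernel built from $K$ in~\eqref{rk for S_2}. Since $\hat K_0(z,0)=\hat K_0(0,w)=I_2$, one has a power series expansion $\hat K_0(z,w)=I_2+\sum_{i,j\ge 1}d_{ij}\,z^i\bar w^j$ with $d_{ij}\in M_2(\C)$ and $d_{ji}=d_{ij}^*$, and the displayed relation holds for a given $X_0$ precisely when $X_0$ commutes with every coefficient matrix $d_{ij}$. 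Hence it is enough to exhibit finitely many of the $d_{ij}$ whose joint commutant in $M_2(\C)$ is already $\C I_2$.

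Accordingly, the first task is to compute $\hat K_0$ explicitly from~\eqref{rk for S_2}. Since $K_0(z,0)$ is lower triangular and $K_0(0,w)$ upper triangular, each with unit diagonal, the passage from $K_0$ to $\hat K_0$ is a unipotent conjugation and all entries of $\hat K_0$ come out in closed form. Reading off the leading coefficients one finds, first, that the $z\bar w$-coefficient $d_{11}$ (the curvature of the bundle $E_{S_2}$ at the origin, written in the normalized frame) is \emph{diagonal}, so $d_{11}$ alone cannot force $X_0$ to be scalar, and one must look one order further. The next coefficients are $d_{12}$ (the $z\bar w^{2}$-term) and $d_{21}=d_{12}^*$ (the $z^{2}\bar w$-term); the computation shows that the off-diagonal entry of $d_{12}$ is a nonzero scalar multiple of $\lambda\,\overline{(\lambda-\mu)}\,(1+\lambda\overline{\mu})$, so in particular $d_{12}$ is non-normal whenever that quantity does not vanish.

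When $d_{12}$ is non-normal, $d_{12}$ together with $d_{21}=d_{12}^*$ already generates $M_2(\C)$ as a unital $*$-algebra: the two Hermitian matrices $\Re d_{12}$ and $\Im d_{12}$ do not commute, and two non-commuting Hermitian $2\times 2$ matrices generate $M_2(\C)$. Hence the joint commutant of $\{d_{ij}\}$ is $\C I_2$, every orthogonal projection commuting with all the $d_{ij}$ is $0$ or $I_2$, and $S_2^*$ is irreducible. I expect the main obstacle to be twofold. One part is simply carrying out the (rather unpleasant) computation of $\hat K_0$ from the explicit kernel and tracking the coefficients correctly. The other, more serious, part is the degenerate parameter values at which the argument stalls: $\lambda=0$ or $\mu=0$, which lie outside the standing normalization $p(0,\dots,0,1)\neq 0$ and can be reached, if at all, only after an auxiliary automorphism of $\B_2$; and $1+\lambda\overline{\mu}=0$, i.e.\ the lines $\C(\lambda,1)$ and $\C(\mu,1)$ orthogonal in $\C^2$, where $d_{12}=d_{21}=0$ and in fact $\hat K_0$ turns out to be diagonal. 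This last case must be treated separately --- by strengthening the hypotheses to $1+\lambda\overline{\mu}\neq 0$, or by a direct argument with the joint eigenvectors of $S_2^*$ --- and it is the delicate heart of the proof.
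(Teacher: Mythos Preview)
Your approach is the paper's: both invoke Lemma~\ref{normalized kernel} and reduce irreducibility to showing that only the trivial projections commute with the coefficient matrices of the normalized kernel $\hat K_0$. The paper's execution differs only cosmetically from yours. Rather than arguing directly that $d_{12}$ is non-normal, the paper proceeds in two steps: first, $\partial\bar\partial\hat K_0(0,0)$ (your $d_{11}$) is diagonal with distinct entries $1$ and $2+|\lambda+\mu|^2$, so any commuting self-adjoint projection $X$ must be diagonal; second, the paper writes the $(1,2)$-entry of $\hat K_0$ explicitly as
\[
\hat K_0(z,w)_{12}=\frac{\lambda\,|\lambda-\mu|\,(1+\lambda\bar\mu)\,z\bar w^{2}}{(\lambda-\mu)\bigl(1-(1+|\lambda|^{2})z\bar w\bigr)\bigl(1-(1+\lambda\bar\mu)z\bar w\bigr)},
\]
picks a point $z_0$ where this is nonzero, and concludes that the two diagonal entries of $X$ must coincide. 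Your shortcut---observing that $d_{12}$ is strictly upper triangular with nonzero $(1,2)$-entry, hence non-normal, so that $d_{12}$ and $d_{12}^*$ generate $M_2(\C)$---is an equivalent and slightly cleaner way to finish.

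Your caution about degenerate parameters is well placed. The cases $\lambda=0$ or $\mu=0$ are excluded by the section's standing normalization $p(e_d)\neq 0$, so they are out of scope. The case $1+\lambda\bar\mu=0$, however, is genuine: the displayed $(1,2)$-entry then vanishes identically, and the paper's proof simply asserts the existence of a point $z_0$ where it does not. So the gap you flag as ``the delicate heart of the proof'' is present in the paper's own argument as well; it is not treated there.
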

 
 \begin{proof}
 Since $\hat{K}_0$ is equivalent to the reproducing kernel $K$ defined in the equation \eqref{rk for S_2}, it is enough to show that the multiplication operator on the reproducing kernel Hilbert space corresponding to the kernel $\hat{K}_0$ by the coordinate function is irreducible. In view of Lemma \ref{normalized kernel} above, it amounts to show that $I_{2 \times 2}$ is the only self-adjoint projection matrix commuting with $\hat{K}_0(z, w)$ for all $z, w \in U $ where $I_{2 \times 2 }$ is the identity matrix of size $2$. So let us begin with a $ 2 \times 2 $ self-adjoint projection matrix $X$ satisfying
 $$ X \hat{K}_0 ( z, w ) = \hat{K}_0(z, w ) X, ~~~ \text{for all} ~~ z,w \in U . $$
In particular, $X$ commutes with the matrix $\partial \overline{\partial}\hat{K}_0 (0,0)$ which turns out to be a diagonal matrix with diagonal entries $1$ and $2+|\lambda +\mu|^2$. Consequently, $X$ has to be a diagonal matrix. Moreover, since the $12$-th entry of $ \hat{K}_0 $ turns out to be 
$$ \hat{K}_0 ( z, w )_{ 12 } = \frac{ \lambda | \lambda - \mu | ( 1 + \lambda \overline{\mu}) z \overline{w}^2 }{ ( \lambda - \mu ) ( 1 - ( 1 + |\lambda|^2) z \overline{w})( 1 - ( 1 + \lambda \overline{ \mu } ) z\overline{w} )}, $$ there exists $z_0 \in U $ such that the $12$-th entry of $\hat{K}_0(z_0,z_0)$ is not zero. However, this implies that the diagonal entries of $X$ have to be equal as $X$ also commutes with $\hat{K}_0(z_0,z_0)$. Finally, since $X$ is a projection, $X$ has to be $ I_{2 \times 2}$.
\end{proof}

\section{Stratification of $\pick_n$} \label{sec:stratification}

Let $G = \Aut(\B_d)$. Recall that $G$ acts on $\bP_{d,n}$ by conjugation. Moreover, the bundle $\cE_{d,n}$ is equivariant with respect to the natural action on the total space of the bundle. We will identify $\bP_{d,n}^0$ with $\bX_{d,n}^0$ and consider $\cE_{d,n}$ as bundle on $\bX_{d,n}^0$. There is a natural stratification on $\bX_{d,n}$ given by
\[
\bX_{d,n}^0(m) = \left\{ [X] \in \bX_{d,n}^0 \mid \affspan(X) \leq m \right\}, \text{ for } 1 \leq m \leq \min\{d, n-1\}.
\]
For every $[X] \in \bX_{d,n}^0$ and every representative $X = \{\lambda_1,\ldots,\lambda_n\}$, we can consider the matrix with columns $C_X = \lambda_1 - \lambda_n, \ldots, \lambda_{n-1} - \lambda_n$. We note that $[X] \in \bX_{d,n}^0(m)$ if and only if the minors of size $m+1$ of $C_X$ all vanish. Therefore, $\bX_{d,n}^0 = \bX_{d,n}^0(\min\{d, n-1\})$ and for $m < \min\{d, n-1\}$, the sets $\bX_{d,n}^0(m)$ are closed. By \cite[Proposition 2.4.1]{Rudin-book}, the automorphisms of $\B_d$ preserve affine subsets of $\B_d$. Hence, the sets $\bX_{d,n}^0(m)$ are $G$-invariant. In particular, if we identify $\pick_n$ with $\bX_{n-1,n}^0/G$, we obtain a stratification of $\pick_n$ by the sets $\pick_n(m)$, which are the images of $\bX_{n-1,n}^0(m)$ under the quotient map.

There is another way to describe $\pick_n(m)$. Let $\cH$ be a complete Pick space on the set $X = \{x_1,\ldots,x_n\}$. We may assume that the kernel $k$ of $\cH$ is normalized at $x_1$. This implies that the function $g(x_i,x_j) = 1 - 1/k(x_i,x_j)$ is a positive kernel. We will call the dimension of the corresponding reproducing kernel Hilbert space the embedding dimension of $\cH$ and denote it by $\edim(\cH)$. This is due to the fact that a Kolmogorov decomposition of $g$ gives an embedding of $X$ into $\B_{\edim(\cH)}$. It is clear that the embedding dimension is an invariant of the complete Pick space. We get that
\[
\pick_{n}(m) = \left\{ \cH \in \pick_n \mid \edim(\cH) \leq m \right\}.
\]
Indeed, one inclusion is clear since if we have $[X] \in \bX_{n-1,n}^0(m)$ for $m < n-1$, we move $[X]$ by an automorphism so that one of the points is the origin. We then note that $[X]$ is contained in a subspace, and the intersection of this subspace with $\B_{n-1}$ is a unit ball of dimension at most $m$. Hence, the image of $[X]$ in $\pick_n$ lies in $\pick_n(m)$. Conversely, for $\cH$ as above with $\edim(\cH) \leq m$, we take the map $b \colon X \to \B_d$, for $d \leq m$ and compose it with the embedding of $\B_d$ in $\B_{n-1}$ as the intersection with a coordinate plane. This shows that equivalence class of $\cH$ in $\pick_n$ lies in the image of $\bX_{n-1,n}^0(m)$.

Also, the argument above shows that for points $[X] \in \bX_{d,n}^0(m)$, we have a non-trivial stabilizer. In fact, the stabilizer is a compact infinite subgroup of $G$. First, consider the open dense submanifold $\bX_{n-1,n}^0 \setminus \bX_{n-2,n-1}^0 $ of $\bX_{n-1,n}^0$. Observe that there are points in this submanifold with a non-trivial stabilizer. To see this, fix $ 0 < r < 1$ and consider the point $[X] = [0] + \sum_{j=1}^{n-1} r [e_j]$. Clearly, the standard representation of $S_{n-1}$ stabilizes this point. However, if $[X]$ is such that the pseudohyperbolic distances between every two points in $[X]$ are different, then since $G$ acts by isometries on $\B_d^{n,0}$, the stabilizer of $[X]$ is trivial. It is immediate from the definition of the topology on $\bX_{d,n}$ that the set of all such points is open and dense in $\bX_{d,n}^0$. Moreover, all those points are regular points in the sense of \cite[Definition 2.8.3]{DuisKolk} or \cite[Definition on page 12]{Koszul-book} (see also \cite[Theorem 2]{Koszul-book}). Let us denote by $\bX_{n-1,n}^{0,reg}$ the collection of all regular points and by $\pick_n^{reg}$ its image in $\pick_n$. By \cite[Theorem 4]{Koszul-book}, $\pick_n^{reg}$ is connected and every point in $\bX_{n-1,n}^{0,reg}$ has trivial stabilizer. We summarize this discussion and its consequences in the following theorem.

\begin{thm} \label{thm:reg_pick_spaces}
The space $\pick_n^{reg}$ has a structure of a smooth manifold and $\bX_{n-1,n}^{0,reg}$ is the total space of a principal $G$-bundle over $\pick_n^{reg}$. Moreover, the vector bundle $\cE_{n-1,n}|_{\bX_{n-1,n}^{0,reg}}$ gives rise to a vector bundle $\cE_n^{reg}$ on $\pick_n^{reg}$.
\end{thm}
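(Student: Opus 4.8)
The plan is to read the first two assertions as an instance of the quotient manifold theorem for free and proper Lie group actions, and the last assertion as descent of an equivariant vector bundle along the resulting principal bundle. Most of the substantive work has in fact already been carried out in the discussion preceding the statement, so what remains is largely bookkeeping.

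First I would record that $\bX_{n-1,n}^{0,reg}$ is a smooth manifold. The punctured polyball $\B_{n-1}^{n,0}$ is an open subset of $\C^{n(n-1)}$, hence a complex manifold, and $S_n$ acts on it by biholomorphisms and \emph{freely}, since a permutation fixing an $n$-tuple of pairwise distinct points is the identity. Therefore $\bX_{n-1,n}^0=\B_{n-1}^{n,0}/S_n$ is a complex manifold and the symmetrization map is a holomorphic covering. By the theory of proper actions already invoked above (cf.\ \cite{Koszul-book,DuisKolk}), the set of regular points $\bX_{n-1,n}^{0,reg}$ is open in $\bX_{n-1,n}^0$, hence is an open complex submanifold.

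Next, $G=\Aut(\B_{n-1})$ is a finite-dimensional Lie group (a quotient of $\mathrm{SU}(1,n-1)$ by a finite subgroup), and it acts smoothly on $\bX_{n-1,n}^{0,reg}$: the diagonal $G$-action on $\B_{n-1}^{n,0}$ is real-analytic by the explicit formulas for the involutions $\varphi_\mu$ and for unitaries, it commutes with the $S_n$-action, and so it descends to a smooth action on the quotient. We have already shown that this action is proper, and by construction every point of $\bX_{n-1,n}^{0,reg}$ has trivial stabilizer, so the action is also free. The quotient manifold theorem for free proper Lie group actions (see \cite{DuisKolk}) then endows $\pick_n^{reg}=\bX_{n-1,n}^{0,reg}/G$ with a unique smooth structure for which the canonical projection $q\colon\bX_{n-1,n}^{0,reg}\to\pick_n^{reg}$ is a smooth submersion; properness makes $\pick_n^{reg}$ Hausdorff, and $q$ is then a locally trivial principal $G$-bundle. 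This gives the first two assertions.

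For the last assertion, recall from the beginning of this section that $\cE_{n-1,n}$ is $G$-equivariant: the conjugation action $P\mapsto U_\varphi^*PU_\varphi$ lifts to the total space via the linear isomorphisms $U_\varphi^*\colon PH^2_{n-1}\to(\varphi\cdot P)H^2_{n-1}$. Restricting to $\bX_{n-1,n}^{0,reg}$ and using that $q$ is a principal bundle, I would cover $\pick_n^{reg}$ by open sets $V$ carrying local sections $s_V\colon V\to\bX_{n-1,n}^{0,reg}$ of $q$, put $\cE_n^{reg}|_V:=s_V^*\cE_{n-1,n}$, and transport along the unique group element sending $s_V$ to $s_{V'}$ to glue these over overlaps; the cocycle condition is immediate from the equivariant structure. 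This produces a vector bundle $\cE_n^{reg}$ on $\pick_n^{reg}$ with $q^*\cE_n^{reg}\cong\cE_{n-1,n}|_{\bX_{n-1,n}^{0,reg}}$, equivalently describable as the quotient $(\cE_{n-1,n}|_{\bX_{n-1,n}^{0,reg}})/G$. I do not anticipate a genuine obstacle: the nontrivial inputs — properness and freeness of the $G$-action, and the structure of the regular locus — are already established, and the only point needing a word of care is that $q$ be a \emph{smooth} (not merely topological) principal bundle so that local sections exist, which is precisely the content of the quotient manifold theorem.
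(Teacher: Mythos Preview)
Your proposal is correct and follows essentially the same strategy as the paper. The paper's own proof is a two-line appeal to \cite[Proposition 5.2]{tomDieck-book} for the quotient manifold and principal bundle structure, and to \cite[Proposition 9.4]{tomDieck-book} for the descent of the equivariant vector bundle; you have simply unpacked these citations, supplying in particular the smooth structure on $\bX_{n-1,n}^{0}$ via the free $S_n$-action on $\B_{n-1}^{n,0}$ and an explicit local-section construction of $\cE_n^{reg}$, neither of which the paper spells out.
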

\begin{proof}
The first part follows from \cite[Proposition 5.2]{tomDieck-book}. The second follows from \cite[Proposition 9.4]{tomDieck-book}
\end{proof}
We note that the stratification above is part of the orbit type stratification on $\bX_{d,n}^0$.

\bibliographystyle{abbrv}
\bibliography{bibliography}
\end{document}